\documentclass[11pt]{article}

\usepackage{fullpage}
\usepackage{amsmath, amsfonts, amssymb, amsthm, amsxtra}
\usepackage{dsfont}
\usepackage{mathrsfs, mathtools}
\usepackage{bbm}
\usepackage{bm}
\usepackage[shortlabels]{enumitem}
\usepackage{natbib}
\usepackage[dvipsnames]{xcolor}
\usepackage[hidelinks]{hyperref}
\usepackage{comment}

\numberwithin{equation}{section}

\newtheorem{theorem}{Theorem}[section]
\newtheorem{definition}[theorem]{Definition}
\newtheorem{lemma}[theorem]{Lemma}
\newtheorem{corollary}[theorem]{Corollary}

\newtheorem{remark}[theorem]{Remark}
\newtheorem{proposition}[theorem]{Proposition}

\newtheorem{fact}[theorem]{Fact}

\newcommand{\bX}{\bm{X}}

\newcommand{\cE}{\mathcal{E}}

\newcommand{\cN}{\mathcal{N}}

\newcommand{\CC}{\mathbb{C}}

\newcommand{\EE}{\mathbb{E}}

\newcommand{\PP}{\mathbb{P}}
\newcommand{\QQ}{\mathbb{Q}}
\newcommand{\RR}{\mathbb{R}}
\def\abs#1{\left| #1 \right|}

\newcommand{\inparen}[1]{\left(#1\right)}             %\inparen{x+y}  is (x+y)
\newcommand{\inbraces}[1]{\left\{#1\right\}}           %\inbrace{x+y}  is {x+y}
\newcommand{\insquare}[1]{\left[#1\right]}             %\insquare{x+y}  is [x+y]
\newcommand{\E}{\mathbb{E}}

\renewcommand{\P}{\mathbb{P}}
\newcommand{\eps}{\varepsilon}
\newcommand{\ud}{\textnormal{d}}
\newcommand{\1}{\mathbf{1}}
\newcommand{\Bern}{\operatorname{\mathsf{Bernoulli}}}
\newcommand{\Bin}{\operatorname{\mathsf{Bin}}}

\newcommand{\Hyp}{\operatorname{\mathsf{Hypergeom}}}

\newcommand{\ER}{Erd\H{o}s--R\'enyi}

\newcommand{\Var}{\operatorname{\mathsf{Var}}}
\newcommand{\TV}{\operatorname{\mathsf{TV}}}
\newcommand{\PSCenterVertex}{\ensuremath{v_*}}

\title{Optimal Detection of Planted Stars via a \\ Random Energy Model}
\author{
Ijay Narang\thanks{School of Computer Science, Georgia Institute of Technology, inarang3@gatech.edu}
\and
Will Perkins\thanks{School of Computer Science, Georgia Institute of Technology, wperkins3@gatech.edu}
\and 
Timothy L.~H.~Wee\thanks{School of Mathematics, Georgia Institute of Technology, timothy.wee@gatech.edu}}

\date{\today}

\begin{document}

\maketitle

\begin{abstract}
We study the problem of detecting a planted star in the Erd{\H{o}}s--R{\'e}nyi random graph $G(n,m)$, formulated as a hypothesis test. We determine the scaling window for critical detection in $m$ in terms of the star size, and characterize the asymptotic total variation distance between the null and alternative hypotheses in this window. In the course of the proofs we show a condensation phase transition in the likelihood ratio that closely resembles that of the random energy model from spin glass theory. 
\end{abstract}

\bigskip
\noindent
\textbf{Keywords:} hypothesis testing; planted subgraph; random energy model; spin glass

\tableofcontents

\section{Introduction}

Extracting information from networks is a fundamental task that spans many fields including computer science, statistics, and statistical physics. Extensive work has been done to better understand thresholds and algorithms for recovery and detection of planted structures in random graphs. Notable examples include the planted clique problem \cite{jerrum1992metropolis, AlonKrivelevichSudakov1998HiddenClique} for which many complexity-theoretic questions remain, as well as the community detection or stochastic block models \cite{decelle2011asymptotic,bhaskara2010detecting,arias2014community,mossel2015reconstruction,abbe2018community}. In recent years, planted combinatorial motifs, often with no intrinsic low rank structure, have received significant attention. These include planted Hamiltonian cycles \citep{bagaria2020hidden}, hidden nearest-neighbor graphs \citep{ding2020consistent}, planted matchings and $k$-factors \citep{moharrami2021planted,ding2023planted,gaudio2025all,wee2025cluster, addarioberry2026statisticalthresholdplantedmatchings}, and planted trees \citep{massoulie2019planting,moharrami2025planted}.

In network science, it is often of interest to identify central nodes, or hubs, which have an outsized influence on the rest of the graph. This has many applications, ranging from page ranking in search engines, to super-spreaders in epidemiological networks \cite{Kirkley2024Identifying}. Such questions have been investigated in the context of preferential or uniform attachment networks, in the form of identifying early vertices or seed graphs \citep{bubeck2017finding,RaczBubeck2017Survey, addarioberry2024optimalrootrecoveryuniform}.

This work studies such a hub detection question in the context of a planted subgraph problem: Consider a planted star of size $k$ in a graph on $n$ vertices. How many edges $m = m(n,k)$ can we throw randomly on the graph before the planted star gets buried?

A star is a tree, and thus variants of the planted star problem have been studied in \cite{massoulie2019planting} and \cite{moharrami2025planted}. The former operates in a sparse setting in the \ER~$G(n,p)$, and the latter considers spanning weighted trees. This paper instead considers planted stars of diverging size in an \ER~$G(n,m)$ setting, where the number of edges in the planted and null models coincide exactly. Aside from the clear differences between our settings, the focus of this paper is on the \emph{precise} detection behavior at the critical threshold. In this regard, much less is known, even in the planted subgraph literature in general.

\subsection{Summary of contributions}

\paragraph{Precise threshold fundamental limits of detection} We give an exact answer to the question above in the form of a critical scaling window $m = a_{n,m,k} + \gamma b_{n,m,k}$, where $\gamma$ is a constant. Outside this window, detection is shown to be either information-theoretically trivial or impossible, while within this window we characterize the total variation rate in terms of $\gamma$.
\paragraph{Likelihood ratio and REM}
We establish the asymptotics of the likelihood ratio between the planted and null distributions. Our analysis exposes a surprising connection between this likelihood ratio and the \emph{random energy model (REM)} from spin glass theory. Our approach builds upon the truncated second-moment methods used to analyze the classical REM. However, modifications are needed to address the significant dependencies between degrees of vertices that arise in the $G(n,m)$ model. These include showing that the moment generating functions of the degrees are relatively stable in the sense that conditioning on additional degree information induces only tiny perturbations; an appropriate use of higher order Bonferroni inequalities to control fluctuations of the likelihood ratio at the critical phase transition; and the use of strongly Rayleigh distributions and the geometry of polynomials to control the cumulants of the Hypergeometric distribution.

\paragraph{Simple optimal test and asymmetrical testing errors}  We show that the optimal likelihood ratio thresholding test reduces to a simpler test thresholding the maximum degree statistic. Our characterization of the likelihood ratio reveals that it is asymptotically degenerate under the null which leads to an asymmetry in the form of a vanishing Type-I and non-vanishing Type-II error. Finally, we also show the absence of a statistical-to-computational gap in this detection problem.

\subsection{Problem formulation}

Given positive integers $n$, $m$, and $k$, the \textbf{planted} distribution $\PP_1$ is the distribution over random graphs on $n$ vertices constructed as follows. First pick a hub vertex $\PSCenterVertex \in [n]$ uniformly at random and add $k$ uniformly randomly  incident edges to $\PSCenterVertex$. Then choose $m-k$ edges chosen uniformly over the remaining $N - k$ unplanted edges, where $N = \binom{n}{2}$. The \textbf{null} distribution $\PP_0$ is the \ER~random graph model $G(n,m)$: the uniform distribution over all graphs on $n$ vertices with $m$ edges.

Given a graph $G$, we consider the hypothesis test
\begin{equation}\label{eqMain_detection_problem}
H_0:\; G\sim \PP_0,
\qquad\text{vs.}\qquad
H_1:\; G\sim \PP_1.
\end{equation}

Throughout the paper, we restrict our attention to planted stars of size
\begin{align}\label{eqAsmpt_on_k}
    (\log n)^2 \ll k \ll \sqrt{n}.
\end{align}
The lower bound places us beyond the sparse regime. The upper restriction on $k$ follows from the standard deviation of degrees under both the null and alternative being upper bounded by $\sqrt{n}$.

\paragraph{Notation}
We use the standard $\omega(\cdot), o(\cdot), \Theta(\cdot), O(\cdot)$ notation. Write $a \sim b$ to mean $a/b \to 1$. We say that an event holds with high probability (whp) if it holds with probability $1 - o(1)$ as $n \to \infty$.
For any graph $G$, let $\Delta = \Delta(G)$ denote its maximum degree. The Gaussian cumulative distributive function is denoted by $\Phi(\cdot)$ and its complement is denoted $\Phi^c( \cdot)$.  

\subsection{Main results}

\begin{theorem}\label{thmMainResult_TV}
Suppose the star size $k$ satisfies \eqref{eqAsmpt_on_k}. For $\gamma \in \mathbb{R}$, set 
\begin{equation}\label{eqScalingWindowM}
    m = \frac{k^2n}{4\log n} \left (1 + \frac{\gamma}{\sqrt{\log n}} \right).
\end{equation}
Then, as $n \rightarrow \infty$,
\[
\TV(\PP_1, \PP_0)
= 
1 - \Phi \bigg(\frac{\gamma}{\sqrt{2}}\bigg) + o(1).
\]
\end{theorem}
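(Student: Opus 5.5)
The plan is to write the likelihood ratio explicitly and study it as a random energy model. Given $G$ with $m$ edges and degrees $d_v$, the planted law puts mass $\frac{1}{n}\binom{n-1}{k}^{-1}\binom{N-k}{m-k}^{-1}$ on each pair (hub $v$, star $S\subseteq N(v)$, $|S|=k$). Summing over choices,
\[
L(G)=\frac{\PP_1(G)}{\PP_0(G)}=\frac{1}{n}\sum_{v\in[n]}\frac{\binom{d_v}{k}\binom{N}{m}}{\binom{n-1}{k}\binom{N-k}{m-k}} \approx \frac1n\sum_{v}\frac{\binom{d_v}{k}}{\binom{n-1}{k}p^k},\qquad p=m/N .
\]
Writing $d_v=(n-1)p+x_v\sqrt{np(1-p)}$ and expanding $\log\binom{d_v}{k}$ (valid since $k\ll\sqrt n$), each summand is $\exp(\beta x_v-\beta^2/2+o(1))$ with $\beta=k\sqrt{(1-p)/(np)}\approx k\sqrt{n/(2m)}$. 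Under $\PP_0$ the $x_v$ are approximately standard Gaussian and weakly dependent (hypergeometric degrees), so $L$ looks like a REM partition function $\frac1n\sum_v e^{\beta x_v-\beta^2/2}$. The window \eqref{eqScalingWindowM} is exactly where $\beta^2\approx 2\log n\,(1-\gamma/\sqrt{\log n})$, i.e.\ $\beta$ sits at the REM critical value $\sqrt{2\log n}$ at the $O(1)$ scale of $\beta-\sqrt{2\log n}$.

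Since $\TV=\PP_1(L>1)-\PP_0(L>1)$, it suffices to show: (i) $L\to0$ in $\PP_0$-probability, so that $\PP_0(L>1)\to0$, and (ii) $\PP_1(L>1)\to 1-\Phi(\gamma/\sqrt2)$. For (i), I would split $L=L_{\le}+L_{>}$ according to whether $x_v\le \beta$ (roughly $x_v\le\sqrt{2\log n}+O(1)$). The truncated first moment $\EE_0 L_{\le}$ is $\Phi(0)+o(1)$-type only at the boundary; to get decay I would instead use the truncation at $x_v\le \sqrt{2\log n}-\omega(1)$, and control $L_>$ by a union bound showing no vertex has $x_v$ above $\sqrt{2\log n}-\omega(1)$ in the part contributing, plus a truncated second moment. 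This needs the degree MGFs to be stable under conditioning on another vertex's degree, handled by hypergeometric estimates.

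For (ii), under $\PP_1$ we have $d_{v_*}=k+\Bin$-like$(n-1-k,\approx p)$, so $x_{v_*}\approx \beta+Z$ with $Z$ standard normal. The hub term is $\frac1n e^{\beta x_{v_*}-\beta^2/2}=\exp(\beta^2/2+\beta Z-\log n)$, and with $\beta^2/2-\log n\approx -\gamma\sqrt{\log n}$ and $\beta\approx\sqrt{2\log n}$ this exceeds $1$ iff $Z>\gamma/\sqrt2+o(1)$. The remaining sum over $v\ne v_*$ is essentially a null-type sum (the planted edges shift other degrees by at most one each), so by (i) it is $o_P(1)$, and then the $O(1)$ slack around the threshold is negligible because $\log(\text{hub term})$ has spread $\beta\to\infty$. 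This gives (ii).

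I expect the main obstacle to be step (i) exactly at criticality. There the REM partition function is not concentrated, and showing $L\to0$ rather than $O(1)$ depends on the $-\gamma\sqrt{\log n}$ correction and on tail behaviour of the maximum degree. I would handle the largest few degrees directly with Bonferroni inequalities on the events $\{x_v>t\}$, which requires joint tail estimates for pairs and triples of hypergeometric degrees with Gaussian-accurate moderate deviations. For those I would use strong Rayleigh/cumulant bounds.
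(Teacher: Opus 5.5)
\textbf{There is a genuine gap.} Your step (i), that $L\to0$ in $\PP_0$-probability, is false, and it contradicts your own step (ii). Since $\TV(\PP_0,\PP_1)=1-\EE_0[\min(L,1)]$ and $\min(L,1)\le1$, having $L\to0$ in $\PP_0$-probability would force $\TV\to1$. That gives $\PP_1(L>1)\to1$, which is incompatible with (ii) and with the statement for every fixed $\gamma$.

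\textbf{Where (i) breaks.} Take $\beta=\sqrt{2\log n}-\gamma/\sqrt2+o(1)$. The mass of $\EE_0 L$ sits on $x\approx\beta\pm O(1)$. The expected number of vertices with $x_v\ge\sqrt{2\log n}-K$ is about $e^{K\sqrt{2\log n}-K^2/2}/\sqrt{4\pi\log n}\to\infty$. So no union bound removes the vertices above $\sqrt{2\log n}-\omega(1)$, and these are exactly the vertices that carry the mass.

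The only level where the union bound works is about $\sqrt{2\log n}$, since $n\Phi^c(\sqrt{2\log n})\to0$. Truncating there leaves a mean of $\Phi(\gamma/\sqrt2)+o(1)$, not $o(1)$. The truncated second moment then has diagonal part $O((\log n)^{-1/2})$ relative to the off-diagonal part. Hence the truncated sum concentrates, and $L\to\Phi(\gamma/\sqrt2)$ in $\PP_0$-probability.

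So, contrary to your last paragraph, the truncated REM sum \emph{is} concentrated at criticality. The $-\gamma\sqrt{\log n}$ correction does not drive $L$ to $0$; it only moves the limiting constant away from $\tfrac12$. For the same reason, your claim in (ii) that the non-hub sum is $o_P(1)$ ``by (i)'' is unfounded.

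\textbf{The repair.} Replace (i) by the statement that $L\le C+\varepsilon$ whp under $\PP_0$ for a constant $C<1$. This is what the paper proves in Proposition~\ref{propZphasetransition}(ii), stated there with $C=\tfrac12$, which is the $\gamma=0$ value; the same computation gives $C=\Phi(\gamma/\sqrt2)$, and any $C<1$ suffices. The paper truncates at $t_n\approx\sqrt{2\log n}$ and runs a second-order Bonferroni expansion of the truncated second moment. The key input is that the MGF of one or two degrees is stable under conditioning on another degree (Lemmas~\ref{lem:mgf-stab-d2} and~\ref{lem:mgf-stab-d3}), together with Gaussian-accurate hypergeometric moderate deviations. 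Note that a first-moment or Markov bound cannot give $\PP_0(L>1)\to0$ here, because the truncated mean is not small; you genuinely need concentration.

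\textbf{Finishing (ii).} Use a coupling in which the unplanted edges under $\PP_1$ are contained in a $G(n,m)$ sample, as in Lemma~\ref{lemLR_upperBound}. Then the non-hub part of $L$ under $\PP_1$ is at most $C+\varepsilon$ whp. Your hub computation then finishes the argument: $\log$ of the hub term has spread $\beta\to\infty$, so moving its threshold from $1$ to any fixed constant in $(0,1)$ shifts the condition on $Z$ by only $O(1/\beta)$. This gives $\PP_1(L>1)\to1-\Phi(\gamma/\sqrt2)$ and $\PP_0(L>1)\to0$.

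With that fix, your direct use of $\TV=\PP_1(L>1)-\PP_0(L>1)$ is a somewhat cleaner packaging of the paper's two halves: the max-degree-test lower bound and the upper bound obtained by splitting on $\Delta\le t^*$.
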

This theorem establishes a sharp threshold for detecting planted stars and finds the precise scaling window around this sharp threshold.  

\begin{remark}
    A similar result holds in an analogous formulation in the independent edges setting. We test $H_0 : G \sim G(n,p)$ against $H_1 : G \sim \PP_1$, where a sample from the alternative is the union of a randomly planted $k$-star and $G(n,p)$. Here $k$ satisfies \eqref{eqAsmpt_on_k}, $m$ is defined by \eqref{eqScalingWindowM}, and we set $p = m/N$. Note that the spike size $k$ is dominated by the standard deviation of the edge count. Therefore it is without loss that the ambient random graph $G(n,p)$ is the same for the null and alternative. We omit the proofs because they are the same but simpler.
\end{remark}

Now consider the likelihood ratio between the planted and null distributions in \eqref{eqMain_detection_problem}:  
\begin{equation}
\label{eqLikelihood}
    \Lambda(G) = \frac{\ud \PP_1}{\ud \PP_0}(G).
\end{equation}
We recall the following characterization of total variation distance:
\begin{equation} \label{eqTVChar}
    \TV(\PP_0, \PP_1) = \mathbb P_0(\Lambda \ge 1) - \mathbb P_1(\Lambda \ge 1) \,,
\end{equation}
which is equivalent to the fact that the likelihood ratio test (choosing $H_0$ if $\Lambda(G) < 1$ and $H_1$ otherwise) achieves the minimum possible sum of Type-I and Type-II errors. 

Complementing Theorem~\ref{thmMainResult_TV}, we present a very simple test statistic for distinguishing between $H_0$ and $H_1$.  Define, for any fixed $\alpha > 1$, the null max degree threshold
\begin{equation} \label{eqThresholdDefn}
t^\ast = t^\ast(n,m,k)= \frac{2m}{n}+\sqrt{\frac{2m}{n}\Big(1-\frac{m}{N}\Big)}\,
\sqrt{2\log n-\frac{1}{\alpha}\log\log n}\, .
\end{equation}
It will be convenient to consider the critical scaling \eqref{eqScalingWindowM} in $m$ at a less granular level by defining, for $c > 0$,
\begin{equation}\label{eqCriticalWindow_m_in_c}
    m = c\frac{k^2n}{\log n}\big(1 + o(1)\big) \, 
\end{equation}

\begin{theorem}\label{thmOptimalTest}
Fix $\gamma \in \mathbb R$. Suppose the star size $k$ satisfies \eqref{eqAsmpt_on_k} and $m$ satisfies~\eqref{eqScalingWindowM}. Then thresholding the maximum degree $\Delta$ of $G$ at $t^\ast$ coincides with thresholding the likelihood ratio at $1$ with high probability under both $\PP_0$ and $\PP_1$.
That is, the probability of the symmetric difference of the events $\{\Lambda(G) \ge1 \}$ and $\{ \Delta(G) \ge t^\ast \}$ is $o(1)$ under both $\mathbb P_0$ and $\mathbb P_1$.
\end{theorem}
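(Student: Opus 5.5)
We will prove Theorem~\ref{thmOptimalTest} by making the likelihood ratio explicit and comparing it to the maximum degree on each side. Conditioning on the hub and its $k$ planted neighbours gives, for every graph $G$ with $m$ edges,
\[
\Lambda(G)=\frac{1}{n}\sum_{v\in[n]} \frac{\binom{\deg v}{k}}{\E_0\binom{\deg v}{k}} ,
\qquad \E_0\binom{\deg v}{k}=\binom{n-1}{k}\frac{\binom{N-k}{m-k}}{\binom{N}{m}} .
\]
This is exactly a random energy model: the partition function is a sum of $n$ weakly dependent terms $e^{k\log(\deg v)}$. Writing $d=2m/n$, $\sigma^2=d(1-m/N)$ and $\deg v=d+\sigma x_v$, we have $\log\binom{\deg v}{k}-\log\binom{d}{k}\approx k\sigma x_v/d-k^2\sigma^2x_v^2/(2d^2)$, which uses $k\ll\sqrt n$. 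The inverse temperature is therefore $\beta=k\sigma/d\approx k/\sqrt d$. With $m$ as in \eqref{eqScalingWindowM} we get $\beta\approx\sqrt{2\log n}\,(1-\gamma/(2\sqrt{\log n}))$, so $\beta$ sits at the REM critical value $\sqrt{2\log n}$ to second order. The normalisation $\E_0\binom{\deg v}{k}$ equals $\binom{d}{k}\exp(\beta^2/2+o(1))$, by the hypergeometric cumulant control mentioned in the introduction; higher cumulants contribute $O(k^3/n)$-type errors, which vanish.

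\textbf{Under $\PP_0$.} We show $\Lambda\to0$ in probability and $\Delta<t^\ast$ whp, so both events in the symmetric difference have probability $o(1)$. For the maximum degree, a union bound with the hypergeometric local CLT/moderate deviations gives
\[
\P_0(\Delta\ge t^\ast)\le n\,\Phi^c\bigl(\sqrt{2\log n-\alpha^{-1}\log\log n}\bigr)\asymp(\log n)^{1/(2\alpha)-1/2}\to0,
\]
since $\alpha>1$. For $\Lambda$, we truncate at $x_v\le \sqrt{2\log n}$, which holds whp. The truncated first moment is then $\approx \Phi^c\bigl((\beta-\sqrt{2\log n})\cdot\ldots\bigr)$ restricted to the region $x\le\sqrt{2\log n}<\beta+O(1)$. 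More carefully, the tilted mean $\beta$ exceeds the truncation by only $O(\gamma)$, so this naive bound gives a constant rather than $o(1)$. We therefore truncate at $\sqrt{2\log n}-\tfrac{1}{2\alpha}\log\log n/\sqrt{2\log n}$, the location of the actual null maximum, which is below $\beta$ by $\omega(1)$ on the Gaussian scale once the $\log\log n$ correction is included. This yields $\E_0[\Lambda\mathbf 1_{\Delta<t^\ast}]\to0$.

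\textbf{Under $\PP_1$.} The hub has degree $\deg \PSCenterVertex\approx d+k+\sigma Z$, i.e.\ $x_{\PSCenterVertex}\approx \beta+Z$ with $Z$ standard normal, and its term in $\Lambda$ is
\[
\frac1n\exp\bigl(\beta x_{\PSCenterVertex}-\beta^2/2\bigr)=\exp\bigl(\beta Z+\beta^2/2-\log n\bigr).
\]
The contribution of the non-hub vertices behaves as under the null and is $o(1)$ whp, which follows from the null analysis together with contiguity-type control after removing the hub; the degree-conditioning stability handles this. Hence $\Lambda\ge1$ iff $Z\ge(\log n-\beta^2/2)/\beta+o(1)$. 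Likewise $\Delta\ge t^\ast$ iff the hub degree exceeds $t^\ast$, i.e.\ iff $Z\ge\sqrt{2\log n}-\beta+o(1)$. A direct computation shows the two thresholds agree up to $o(1)$: both equal $\gamma/\sqrt2+o(1)$, since $(\log n-\beta^2/2)/\beta-(\sqrt{2\log n}-\beta)=(\sqrt{2\log n}-\beta)^2/(2\beta)=O(1/\sqrt{\log n})$. The $\log\log n$ term in $t^\ast$ contributes only $O(\log\log n/\sqrt{\log n})$, and $Z$ has a continuous law, so the symmetric difference has vanishing probability.

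\textbf{Main obstacle.} The delicate step is the null statement $\Lambda\to0$ in probability at criticality. At $\beta\approx\sqrt{2\log n}$ the REM partition function is dominated by a few extreme vertices, so moments are useless without truncation. The truncation must use the precise order-$\log\log n$ location of the null maximum degree, and the dependence among the degrees in $G(n,m)$ must be handled, both via the paper's stability of conditional degree moment generating functions. I would first attempt the truncated first-moment bound above, then verify that the boundary layer of vertices near the maximum contributes $o(1)$, using Bonferroni inequalities to count such vertices.
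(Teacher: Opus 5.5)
There is a genuine gap on the null side, and it carries over to the planted side. Your claim that $\Lambda\to0$ in $\PP_0$-probability is false. The $\log\log n$ correction moves the truncation level by only $\frac{\log\log n}{2\alpha\sqrt{2\log n}}=o(1)$ on the standardized scale; you use exactly this fact in your $\PP_1$ paragraph. Hence $(t^*-\mu)/\sigma-\beta=\gamma/\sqrt2+o(1)$, and your truncated first moment is $\E_0[\Lambda\1\{\Delta<t^*\}]=\Phi(\gamma/\sqrt2)+o(1)$, not $o(1)$.

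No choice of truncation can rescue the claim. By change of measure, $\PP_1(\Lambda<1)=\E_0[\Lambda\1\{\Lambda<1\}]\le\E_0[\min(\Lambda,1)]$. So $\Lambda\to0$ under $\PP_0$ would force $\PP_1(\Lambda<1)\to0$. That contradicts your own planted computation $\PP_1(\Lambda<1)\to\Phi(\gamma/\sqrt2)$, and also Theorem~\ref{thmMainResult_TV}. The heuristic behind the claim is also off. At the critical temperature the REM partition function is not carried by a few extreme vertices. It keeps a nonvanishing fraction of its mean, contributed by many vertices lying $O(1)$ below the edge (cf.\ Proposition~\ref{propZphasetransition}(ii)).

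This matters because the theorem needs $\PP_0(\Lambda\ge1)=o(1)$. A truncated first moment converging to a constant $c_0\in(0,1)$ only gives $\PP_0(\Lambda\ge1)\le c_0+o(1)$ via Markov. The missing ingredient is concentration: on $B^c=\{\max_iY_i<t_n\}$, the normalized sum $e^{-a_n^2/2}n^{-1}\sum_ie^{a_nY_i}$ must be shown to concentrate at a constant strictly below $1$. The paper does this with a truncated second moment (Proposition~\ref{propZphasetransition}(ii)). At $c=\tfrac14$ this needs the second-order inclusion--exclusion for the pair term (the $J_{12}$ computation). It also needs the conditional-MGF stability Lemmas~\ref{lem:mgf-stab-d2} and~\ref{lem:mgf-stab-d3} to handle dependence among degrees. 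Your proposed ``boundary layer'' Bonferroni count is aimed at the wrong target.

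Likewise, under $\PP_1$ the non-hub contribution is not $o(1)$. What is true, and sufficient, is that it stays below a constant less than $1$ whp. The paper obtains this in Lemma~\ref{lemLR_upperBound} by coupling the unplanted edges inside a $G(n,m)$ sample, bounding the leaves' contribution by Markov, and then invoking Proposition~\ref{propZphasetransition}. With that replacement your threshold comparison survives essentially unchanged and matches the paper's. The condition $\Lambda\ge1$ becomes ``hub term $\ge1-c'$'' with $c'$ bounded away from $1$, which moves the $Z$-threshold by only $O(1/\beta)=o(1)$.

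A minor point: the quadratic term in your expansion should be $k\sigma^2x_v^2/(2d^2)=O((\log n)^2/k)$, which is negligible because $k\gg(\log n)^2$. With $k^2$ in place of $k$ it would be of order $(\log n)^2$.
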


Thus, an immediate consequence of Theorem \ref{thmOptimalTest} is that the optimal test $\1\!\inbraces{ \Lambda(G) \geq 1 }$ that achieves the total variation rate (and hence minimizes the sum of Type-I and Type-II errors) is approximately equivalent in the critical window to the test $\1\{\Delta(G) \geq t^*\}$. Since the maximum degree is efficiently computable, this establishes that the planted star detection problem exhibits no statistical-to-computational gap, in a very strong sense: a very simple and tractable statistic asymptotically minimizes the sum of Type-I and Type-II errors throughout the detection scaling window. 

\begin{remark}
The preceding results indicate that thresholding the likelihood ratio (a degree $k \gg (\log n)^2$ polynomial \eqref{eqLR_Original}) or the maximum degree (not a polynomial) achieves asymptotically optimal testing error. These are not low degree tests, yet they are computationally efficient. In fact, asymptotically optimal low degree tests do exist. A calculation shows that summing signed star counts (suitably standardized powers of degrees) up to size $D$ succeeds for $D > C \log n$ for some universal constant $C$ depending only on $\gamma$.
\end{remark}

The key to our proof is the characterization of a phase transition of the likelihood ratio at the critical scaling. 

\begin{proposition} \label{propZphasetransition}
Suppose the planted star size $k$ satisfies \eqref{eqAsmpt_on_k}. Fix  $c>0$ and consider $m$ in \eqref{eqCriticalWindow_m_in_c}. Then we have the following cases under the null distribution $\PP_0$:
\begin{enumerate}[(i)]
    \item If $c>\tfrac14$, then $\Lambda(G) = 1+o(1)$ whp.
    \item If $c=\tfrac14$, then $\Lambda(G) = \frac{1}{2} +o(1)$ whp.
    \item If $c<\tfrac14$, then $\Lambda(G) = o(1)$ whp.
\end{enumerate}
\end{proposition}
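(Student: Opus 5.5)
Throughout, write $d_v$ for the degree of $v$, $\mu=2m/n$ and $\sigma^2=\mu(1-m/N)$. I would treat $\Lambda$ as the partition function of a random energy model on $n$ "configurations" (the vertices) and run the classical truncated first/second moment analysis of the REM on it.

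\textbf{Step 1: an exact formula.} Since $\PP_1(G)=\frac1n\sum_v\binom{d_v}{k}\big/\big[\binom{n-1}{k}\binom{N-k}{m-k}\big]$ and $\PP_0(G)=\binom Nm^{-1}$, the identity $\binom Nm/\binom{N-k}{m-k}=\binom Nk/\binom mk$ gives
\[
\Lambda(G)=\frac1n\sum_{v}W_v,\qquad W_v=\frac{\binom{d_v}{k}\binom Nk}{\binom{n-1}{k}\binom mk}.
\]
Under $\PP_0$, $d_v\sim\Hyp$, and its $k$-th factorial moment shows $\EE_0 W_v=1$. Moreover, $W_v$ is exactly the Radon--Nikodym derivative that turns the null law of $d_v$ into its planted law. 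The planted law of $d_v$ is $k$ plus a hypergeometric variable, so it is shifted by about $k=\beta\sigma$, where $\beta:=k/\sqrt\mu$.

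\textbf{Step 2: REM asymptotics.} Set $z_v=(d_v-\mu)/\sigma$. Expanding $\log\binom{d}{k}$ shows that, uniformly for $|z_v|\le C\sqrt{\log n}$,
\[
\log W_v=\beta z_v-\tfrac{\beta^2}{2}+o(1).
\]
Here $k\ll\sqrt n$ controls the finite-population corrections, and $k\gg(\log n)^2$ makes the cubic terms $k(\sigma z/\mu)^3$ negligible. I also need moderate-deviation estimates $\PP_0(z_v>x)\sim\Phi^c(x)$ for $x\le C\sqrt{\log n}$; I would obtain these from cumulant bounds for the hypergeometric distribution. With $m=ck^2n/\log n$ we get $\beta^2=\log n/(2c)$. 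Comparing with the REM threshold $\beta_c^2=2\log n$ gives $\beta<\beta_c$, $\beta=\beta_c$, $\beta>\beta_c$ exactly for $c>\tfrac14$, $c=\tfrac14$, $c<\tfrac14$ respectively.

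\textbf{Step 3: truncation and first moment.} Let $L=\sqrt{2\log n-\frac1\alpha\log\log n}$. A union bound with the Gaussian tail shows $\max_v z_v\le L$ whp under $\PP_0$, so $\Lambda=\Lambda_{\le}:=\frac1n\sum_v W_v\1\{z_v\le L\}$ whp. By the change of measure in Step 1,
\[
\EE_0\Lambda_{\le}=\PP_1\bigl(d_{\PSCenterVertex}\le \mu+\sigma L\bigr)=\Phi(L-\beta)+o(1).
\]
This tends to $1$ if $c>\tfrac14$, to $0$ if $c<\tfrac14$, and to $\tfrac12$ if $c=\tfrac14$, since then $L-\beta=O(\log\log n/\sqrt{\log n})\to0$. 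Case (iii) now follows from Markov's inequality.

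\textbf{Step 4: second moment.} For (i) and (ii) it suffices to show $\Var_0(\Lambda_{\le})=o(1)$.
\begin{itemize}[leftmargin=*]
\item \emph{Diagonal.} The diagonal term is $\frac1n\EE_0[W_v^2\1\{z_v\le L\}]\approx\frac{e^{\beta^2}}{n}\Phi(L-2\beta)$. For $\beta<\beta_c$ this is $o(1)$ by a Gaussian tail computation. At $\beta=\beta_c$ it is $n\,\Phi^c(2\beta-L)\approx n\,e^{-(2\beta-L)^2/2}\cdot\mathrm{poly}$. Expanding $(2\beta-L)^2$ gives a gain of $(\log n)^{-1/(2\alpha)}$, which beats the $1/\sqrt{\log n}$ prefactor only because $\alpha>1$. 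This is where the choice of $\alpha$ enters.
\item \emph{Off-diagonal.} For $u\ne v$ I must show $\EE_0[W_uW_v\1\{z_u,z_v\le L\}]=\EE_0[W_u\1\{z_u\le L\}]\,\EE_0[W_v\1\{z_v\le L\}]+o(1)$ uniformly in the pair.
\end{itemize}

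\textbf{Main obstacle.} The off-diagonal decorrelation is the step I expect to be hardest, because in $G(n,m)$ the degrees $d_u,d_v$ are dependent: they share the edge $uv$ and the global edge count. I would condition on $\1\{uv\in G\}$ and on $d_u$. Then $d_v$ minus the edge $uv$ is hypergeometric with parameters perturbed by $O(k\sqrt{\log n})$ out of $m$. I would show that the tilted law (the MGF of the degree under the weight $W_v$) and the truncated tail probabilities change by factors $1+o(1/n)$ after summing. The dangerous region is where both $z_u,z_v$ are near $L$; there I would use the stability of the hypergeometric cumulants under such parameter changes, for which strong Rayleigh and real-rootedness arguments give uniform control. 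If that estimate is too weak at criticality, I would fall back on a Bonferroni-type inclusion–exclusion over vertices with $z_v$ near $L$, replacing the variance bound near the edge.
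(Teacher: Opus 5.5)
Your route is correct, and it differs from the paper's mainly in how the truncation is organized.

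\textbf{Comparison with the paper.} The paper truncates globally, conditioning on $B^c=\{\max_i Y_i<t_n\}$. This produces cross terms in which the indicator sits on a different vertex from the weights ($S_{\mathrm{off}}$, $T_{112}$, $T_{123}$, the last needing Lemma \ref{lem:mgf-stab-d3}). It also forces a separate untruncated case $c>\tfrac12$. At $c=\tfrac14$ it needs the second-order inclusion--exclusion $M_{12}-2T_{121}+J_{12}$, which is nothing but $\E[e^{a(Y_1+Y_2)}\1\{Y_1<t_n,\,Y_2<t_n\}]$.

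Because you truncate each summand, only pairs ever appear, and that quantity is literally your off-diagonal term. All $c\ge\tfrac14$ are treated at once, with no Bonferroni step. The only thing the paper's version buys is working with a single high-probability event, and that event is exactly what generates the extra terms. Your identity $\E_0[W_v\1\{z_v\le L\}]=\PP_1(d_{v_*}\le \mu+\sigma L\mid v_*=v)$ also replaces the tilted moderate-deviation estimates in the first moment by an ordinary CLT for the hub degree.

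The decorrelation input you still need is the paper's: Lemma \ref{lem:mgf-stab-d2} together with a truncated version as in Corollary \ref{cor:cond-trunc-allk}. Your lower-truncated form follows by subtracting the upper-truncated one from the full tilted MGF, because $\Phi(L-\beta)$ stays bounded below when $c\ge\tfrac14$. You only need a factor $1+o(1)$, uniform in the conditioned degree, and that is what telescoping increments of size $O(|\lambda|/n)$ gives. Do not aim for $1+o(1/n)$, which is more than is true: degrees in $G(n,m)$ have correlation exactly $-1/(n-1)$, and tilting by $\beta$ turns this into a relative change of order $\beta^2/n\asymp \log n/n$.

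\textbf{A step that is not justified.} The problem lies in the statement rather than in your method. At $c=\tfrac14$ you assert $L-\beta=O(\log\log n/\sqrt{\log n})$. But \eqref{eqCriticalWindow_m_in_c} only gives $m=\frac{k^2n}{4\log n}(1+\eta_n)$ with $\eta_n\to0$. Then $\beta^2=2\log n/(1+\eta_n)$, so $L-\beta=(1+o(1))\,\eta_n\sqrt{(\log n)/2}+o(1)$, which tends to $0$ only if $\eta_n=o((\log n)^{-1/2})$.

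This cannot be repaired. For $m$ as in \eqref{eqScalingWindowM}, your identity together with Fact \ref{factTStar_and_PSCenter} gives $\E_0\Lambda_{\le}=\PP_1(d_{v_*}\le t^*)\to\Phi(\gamma/\sqrt2)$. Your variance bound then yields $\Lambda\to\Phi(\gamma/\sqrt2)$ in $\PP_0$-probability, which differs from $\tfrac12$ when $\gamma\neq0$. Part (ii) as stated also contradicts Theorem \ref{thmMainResult_TV}: if $\Lambda\to\tfrac12$ under $\PP_0$, then $\PP_1(\Lambda<1)=\E_0[\Lambda\1\{\Lambda<1\}]\to\tfrac12$, forcing $\TV(\PP_0,\PP_1)\to\tfrac12$.

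The paper's proof makes the same implicit assumption when it asserts $t_n-a_n\to0$. What your argument actually proves is (ii) under $\eta_n=o((\log n)^{-1/2})$, and in general $\Lambda=\Phi(L-\beta)+o(1)$ whp.

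\textbf{A smaller slip.} In the diagonal bullet, $\alpha>1$ plays no role. The factor $(\log n)^{-1/(2\alpha)}$ and the Mills factor $1/\sqrt{\log n}$ are both gains, so the diagonal is $o(1)$ for every $\alpha>0$. At criticality the Chernoff bound $\E[e^{2\beta z}\1\{z\le L\}]\le e^{\theta L}\E[e^{(2\beta-\theta)z}]$ with $\theta=2\beta-L$ already bounds the diagonal by $(1+o(1))e^{-(\beta-L)^2}(\log n)^{-1/(2\alpha)}$, with no tilted tail estimate. Where $\alpha>1$ is genuinely needed is your Step 3 union bound, since $n\Phi^c(L)\asymp(\log n)^{1/(2\alpha)-1/2}$.
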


\begin{remark}
    A consequence of Theorems \ref{thmMainResult_TV}, \ref{thmOptimalTest}, and Proposition \ref{propZphasetransition} (ii) is that throughout the entire scaling window \eqref{eqScalingWindowM} of $m$, the optimal likelihood ratio threshold test has vanishing Type-I error, since $\Lambda$ is concentrated around $\frac{1}{2}$ under $H_0$.

    This reveals an asymmetry in the optimal test, in that it is almost surely correct under the null, and it is the alternative that gives rise to the error. This is distinct from the log-Gaussian limits reminiscent of local asymptotic normality  \citep{cam1960locally} that is seen in several planted subgraph testing problems \citep{wee2025cluster} (and related asymptotic enumeration of subgraph counts \citep{Janson1994Numbers}), and in many other detection problems for instance in spiked models with low rank structure \citep{ElAlaoui20fundamental,banerjeeMa2022optimal}, and also in log-partition functions in spin glass models \citep{AbbeLiSly2021Proof}.
\end{remark}

With Theorem~\ref{thmOptimalTest} and Proposition~\ref{propZphasetransition} in hand, the only remaining ingredient needed to complete the proof of Theorem~\ref{thmMainResult_TV} is to characterize the behavior of the maximum-degree test under both $\P_0$ and $\P_1$. 
The following lemma enables this.
\begin{lemma}\label{lem:maxdeg_short}
Let $t^*$ be the threshold defined in \eqref{eqThresholdDefn}, and let $d_{\PSCenterVertex}$ be the degree of the center of the planted star under $H_1$. Then, we have the following: 
\begin{itemize}
    \item $\P_0(\Delta(G) \ge t^*)=o(1)$
    \item $\P_1(\Delta(G) \ge t^*)=\P_1(d_{\PSCenterVertex} \ge t^*)+o(1)$
\end{itemize}
\end{lemma}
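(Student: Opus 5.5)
The plan is a first-moment (union bound) argument over vertices for both bullets, using a sharp Gaussian-type tail estimate for a single degree. Under $\P_0$, the degree $d_v$ of a fixed vertex is $\Hyp(N, n-1, m)$. Its mean is $\mu = 2m/n$ and its variance is $\sigma^2 = \frac{2m}{n}(1-\frac{m}{N})(1+o(1))$. In our regime $\mu \asymp k^2/\log n \to \infty$ and $k \ll \sqrt n$, so $\mu \ll n$.

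\textbf{Single-degree tail.} The first step is a moderate-deviation estimate for $\P_0(d_v \ge \mu + x\sigma)$ with $x = \sqrt{2\log n - \alpha^{-1}\log\log n}$. I would obtain it by an exponential tilt, i.e.\ a Chernoff bound with a matching local lower bound, using the hypergeometric (or binomial-comparison) moment generating function. The target is
\[
\P_0(d_v \ge t^*) = \Phi^c(x)\,e^{O(x^3/\sigma)}(1+o(1)).
\]
This is the \emph{main obstacle}: $x^3/\sigma \asymp (\log n)^{3/2}\sqrt{\log n}/k$, which is $o(1)$ exactly because $k \gg (\log n)^2$. So the third-cumulant correction is negligible, and this is where the lower bound in \eqref{eqAsmpt_on_k} is used. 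Cumulant control of the hypergeometric distribution (e.g.\ via strong Rayleigh/real-rootedness, as the paper indicates) gives $\kappa_j = O(\sigma^2)$ for all $j$, which justifies the expansion. With this estimate,
\[
n\,\Phi^c(x) \asymp \frac{n}{x}\,e^{-x^2/2} = \frac{n}{x}\,n^{-1}(\log n)^{1/(2\alpha)} \asymp (\log n)^{1/(2\alpha)-1/2} \to 0
\]
since $\alpha > 1$. The union bound over $n$ vertices then gives the first bullet.

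\textbf{Second bullet.} Under $\P_1$ we have $\{d_{v_*} \ge t^*\} \subseteq \{\Delta \ge t^*\}$, so it suffices to show
\[
\P_1\bigl(\exists\, v \ne v_*: d_v \ge t^*\bigr) = o(1).
\]
Conditional on $v_*$ and the star's leaf set $L$, a vertex $v \ne v_*$ has degree $\1\{v\in L\} + d'_v$. Here $d'_v$ counts edges among the $m-k$ uniformly chosen edges out of $N-k$ that hit $v$, excluding the pair $vv_*$ when $v \in L$. Thus $d'_v$ is hypergeometric with parameters differing from the null ones by $O(k)$ in total count and by at most $1$ in the population of $v$-edges. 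Its mean is $\frac{2(m-k)}{n}(1+O(1/n)) = \mu - O(k/n)$, and its variance is $\sigma^2(1+o(1))$.

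Hence $\P_1(d_v \ge t^*) \le \P(d'_v \ge t^*-1)$. Shifting the threshold by $O(1)$ in absolute terms changes $x$ by $O(1/\sigma)$, which changes $e^{-x^2/2}$ by a factor of $e^{O(x/\sigma)} = 1+o(1)$, because $x/\sigma \asymp \log n/k \to 0$. The variance perturbation of relative size $o(1/\log n)$ must also be checked: it needs $x^2\cdot O(k/m + 1/n)=o(1)$, which holds since $k/m \asymp \log n/(kn)$. The same tail estimate then gives $\P_1(d_v \ge t^*) = O(\Phi^c(x))$ uniformly in $v$. A union bound over $n-1$ vertices yields $o(1)$. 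Therefore
\[
\P_1(\Delta \ge t^*) = \P_1(d_{v_*}\ge t^*) + o(1).
\]
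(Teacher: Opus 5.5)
Your proposal is correct, but it argues both bullets differently from the paper.

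\textbf{First bullet.} The paper transfers the question from $G(n,m)$ to $G(n,p)$, using that $\{\Delta\ge t^*\}$ is a convex property (Lemma~\ref{thm:max_deg_gnm}, deduced from Lemma~\ref{lem:max_deg_gnp}). It then applies the binomial De Moivre--Laplace tail (Lemma~\ref{lem:dem_laplace}) and a first-moment bound. This needs only classical binomial estimates plus a standard transfer principle. You stay in $G(n,m)$ and use the hypergeometric moderate-deviation estimate at zero tilt. That estimate is what Lemma~\ref{lemHypTailToGaussianCDF} extracts from the strong-Rayleigh cumulant bound (Lemma~\ref{lemHyp_Cum_CGF_MGF_Bound}). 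Your route is more direct but relies on the heavier cumulant machinery. Both routes use $k\gg(\log n)^2$ at the same point, namely $x=o(\sigma^{1/3})$.

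\textbf{Second bullet.} The paper uses domination. The graph $H$ induced on $[n]\setminus\{v_*\}$ contains only unplanted edges and is uniform given its edge count, which is at most $m-k$. Hence $H$ is dominated by $G(n-1,m)$ for increasing events, and $\max_{u\ne v_*}d_u\le\Delta(H)+1$. Everything then reduces to the first bullet on $n-1$ vertices at threshold $t^*-1$, with no computation in the planted model.

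You instead condition on $(v_*,L)$, identify $d_v-\1\{v\in L\}\sim\Hyp(N-k,\,n-1-\1\{v\in L\},\,m-k)$, and take a union bound. This requires two things:
\begin{itemize}
\item the tail estimate for a perturbed hypergeometric, which is fine because the real-rootedness cumulant bound holds for every hypergeometric law;
\item the bookkeeping $x/\sigma\to0$ and $x^2(k/m+1/n)\to0$.
\end{itemize}
That bookkeeping is exactly the robustness the paper leaves implicit when it says that replacing $t^*$ by $t^*-1$ (and $n$ by $n-1$) does not change the $o(1)$ conclusion. Your version makes it explicit.

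\textbf{A point to state carefully.} This lemma needs only an upper bound on the single-degree tail, but that bound must retain the Mills factor $1/x$. A bare Chernoff bound $e^{-x^2/2+o(1)}$ would give $n e^{-x^2/2}=(\log n)^{1/(2\alpha)}\to\infty$. So your exponential-tilt step has to be the full Cram\'er-type argument: tilt, then a Gaussian approximation of the tilted law accurate to $o(1/x)$, as in Lemma~\ref{lemHypTailToGaussianCDF}. The Chernoff inequality alone is not enough, and the matching lower bound you mention plays no role here.
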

Since the distribution of $d_{\PSCenterVertex}$ (centered and scaled) is asymptotically Gaussian (c.f.~Lemma \ref{lemHypTailToGaussianCDF}), we can compute the limit of $\P_1(d_{\PSCenterVertex} \ge t^*)$, and it will be seen that it is in fact the variance of $d_{\PSCenterVertex}$ that determines the width of the scaling window.

Suppose we know that $G$ is drawn from the planted model. Then our results imply that in the critical regime, we can perform a partial recovery of the planted star with an error probability that depends on our location in the scaling window.
\begin{corollary}\label{corrRecovery}
    Let $G \sim \PP_1$ and suppose $k$ satisfies \eqref{eqAsmpt_on_k} and we are in the scaling window \eqref{eqScalingWindowM} for some $\gamma \in \RR$. Then the planted hub vertex can be recovered with probability $1 - \Phi\big( \frac{\gamma}{\sqrt{2}} \big) + o(1)$. On the other hand, whp, we cannot recover even a constant fraction of the planted star leaves.
\end{corollary}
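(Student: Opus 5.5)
Corollary~\ref{corrRecovery} has two parts, and I would handle them separately.

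\textbf{Recovering the hub.} I will use the estimator $\hat v = \arg\max_v d_v$, breaking ties arbitrarily. The claim is that $\hat v = \PSCenterVertex$ holds, up to an $o(1)$ probability error, exactly on the event $\{d_{\PSCenterVertex} \ge t^*\}$.

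\emph{Lower bound.} Write $\Delta_{-}$ for the maximum degree over the non-hub vertices. It suffices to show $\P_1(\Delta_{-} \ge t^*) = o(1)$. Then, on $\{d_{\PSCenterVertex}\ge t^*\}$, the hub is the unique maximizer whp. The bound $\P_1(\Delta_{-}\ge t^*)=o(1)$ is essentially the ingredient behind the second bullet of Lemma~\ref{lem:maxdeg_short}. Conditionally on $\PSCenterVertex$ and the $k$ star leaves, a non-hub vertex has degree equal to $O(1)$ planted edges plus a hypergeometric count from $m-k$ uniform edges. Its tail at $t^*$ therefore matches the null tail up to a factor $1+o(1)$, since $k\ll\sqrt n$ makes the shift negligible on the $\sqrt{m/n}$ scale. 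A union bound over the $n$ vertices, with the $\log\log n/\alpha$ correction in $t^*$, then gives $o(1)$. This is the same computation as for $\P_0(\Delta\ge t^*)=o(1)$.

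\emph{Upper bound.} On $\{d_{\PSCenterVertex} < t^*\}$, I need the hub not to be the maximizer with probability $1-o(1)$. Simply showing that some other vertex exceeds $t^*$ would fail, because under $\P_0$ no vertex exceeds $t^*$. Instead I would show two things:
\begin{enumerate}[(a)]
\item $\Delta_{-}$ concentrates at $2m/n+\sqrt{2m/n}\sqrt{2\log n}\,(1-O(\log\log n/\log n))$, within a window of width $o(\sqrt{m/n}/\sqrt{\log n})$. This is the usual Gumbel-type concentration of a maximum.
\item $d_{\PSCenterVertex}$, centered at $2m/n+k$ and scaled by $\sqrt{2m/n}$, is asymptotically Gaussian by Lemma~\ref{lemHypTailToGaussianCDF}. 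Hence $\P_1\bigl(d_{\PSCenterVertex}\in[\Delta_{-}, t^*]\bigr)\to0$, because both endpoints lie within $o(\sqrt{m/n})$ of each other.
\end{enumerate}
Combining these, the success probability is $\P_1(d_{\PSCenterVertex}\ge t^*)+o(1)$.

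\emph{Evaluating the limit.} Substituting \eqref{eqScalingWindowM} gives $k = 2\sqrt{m\log n/n}\,(1-\gamma/(2\sqrt{\log n})+\dots)$. This makes $(t^*-2m/n-k)/\sqrt{2m/n}\to \gamma/\sqrt2$, which yields $1-\Phi(\gamma/\sqrt2)$. This is the same computation used in deriving Theorem~\ref{thmMainResult_TV} from Lemma~\ref{lem:maxdeg_short}.

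\textbf{Leaves cannot be recovered.} I would use a posterior argument. Condition on $G$ and on the hub $v$, taking the best case where $v$ is known. Under $\P_1$ the planted leaf set $L$ is then a uniformly random $k$-subset of the $d_v$ neighbors of $v$. This holds because, given the edge multiset, every choice of $k$ of the $v$-incident edges as planted yields the same number of configurations: the remaining $m-k$ edges are uniform over the complement. So $\P(L=S\mid G, v)$ is uniform over $k$-subsets of $N(v)$.

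Any estimator $\hat L$ with $|\hat L|\le$ poly size therefore has $\E|\hat L\cap L|\le |\hat L\cap N(v)|\,k/d_v$. For the overlap to be a constant fraction of $k$, one needs $|\hat L|\gtrsim d_v$, and then most of $\hat L$ is wrong. Since $d_v\approx 2m/n = \Theta(k^2/\log n)\gg k$, the fraction $k/d_v = \Theta(\log n/k)=o(1)$ because $k\gg(\log n)^2$. Consequently, any estimator outputting $k$ vertices captures $o(k)$ leaves in expectation, and by Markov's inequality it captures fewer than $\epsilon k$ leaves whp. The same conclusion holds after averaging over the unknown hub.

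\textbf{Main obstacle.} The delicate step is the upper bound for the hub: showing the hub fails to be the argmax when $d_{\PSCenterVertex}<t^*$. It requires second-order concentration of $\Delta_{-}$ precisely at the $t^*$ scale, in the presence of $G(n,m)$ degree dependencies. I would first try second-moment or Bonferroni bounds for the count of non-hub vertices exceeding $t^*-\epsilon\sqrt{m/(n\log n)}$, reusing the paper's degree-MGF stability estimates.
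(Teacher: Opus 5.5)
\textbf{Verdict.} Your hub achievability argument and your leaf argument are sound, but the converse you flag as the main obstacle is set up at the wrong scale, and the method you propose for it would fail.

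The paper states this corollary without a separate proof. Its hub part is meant to follow from Lemma~\ref{lem:maxdeg_short} and the Gaussian computation in Proposition~\ref{propTV_lowerbound}, which is exactly your lower bound. The only difference is that you reprove the non-hub bound by a direct union bound, where the paper uses domination by $G(n-1,m)$.

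\textbf{The failing step.} The step that fails is the converse for the argmax rule. Claim (a) is wrong at the stated scale. With $\alpha>1$, the threshold $t^*$ lies roughly $(1-1/\alpha)\frac{\log\log n}{2\sqrt{2\log n}}\,\sigma$ above the typical value of $\Delta_{-}$. This gap is much larger than the Gumbel scale $\sigma/\sqrt{\log n}\asymp\sqrt{m/(n\log n)}$, and the maximum fluctuates at that scale rather than within $o(\cdot)$ of it. Concretely, lowering the threshold by $\epsilon\sqrt{m/(n\log n)}$ multiplies the expected number of exceedances only by $e^{O(\epsilon)}$. That expectation therefore stays $\Theta((\log n)^{1/(2\alpha)-1/2})\to 0$. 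Your second-moment/Bonferroni computation would thus show that whp no non-hub vertex exceeds $t^*-\epsilon\sqrt{m/(n\log n)}$, which is the opposite of what you want.

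\textbf{The fix.} Step (b) only needs $\Delta_{-}\ge t^*-\epsilon\sigma$ whp for each fixed $\epsilon>0$, and the paper already contains the tools for this.
\begin{enumerate}[(i)]
\item In the permutation coupling from the proof of Lemma~\ref{lemLR_upperBound}, let $G'\sim G(n,m-k)$ be the first $m-k$ edges of $\pi$. Then $G'\setminus S\subseteq G$, so $d_u(G)\ge d_u(G')-1$ for $u\ne\PSCenterVertex$. Moreover, $\PSCenterVertex$ is independent of $G'$.
\item Lemma~\ref{thm:max_deg_gnm} with some $\alpha'\in(0,1)$ gives a vertex of $G'$ above $t^*_{\alpha'}$ whp. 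That vertex equals $\PSCenterVertex$ with probability at most $1/n$.
\item Since $t^*_{\alpha}-t^*_{\alpha'}=O(\sigma\log\log n/\sqrt{\log n})=o(\sigma)$, this gives $\Delta_{-}\ge t^*-\epsilon\sigma$ whp.
\item Then $\P_1\bigl(d_{\PSCenterVertex}\in[\Delta_{-},t^*)\bigr)\le\P_1\bigl(d_{\PSCenterVertex}\in[t^*-\epsilon\sigma,t^*)\bigr)+o(1)=O(\epsilon)+o(1)$. Using a deterministic window also removes the dependence between $d_{\PSCenterVertex}$ and $\Delta_{-}$.
\end{enumerate}

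\textbf{Why the converse is worth keeping.} It is not needed for the corollary read as an achievability statement. However, $\P_1(\PSCenterVertex=v\mid G)\propto\binom{d_v}{k}$ is increasing in $d_v$, so the argmax-degree rule is the MAP estimator. Your corrected converse therefore shows that $1-\Phi(\gamma/\sqrt2)$ is the optimal recovery probability, not merely an achievable one.

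\textbf{Leaves.} Your posterior-uniformity argument is correct. State the restriction $|\hat L|\le k$ (or $O(k)$) as part of the claim, since outputting $N(\hat v)$ captures every leaf whenever the hub is found. Also make the averaging over the unknown hub explicit: $\E|\hat L\cap L|\le\E[k^2/d_{\PSCenterVertex}]=o(k)$. This uses $d_{\PSCenterVertex}\ge k$ always and $d_{\PSCenterVertex}\ge\mu/2$ whp.
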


\subsection{Related work}

\paragraph{Planted subgraph detection} Aside from the  papers mentioned above that investigate detection thresholds and tests for specific planted motifs in random graphs, there have also been attempts to provide general frameworks for understanding arbitrary planted subgraphs. \cite{elimelech2025detecting} provides detection thresholds for a model which plants a subgraph of edge density differing from that of the ambient random graph. \cite{YuZadikZhang2025CountingStars} considers the effectiveness of constant-degree test statistics in the detection of arbitrary planted subgraphs in a dense \ER~$G(n,p)$. These results are not directly applicable to our setting since they are in the $G(n,p)$ setting and do not focus on the precise total variation rate in the critical regime.

\paragraph{Error curves in the critical regime} An asymptotic characterization of the likelihood ratio is a canonical problem in statistics, with a seminal result due to Wilks \cite{wilks1938large}. Such precise descriptions in the context of planted subgraph problems are somewhat rare. Notable exceptions are \cite{mossel2025weak, moitraWein2025precise,wee2025cluster}. Precise results in an abstract planted problem setting, not necessarily involving graph structure, are given in \cite{perkins2013forgetfulness}. A special case of the main result there can be interpreted as a variation of the planted star problem considered here, where the hub is known, and with only the degrees being observed. This difference leads to a different optimal statistic than the max degree test.  See also the remarks in~\cite[Section 9]{perkins2013forgetfulness} which suggested studying an unlabeled planted balls-and-bins problem and served as the original inspiration for this work.

The likelihood ratio in planted subgraph problems is intimately related to the subgraph counting problem, as evidenced by \eqref{eqLR_Original} (see also \citep{massoulie2019planting} Lemma 5). In light of this, we mention the sharp results of \cite{Janson1994Numbers} that give an asymptotic enumeration of the number of spanning trees (among other subgraphs considered). Our result Proposition \ref{propZphasetransition}  complements this by giving the asymptotics of the number of large but non-spanning stars.

\section{Connection to the random energy model}
Our proof begins with a sharp approximation of the likelihood ratio \eqref{eqLikelihood}, which we now spell out. Let $\inbraces{d_i}$ denote the degrees of the vertices of the graph $G$. Then
\begin{equation}\label{eqLR_Original}
    \Lambda(G) = \frac{\binom{N}{m}}{n\binom{n-1}{k}\binom{N-k}{m-k}} \sum_{i=1}^n \binom{d_i}{k}.
\end{equation}
In other words, it is the scaled number of $k$-stars in $G$.

Under the null distribution, the $d_i$'s are marginally $\Hyp(N,n-1,m)$ (c.f.~Definition \ref{defHypergeometric}) Their means and variances are 
\begin{equation}
\label{eqMuSigDef}
\mu := \E_0[d_i]= (n-1)p \sim \frac{2m}{n},
\quad
\sigma^2:=\Var_0(d_i)=(n-1)p(1-p)\frac{N}{N-1}\cdot\frac{N-(n-1)}{N-1}\sim \mu .
\end{equation}
Define the standardized degrees $Y_i \coloneqq (d_i - \mu)/\sigma$. The starting point of our analysis is to express the likelihood ratio $\Lambda$ in the following form. 
\begin{lemma}
\label{lemLR_expansion}
Fix $c>0$ and suppose the planted star size $k$ satisfies \eqref{eqAsmpt_on_k}, and  $m = (c+o(1)) \frac{nk^2}{\log n}$. Then, whp under both $\P_0$ and $\P_1$,
\begin{equation}
\label{asymptotic_likelihood_ratio}
\Lambda(G)
=
(1+o(1))\,
e^{-a_n^2/2}\,
\frac{1}{n}\sum_{i=1}^n e^{a_n Y_i}, \qquad  a_n = \frac{k\sigma}{\mu} \sim  \sqrt{\frac{\log n}{2c}}
\end{equation}
\end{lemma}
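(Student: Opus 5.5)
Starting from \eqref{eqLR_Original}, I would split the likelihood ratio into a deterministic prefactor and a sum of binomial ratios:
\[
\Lambda(G)=\frac{\binom{N}{m}\binom{n-1}{k}^{-1}}{\binom{N-k}{m-k}}\cdot\frac1n\sum_{i=1}^n\frac{\binom{d_i}{k}}{\binom{n-1}{k}}\cdot\binom{n-1}{k}
=C_{n}\cdot \frac1n\sum_i \frac{\binom{d_i}{k}}{\binom{\mu}{k}},
\qquad C_n=\frac{\binom{N}{m}\binom{\mu}{k}}{\binom{n-1}{k}\binom{N-k}{m-k}},
\]
where $\binom{\mu}{k}$ is interpreted via Gamma functions. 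The plan is to (a) show $C_n=(1+o(1))\,e^{-a_n^2/2}\cdot e^{a_n^2}\cdot(\text{correction})$, and more precisely to match it against (b) an expansion of each ratio $\binom{d_i}{k}/\binom{\mu}{k}$ as $\exp(a_nY_i - a_n^2/2 + \text{error})$, so that the two Gaussian-type terms combine into the stated $e^{-a_n^2/2}$. For (a), I would use $\binom{N}{m}/\binom{N-k}{m-k}=\prod_{j=0}^{k-1}\frac{N-j}{m-j}=p^{-k}\exp(O(k^2/m))$, and $\binom{\mu}{k}/\binom{n-1}{k}=\prod_{j<k}\frac{\mu-j}{n-1-j}=p^{k}\exp\bigl(-\tfrac{k^2}{2\mu}+O(k^3/\mu^2)+O(k^2/n)\bigr)$. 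Since $\mu\asymp k^2/\log n$, the term $k^2/(2\mu)$ is of order $\log n$, which is not negligible, while $k^3/\mu^2\asymp (\log n)^2/k=o(1)$ by \eqref{eqAsmpt_on_k}, and $k^2/m=o(1)$. So $C_n=\exp(-k^2/(2\mu)+o(1))$.

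For (b), I would write $d_i=\mu+\sigma Y_i$ and expand
\[
\log\frac{\binom{d_i}{k}}{\binom{\mu}{k}}=\sum_{j<k}\log\frac{d_i-j}{\mu-j}=\sum_{j<k}\log\Bigl(1+\frac{\sigma Y_i}{\mu-j}\Bigr).
\]
The linear term gives $\sigma Y_i\sum_{j<k}(\mu-j)^{-1}=a_nY_i(1+O(k/\mu))$. The quadratic term gives $-\tfrac{k\sigma^2Y_i^2}{2\mu^2}(1+o(1))$, and the cubic terms are smaller. This expansion only needs to hold on a good event, namely $\max_i|Y_i|\le C\sqrt{\log n}$, which holds whp under $\P_0$ by hypergeometric tail bounds and a union bound. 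Under $\P_1$, the hub has $Y_{v_*}\approx a_n+O_P(1)=O(\sqrt{\log n})$ as well. On this event, $a_nY_i\cdot k/\mu=O(\log n\cdot \log n/k)=o(1)$, and $k\sigma^2Y_i^2/\mu^2=O(k\log n/\mu)=O((\log n)^2/k)=o(1)$, so both errors vanish uniformly in $i$. Since $\sigma^2\sim\mu$, we have $k^2/(2\mu)=(1+o(1))a_n^2/2$. The remaining point is that the discrepancy $k^2/(2\mu)-a_n^2/2=\tfrac{k^2}{2\mu}(1-\sigma^2/\mu)$ is of order $\log n\cdot(p+1/n)=o(1)$, because $p=m/N\asymp k^2/(n\log n)=o(1/\log n)$ using $k\ll\sqrt n$. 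Combining (a) and (b) then gives \eqref{asymptotic_likelihood_ratio}, with a uniform multiplicative error $1+o(1)$ that can be pulled out of the sum.

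The main obstacle I expect is the second-order bookkeeping. I have to check that every error term that is multiplied by $\log n$-sized quantities (namely $a_n^2$, $a_nY_i$, and $k^2/\mu$) is genuinely $o(1)$. In particular the $O(k^3/\mu^2)$ and $O(k a_n Y_i/\mu)$ terms are exactly where the hypothesis $k\gg(\log n)^2$ enters, and the exact variance formula \eqref{eqMuSigDef} must match $\mu$ to relative precision $o(1/\log n)$. A secondary issue is making the good event hold under $\P_1$. For non-hub vertices, I would compare with $\P_0$ by noting that planting changes each degree by at most $1$. For the hub, I would use the explicit hypergeometric law of $d_{v_*}-k$.
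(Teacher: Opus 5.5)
Your proposal is correct and is essentially the paper's argument: both Taylor-expand $\sum_{j<k}\log$ of the falling-factorial ratio on the event $\max_i|d_i-\mu|\le C\sigma\sqrt{\log n}$ (under $\P_0$ and $\P_1$), using $k\gg(\log n)^2$ to kill the quadratic and cubic terms and $k\ll\sqrt n$ to get $k^2/(2\mu)=a_n^2/2+o(1)$. The only difference is bookkeeping. You normalize by $(\mu)_k$, so the factor $\prod_{j<k}(1-j/\mu)\approx e^{-k^2/(2\mu)}$ sits in the deterministic prefactor $C_n$, and you check that prefactor explicitly. The paper instead normalizes by $\mu^k$, gets $-k(k-1)/(2\mu)$ from the linear term, and leaves implicit that its prefactor $\frac{(N)_k\mu^k}{(m)_k(n-1)_k}$ is $1+o(1)$. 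Note, though, that your roadmap sentence in (a)/(b) contradicts your own computation, which correctly gives $C_n=e^{-k^2/(2\mu)+o(1)}$ and $\binom{d_i}{k}/\binom{\mu}{k}=e^{a_nY_i+o(1)}$.
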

Thus, the randomness in the likelihood ratio essentially comes from the $\sum_{i=1}^n e^{a_n Y_i}$ term, which resembles the partition function of the random energy model (REM) from spin glass theory. The REM was originally formulated by \cite{Derrida1981REM} as a simplified model of the canonical Sherrington-Kirkpatrick spin glass \citep{SherringtonKirkpatrick1975SK}. The energies $\inbraces{H_j}$ in the REM are taken as i.i.d.~Gaussian random variables, thus abstracting away any underlying configuration geometry. More precisely, the REM partition function $Z_{\text{REM}}(\beta)$ is given by
\begin{align}\label{eqREM_definition}
    Z_{\text{REM}} = \sum_{j = 1}^{2^M}\exp\inparen{-\beta H_j}, \qquad\text{where}\qquad \inbraces{H_j}_{j=1}^{2^M} \overset{\text{iid}}{\sim} \cN(0,M),
\end{align}
where $M$ represents the number of sites in the system, and where $\beta > 0$ is a temperature parameter.  The motivation for  the distribution and scaling of the $2^M$ energy levels in the REM comes from the  Sherrington--Kirkpatrick spin glass in which spins are $\inbraces{\pm 1}$-valued and their energies are Gaussian random variables (albeit with complex correlation structure).

Despite its apparent simplicity, the REM already exhibits a phase transition as the temperature $\beta$ varies. We map the classical REM results into our notation as follows.

\begin{lemma}[\cite{BovierKurkovaLowe2002Fluctuations}, \cite{Bovier2006Disordered} Theorem 9.2.1]\label{lemREM-fluctuations}
Setting $M = \log n$ and $\beta = 1/\sqrt{2c}$ in \eqref{eqREM_definition}, the partition function of the REM \eqref{eqREM_definition} has the following phase transitions.
\begin{enumerate}[(i)]
    \item If $c>\tfrac14$, then $Z_{\text{REM}} = \EE[Z_{\text{REM}}] + o\!\inparen{\EE[Z_{\text{REM}}]}$ whp.
    \item If $c=\tfrac14$, then $Z_{\text{REM}} = \frac{1}{2}\EE[Z_{\text{REM}}] + o\!\inparen{\EE[Z_{\text{REM}}]}$ whp.
    \item If $c<\tfrac14$, then $Z_{\text{REM}} = o\inparen{\EE[Z_{\text{REM}}]}$ whp.
\end{enumerate}
\end{lemma}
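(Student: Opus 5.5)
The plan is to prove Lemma~\ref{lemREM-fluctuations} by the classical truncated first- and second-moment argument for the REM. Write the energies as $-\beta H_j = a\,g_j$ with $g_j$ i.i.d.\ standard Gaussians and $a = \beta\sqrt{M}$. Let $L$ denote the number of energy levels. I read the normalisation as matching Lemma~\ref{lemLR_expansion}: $L = n$ (that is, $2^M$ read as $e^{M}$) and $a = \sqrt{\log n/(2c)} = a_n$. In this scaling $\EE[Z_{\text{REM}}] = n e^{a^2/2}$.

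Fix a truncation level
\[
u = u_n = \sqrt{2\log n} - \frac{\log\log n - \omega_n}{2\sqrt{2\log n}}, \qquad \omega_n \to \infty \text{ slowly},
\]
chosen so that $n\,\Phi^c(u)\to 0$. A union bound then gives $\max_j g_j \le u$ whp. Split $Z_{\text{REM}} = Z_{\le} + Z_{>}$ according to whether $g_j \le u$. Then $Z_> = 0$ whp, and everything reduces to $Z_\le$.

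A Gaussian change of measure gives the truncated first moment:
\[
\EE Z_\le = n e^{a^2/2}\,\Phi(u-a).
\]
Since $u = \sqrt{2\log n}\,(1+o(1))$ and $a = \sqrt{\log n/(2c)}$, the critical point $a=u$ corresponds to $c = \tfrac14$. The three cases then read as follows.
\begin{enumerate}[(i)]
\item For $c>\tfrac14$, we have $u-a \ge \eps\sqrt{\log n}$, so $\Phi(u-a)\to 1$.
\item For $c=\tfrac14$, we have $u-a = O(\log\log n/\sqrt{\log n})\to 0$, so $\Phi(u-a)\to \tfrac12$.
\item For $c<\tfrac14$, we have $u - a \le -\eps\sqrt{\log n}$, so $\Phi(u-a)\to 0$.
\end{enumerate}
Case (iii) is then finished by Markov's inequality: $Z_\le = o(\EE Z_{\text{REM}})$ whp, and $Z_>=0$ whp.

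For cases (i) and (ii) I will show $\Var(Z_\le) = o\big((\EE Z_\le)^2\big)$ and conclude by Chebyshev. By independence,
\[
\Var(Z_\le) \le n\,\EE\big[e^{2ag}\1\{g\le u\}\big] = n e^{2a^2}\Phi(u-2a).
\]
\begin{itemize}
\item If $2a < u$ (deep high temperature, $c>1$), the ratio is at most $e^{a^2}/(n\Phi(u-a)^2) = n^{1/(2c)-1}\to 0$.
\item If $2a \ge u$, I use the Mills ratio bound $\Phi(u-2a)\le e^{-(2a-u)^2/2}$. Together with $\Phi(u-a)\ge \tfrac12 - o(1)$, this gives
\[
\frac{\Var Z_\le}{(\EE Z_\le)^2} \lesssim \frac{e^{a^2-(2a-u)^2/2}}{n} = \frac{e^{-(a-u)^2+u^2/2}}{n}.
\]
\end{itemize}
The choice of $u$ makes $e^{u^2/2}/n = (\log n)^{-1/2+o(1)}e^{O(\omega_n)}$, which tends to $0$ if $\omega_n$ grows slowly enough. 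The factor $e^{-(a-u)^2}\le 1$ only helps. Hence $Z_\le = (1+o(1))\EE Z_\le$ whp, which yields $(1+o(1))\EE Z_{\text{REM}}$ in case (i) and $(\tfrac12+o(1))\EE Z_{\text{REM}}$ in case (ii).

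The main obstacle is case (ii). There the truncation level must be tuned at the $\log\log n/\sqrt{\log n}$ scale, with two competing requirements. It must be high enough that the maximum lies below $u$ whp, which needs $n\Phi^c(u)\to0$. It must also be low enough that the second moment, which carries the factor $e^{u^2/2}/n$, is negligible. The window for $u$ is narrow, so the Gaussian tail asymptotics must be tracked precisely. The same precision forces $a-u\to 0$, which means $c$ must equal exactly $\tfrac14$ up to $o(1/\log n)$ corrections. If this margin turns out to be too tight, my fallback is to use the sharper Mills ratio bound $\Phi^c(x)\sim \varphi(x)/x$ in the variance estimate. This gains an additional factor $1/(2a-u)\asymp (\log n)^{-1/2}$ and gives extra room in the choice of $\omega_n$.
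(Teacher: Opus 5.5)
Your argument is correct and complete. The paper does not prove this lemma; it imports it from Bovier--Kurkova--L\"owe, where much finer fluctuation results are obtained. Your proof is therefore a self-contained, elementary derivation of just the leading-order claims, which is all the statement asserts. Your reading of $2^M$ as $e^M$ (that is, $n$ levels) is actually necessary. With $2^{\log n}=n^{\log 2}$ levels the critical value would be $c=1/(4\log 2)$, not $\tfrac14$.

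The natural comparison is with the paper's proof of the analogous Proposition~\ref{propZphasetransition}. That proof follows the same truncated second-moment template, but it truncates on the global event $B^c=\{\max_i Y_i<t_n\}$ and computes $\EE[Z^2\mid B^c]$ directly. At $c=\tfrac14$ this forces a second-order Bonferroni expansion to recover the exact constant $\tfrac14=(\tfrac12)^2$ needed for Chebyshev. Your per-summand truncation $Z_\le$, combined with exact independence, makes $\Var(Z_\le)$ simply $n$ times a single-term variance. So no inclusion--exclusion is needed, and the factor $\tfrac12$ comes entirely from $\Phi(u-a)$ in the first moment. The price is that this shortcut uses independence essentially. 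In $G(n,m)$ you would have to control covariances of truncated summands, which is exactly what the paper's local-insensitivity lemmas are for.

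Two small corrections to your side remarks. First, the narrow tuning of $u$ at the $\log\log n/\sqrt{\log n}$ scale is only forced by your crude bound $\Phi^c(x)\le e^{-x^2/2}$. With the sharp Mills ratio you can take $u=\sqrt{2\log n}$ exactly, which is essentially the paper's $t_n$. Then both $n\Phi^c(u)$ and the variance ratio are $O((\log n)^{-1/2})$. Second, the tolerance needed for $\Phi(u-a)\to\tfrac12$ is $c-\tfrac14=o((\log n)^{-1/2})$, not $o(1/\log n)$. That constraint comes from the first moment, not the variance, since $e^{-(a-u)^2}\le 1$ places no constraint there. This is irrelevant to the lemma as stated, since there $c=\tfrac14$ exactly.
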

Note that \cite{Bovier2006Disordered} Theorem 9.2.1 provides much more information by determining the exact fluctuation scales and asymptotic characterizations $Z_{\text{REM}}$ in different regimes of $c$. In particular, it is shown that below the critical threshold $c < \tfrac{1}{4}$, the random variable $Z_{\text{REM}}/\EE[Z_{\text{REM}}]$ has fluctuations of size $\exp(-\omega(\log n))$, while in the critical phase $c = \tfrac{1}{4}$ the random variable $Z_{\text{REM}}/\EE[Z_{\text{REM}}] - \tfrac{1}{2}$ has fluctuations of size $\Theta((\log n)^{-1/2})$, and above the critical threshold $c > \tfrac{1}{4}$, the fluctuations of $Z_{\text{REM}}/\EE[Z_{\text{REM}}]  $ are of size $o\!\inparen{ (\log n)^{-1/2}  }$

Our results show an interesting connection between spin glass phase transitions and detection thresholds for statistical estimation problems. More specifically, the high-temperature REM phase (Lemma \ref{lemREM-fluctuations} (i)) displays a law of large numbers phenomenon for $Z_{\text{REM}}$ (i.e.~the annealed = quenched phase in statistical physics). This coincides with the impossibility of detection phase in Proposition \ref{propZphasetransition} (i). On the other hand, the \emph{condensation} (or low temperature) REM phase (Lemma \ref{lemREM-fluctuations} (iii)) has $Z_{\text{REM}}$ being carried by the extremal energy values. This coincides with the trivial detection phase in Proposition \ref{propZphasetransition} (iii) where, in analogy, the likelihood ratio test is easily able to detect anomalies in the maximum degree.  One can compare this to the behavior of the sparse, disassortative stochastic block model in which the information-theoretic threshold for detection coincides with a condensation phase transition for the corresponding anti-ferromagnetic Ising or Potts model on a random graph~\cite{mossel2015reconstruction,coja2018information}.

We remark that the proof of Proposition \ref{propZphasetransition} does not involve a direct reduction to the REM setting. Instead we have to contend with nontrivial correlations among the centered and standardized degrees $Y_i$ in $G(n,m)$. Our proofs use a truncated second-moment method, similar to the classical REM setting. However, we offer two technical innovations to control the effect of the dependencies. First, we establish a collection of ``local insensitivity" lemmas showing that conditioning on the degree information of one vertex
only perturbs the remaining degree distribution in a negligible way in our parameter range.
Secondly, we bound the relevant truncated moments using Bonferroni inequalities:, interestingly, away from the critical threshold $c \neq \tfrac{1}{4}$, a first-order Bonferroni expansion suffices, while at the critical threshold $c = \frac{1}{4}$ we require a second-order expansion to capture the correct cancellation and obtain concentration.

The classical REM has been generalized substantially, for instance to the  generalized or continuous REMs \citep{DerridaGardner1986GREM,OlivieriPicco1984ThermoREM,CapocacciaCassandroPicco1987ThermoGREM,Ruelle1987Reformulation,BolthausenSznitman1998Cascades,BovierKurkova2004GREM1}. These models consider Gaussian energies \eqref{eqREM_definition} that have a hierarchical correlation structure, in an effort to study replica-symmetry breaking. We leave the exploration of further connections between planted subgraph problems and spin glass models such as the GREM's to  future work.

\section{Simplifying the likelihood ratio}

In this section we prove Lemma \ref{lemLR_expansion}.

\begin{definition}\label{defHypergeometric}
    A random variable $X$ has the \textbf{Hypergeometric} distribution $\Hyp(N,K,n)$ if $X$ is the number of objects with a desired property in a sample without replacement of size $n$ from a population of size $N$, where $K$ objects in the population have that desired property. 
\end{definition}
We collect further properties of hypergeomeric random variables in Appendix \ref{secApproxHypergeom}.

\begin{proof}[Proof of Lemma \ref{lemLR_expansion}]
Given a graph $G$, the probability mass functions under the null and alternative are
\[
    \ud \PP_0(G) = \frac{1}{\binom{N}{m}}, \qquad\text{and}\qquad \ud \P_1(G)
    = \frac{ 1}
       {n\binom{n-1}{k}\binom{N-k}{m-k}} \sum_{i=1}^n \binom{d_i}{k}.
\]
Therefore, the likelihood ratio is, with $\mu$ defined in \eqref{eqMuSigDef},
\begin{align}
\label{eq:LR_exact}
\Lambda(G)
    &= \frac{\binom{N}{m}}{n\binom{n-1}{k}\binom{N-k}{m-k}}
      \sum_{i=1}^n \binom{d_i}{k}  = \frac{(N)_k}{(m)_k}\,\frac{1}{n(n-1)_k}
      \sum_{i=1}^n (d_i)_k  = \frac{(N)_k}{(m)_k}\,\frac{\mu^k}{n(n-1)_k}
      \sum_{i=1}^n \frac{(d_i)_k}{\mu^k}.
\end{align}
where $(a)_b$ denotes the falling factorial. Note that
\begin{align} \label{eq:pre_taylor_LR}
\frac{1}{n}\sum_{i=1}^n \frac{(d_i)_k}{\mu^k}
&=
\frac{1}{n}\sum_{i=1}^n \prod_{j=0}^{k-1}\frac{d_i-j}{\mu}
=
\frac{1}{n}\sum_{i=1}^n \prod_{j=0}^{k-1}\Bigl(1+\frac{d_i-\mu-j}{\mu}\Bigr) \nonumber \\[0.6em]
&=
\frac{1}{n}\sum_{i=1}^n
\exp \Biggl\{\sum_{j=0}^{k-1}\log\Bigl(1+u_{i,j}\Bigr)\Biggr\},
\qquad \text{where }\,
u_{i,j}:=\frac{d_i-\mu-j}{\mu}.
\end{align}
By a union bound, Lemma \ref{lemHypTailToGaussianCDF}, and Mills ratio for the Gaussian tails, we have that whp under $\PP_0$,
\begin{align} \label{high_prob_event}
\max_{1\le i\le n}|d_i-\mu|
\le
10\,\sigma\sqrt{\log n}.
\end{align}
This same event also holds whp under $\PP_1$: by a union bound, it remains to argue for the degree of the planted center $\PSCenterVertex$. We have
\[
\PP[d_{\PSCenterVertex} - \mu > 10 \sigma \sqrt{\log n}] = \PP\insquare{\frac{d_{v_*} - \mu - k}{\sigma} > 10\sqrt{\log n} - \frac{k}{\sigma}} = (1+o(1))\Phi^c\inparen{10\sqrt{\log n} - \frac{k}{\sigma}},
\]
and the right-hand side is $O\!\inparen{\frac{1}{\sqrt{\log n}}}$ by Mill's ratio. The lower tail is argued similarly.

On the event \eqref{high_prob_event},
\[
|u_{i,j}|
\le
\frac{|d_i-\mu|+j}{\mu}
\le
\frac{10\sigma\sqrt{\log n}+k}{\mu} = o(1).
\]
as $k \gg (\log n)^2$. Thus, we can Taylor expand $\sum_{j=0}^{k-1}\log(1+u_{i,j})$ as follows:
\begin{align}
\sum_{j=0}^{k-1}\log(1+u_{i,j})
=
\underbrace{\sum_{j=0}^{k-1}u_{i,j}}_L
-\frac12\underbrace{\sum_{j=0}^{k-1}u_{i,j}^2}_Q
+ o(1)
\end{align}
where the $o(1)$ is because the remainder satisfies
\[
\sum_{j=0}^{k-1}O(u_{i,j}^3)
=
O\!\left(k\max_j|u_{i,j}|^3\right)
=
O\!\left(\frac{(\log n)^3}{k^2}\right) = o(1)
\]
again using $k\gg(\log n)^{2}$. Furthermore, for the quadratic contribution $Q$, we have
\begin{align} \label{Q}
Q
&:= \sum_{j=0}^{k-1} u_{i,j}^2 = \frac{k(d_i-\mu)^2}{\mu^2}
-\frac{k(k-1)(d_i-\mu)}{\mu^2}
+\frac{(k-1)k(2k-1)}{6\mu^2} \nonumber\\
&= \frac{k\sigma^2}{\mu^2}\,Y_i^2
-\frac{k(k-1)\sigma }{\mu^2} Y_i
+\frac{(k-1)k(2k-1)}{6\mu^2} \nonumber \\
&= o(1)
\end{align}
as on the event described in \eqref{high_prob_event}, the leading contribution here is $
\frac{k\sigma^2}{\mu^2}\,Y_i^2$
which is of order $(\log n)^2/k$ uniformly in $i$. Because $(\log n)^2 \ll k\ll \sqrt{n}$, the entire line is $o(1)$ uniformly. Now, for the linear term $L$:
\begin{align} \label{L}
L &= \sum_{j=0}^{k-1}u_{i,j} =
\frac{1}{\mu}\sum_{j=0}^{k-1}\bigl(d_i-\mu-j\bigr)=
\frac{k(d_i-\mu)}{\mu}-\frac{k(k-1)}{2\mu} = a Y_i - \frac{a^2}{2} + o(1)
\end{align}
Thus, by \eqref{Q} and \eqref{L}, we obtain
\[
\sum_{j=0}^{k-1}\log(1+u_{i,j})
=
a_n Y_i-\frac{a_n^2}{2}+o(1).
\]
implying
\[
\Lambda(G) = \frac{1}{n}\sum_{i=1}^n \frac{(d_i)_k}{\mu^k}
=
(1+o(1))\,
\frac{1}{n}\sum_{i=1}^n
\exp\!\left(a_n Y_i-\frac{a_n^2}{2}\right).
\]
\end{proof}

\section{Condensation phase transitions for the likelihood ratio}

In this section we prove Proposition \ref{propZphasetransition}. Throughout, we consider only the null model $\PP_0$ so that all expectations and probabilities are taken with respect to $\PP_0$, unless otherwise stated.

\begin{proof}[Proof of Proposition \ref{propZphasetransition}]
By Lemma \ref{lemLR_expansion} we have the following representation of the likelihood ratio: 
\begin{equation*}
\Lambda(G)
=
(1+o(1))\,
e^{-a_n^2/2}\,
\frac{1}{n}\sum_{i=1}^n e^{a_n Y_i}, \qquad a_n = \frac{k\sigma}{\mu} = (1 + o(1))  \sqrt{\frac{\log n}{2c}}
\end{equation*}
We will prove the result on a regime-by-regime basis. Our approach is an extension of the truncated second moment method used for the classical REM (c.f.~\cite{Bovier2006Disordered} Theorem 9.2.1). Let $Z_n = \sum_{i=1}^{n} e^{a_n Y_i}$. To reduce clutter, we often write $Z := Z_n$ and $a := a_n$.

The following shorthands are used throughout this proof:
\[
M_1:=\E\big[e^{aY_1}\big],
\quad
M_2:=\E\big[e^{2aY_1}\big],
\quad
M_{12}:=\E\big[e^{a(Y_1 + Y_2)}\big].
\]
Note by linearity of expectation that
\[
\E[Z]=nM_1,
\qquad
\E[Z^2]=nM_2+n(n-1)M_{12}.
\]
We first compute the asymptotics of $M_1$, $M_2$, and $M_{12}$. By Lemma \ref{lemHyp_Cum_CGF_MGF_Bound}, we have
\begin{equation}\label{eq:M1M2}
M_1=\exp\!\left(\frac{a^2}{2}\right)(1+o(1))=n^{\frac{1}{4c}}(1+o(1)),
\quad
M_2=\exp(2a^2)(1+o(1))=n^{\frac{1}{c}}(1+o(1)).
\end{equation}
It remains to compute $M_{12}$. Define $A_{12} = \1\!\inbraces{\inbraces{1,2} \in G}$. Note that $\PP[A_{12} = 1] = p = m/N$. Define $\cE_{12}$ to be the set of possible incident edges to vertices $1$ and $2$, excluding $\inbraces{1,2}$ i.e.
\[
\cE_{12} = {\{1,j\}:j\ge3\}\cup\{\{2,j\}:j\ge3\} }.
\]
Note that $\abs{\cE} = 2n - 4$. Conditionally on $A_{12} = b \in \{ 0,1\}$, define 
\[
H_b:=  G \cap \cE_{12}.
\]
Then
\[
H_b\sim\Hyp(N-1,\;2n-4,\;m-b),
\qquad
d_1+d_2=H_b+2b,
\]
and thus, writing $\lambda = a/\sigma$ for short,
\begin{align*}
M_{12}
&=
e^{-2\lambda\mu}
\Big[
(1-p)\,\E[e^{\lambda H_0}]
+
p\,e^{2\lambda}\,\E[e^{\lambda H_1}]
\Big] \\
&= 
e^{-2\lambda\mu}
\Big[
(1-p)\,\exp(2\lambda\mu+a^2+o(1))
+
p\,e^{2\lambda}\,\exp(2\lambda\mu+a^2+o(1))
\Big] \\
&=
e^{-2\lambda\mu}\,\exp(2\lambda\mu+a^2+o(1))\Big[(1-p)+p e^{2\lambda}\Big] \\
&=
\exp\!\left(a^2+o(1)\right)\Big[1+p\big(e^{2\lambda}-1\big)\Big] \\
&=
\exp\!\left(a^2+o(1)\right)\,(1+o(1))
=
e^{a^2}(1+o(1))
=
n^{\frac{1}{2c}}(1+o(1)).
\end{align*}
where the second equality follows from Lemma \ref{lemHyp_Cum_CGF_MGF_Bound}.
\paragraph{Case $c>\tfrac12$}
In this regime, the (vanilla) second moment method suffices:
\[
\frac{\Var(Z)}{(\E Z)^2}
=
\frac{\E[Z^2]}{(\E Z)^2}-1
\le
\frac{M_2}{nM_1^2}+o(1)
=
n^{-1+\frac{1}{2c}}(1+o(1)).
\]
If $c>\frac12$ then $-1+\frac{1}{2c}<0$, so the ratio is $o(1)$.
By Chebyshev's inequality,
\[
Z=\E Z +o(\E Z)
\qquad\text{whp}.
\]
\paragraph{Case $c\leq\tfrac12$} We will now proceed to the truncated second moment method. Define the truncation threshold and event:
\[
t_n:=\sqrt{2\log n}(1 + o(1)),
\qquad
B:=\Big\{\max_{1\le i\le n} Y_i\ge t_n\Big\}.
\]
Note that $B^c = \Big\{\max_i Y_i<t_n\Big\}$ is a high probability event: by union bound, Lemma \ref{lemHypTailToGaussianCDF}, and Mill's ratio
\begin{align}\label{eqHyp_tn_MillsRatioBound}
\PP(B) &\le n\,\PP(Y_1\ge t_n) \leq \frac{(1+o(1))}{\sqrt{\log n}}.
\end{align}
To complete the proof, we will study the behavior of $Z \mid B^c$. The conditional first moment can be upper bounded by
\begin{align}
\E[Z\mathbf 1_B]
&=
\E\!\left[\sum_{i=1}^n e^{aY_i}\mathbf 1_B\right]
\le
\E\!\left[\sum_{i=1}^n e^{aY_i}\sum_{j=1}^n \mathbf 1_{\{Y_j\ge t_n\}}\right] \nonumber \\
&=
\underbrace{n\,\E\!\left[e^{aY_1}\mathbf 1_{\{Y_1\ge t_n\}}\right]}_{S_{\text{diag}}}
+
\underbrace{n(n-1)\,\E\!\left[e^{aY_1}\mathbf 1_{\{Y_2\ge t_n\}}\right]}_{S_{\text{ off}}}.
\end{align}
Let $\QQ$ denote the distribution of $Y_1$ under $\PP_0$. 
For each $a = \Theta(t_n)$, define the tilted measure $\QQ^{(a)}$ which has density $\frac{\ud \QQ^{(a)}}{\ud \QQ}(y) = e^{ay}/\EE[e^{aY_1}]$ with respect to $\QQ$. We have
\[
S_{\text{diag}} = n\E\!\left[e^{aY_1}\mathbf 1_{\{Y_1\ge t_n\}}\right] = nM_1\cdot \QQ^{(a)}(Y_1\ge t_n) = (1+o(1)) n M_1 \cdot \Phi^c(t_n - a_n),
\]
where the last equality follows from Lemma \ref{lemHypTailToGaussianCDF}.  For $S_{\text{off}}$, we have (by \ref{lem:mgf-stab-d2}) and Lemma \ref{lemHypTailToGaussianCDF} again
\begin{align} 
S_{\text{off}} &= n(n-1)\E\!\left[e^{aY_1}\mathbf 1_{\{Y_2\ge t_n\}}\right] \nonumber = n(n-1)
M_1(1+o(1))\PP(Y_2\ge t_n) \nonumber = (1 +o(1))n^2M_1 \Phi^c(t_n).
\end{align}
By \eqref{eqHyp_tn_MillsRatioBound} and $\E Z = nM_1 = n^{1+\frac{1}{2c}}(1+o(1))$ we see that $S_{\text{off}} = O\!\inparen{\frac{1}{\sqrt{\log n}}}$. Note that 
\[
\E\!\left[Z\1_B\right]
\;\ge\;
\sum_{i=1}^n \E\!\left[e^{aY_i}\,\mathbf 1_{\{Y_i\ge t_n\}}\right]
\;=\;
n\,\E\!\left[e^{aY_1}\,\mathbf 1_{\{Y_1\ge t_n\}}\right]
\;=\;
S_{\rm diag}.
\]
It follows that $S_{\text{diag}} \leq \EE[Z \1_B] \leq S_{\text{diag}} + O\!\inparen{\frac{1}{\sqrt{\log n}}}$. Applying the fact that $\E [Z | B^c] = (1 + o(1))(\E[Z] - \E[Z\1_B])$ yields 
\begin{align}
\E[Z | B^c] =
\begin{cases}
\E[Z] + o(\E[Z]), & \text{if } \tfrac14 < c \leq \tfrac12,\\
\tfrac12 \E[Z] + o(\E[Z]), & \text{if } c = \tfrac14,\\
o(\E[Z]), & \text{if } 0 < c < \tfrac14,
\end{cases}
\end{align}
with explanations as follows.
\begin{itemize}
\item If $\tfrac14 < c \leq \tfrac12$, then $t_n-a_n\to +\infty$. Hence $\Phi^c(t_n-a_n)=o(1)$ and $\E[Z\mathbf 1_B]=o(\E Z)$. It follows that $\E[Z\mid B^c]=\E Z+o(\E Z)$).
\item If $c=\tfrac14$, then $t_n-a_n\to 0$. Hence $\Phi^c(t_n-a_n)\to \tfrac12$ and $\E[Z\mathbf 1_B]=\tfrac12\,\E Z+o(\E Z)$. It follows that $\E[Z\mid B^c]=\tfrac12\,\E Z+o(\E Z)$).
\item If $0<c<\tfrac14$, then $t_n-a_n\to -\infty$. Hence $\Phi^c(t_n-a_n)\to 1$ and $\E[Z\mathbf 1_B]=\E Z+o(\E Z)$. It follows that $\E[Z\mid B^c]=o(\E Z)$).
\end{itemize}
Therefore, all that remains is to show concentration for $Z \mid B^c$. We will now show this on a regime-by-regime basis.

\paragraph{Concentration in the regime $\tfrac14 < c \leq \tfrac12$.}
Note that
\begin{align}
\E[Z_n^2\mathbf 1_B]
&\le
\sum_{i,j,k=1}^n \E\!\left[e^{aY_i+aY_j}\mathbf 1_{\{Y_k\ge t_n\}}\right] \nonumber\\
&=
n\,T_{111}+n(n-1)\,T_{112}+2n(n-1)\,T_{121}+n(n-1)(n-2)\,T_{123}, \label{eq:EI2B-decomp}
\end{align}
where
\[
T_{111}:=\E\!\left[e^{2aY_1}\mathbf 1_{\{Y_1\ge t_n\}}\right],\quad
T_{112}:=\E\!\left[e^{2aY_1}\mathbf 1_{\{Y_2\ge t_n\}}\right],
\]
\[
T_{121}:=\E\!\left[e^{aY_1+aY_2}\mathbf 1_{\{Y_1\ge t_n\}}\right],\quad
T_{123}:=\E\!\left[e^{aY_1+aY_2}\mathbf 1_{\{Y_3\ge t_n\}}\right].
\]
We will now compute the above four terms.
\begin{enumerate}[(i)]
\item $T_{111}$. By exponential tilting and Lemma \ref{lemHypTailToGaussianCDF}
\begin{align}
T_{111} &= M_2\cdot \QQ^{(2a)}(Y_1\ge t_n) = (1 + o(1))M_2 \Phi^c(t_n - 2a_n).
\end{align}
\item $T_{112}$. By Lemmas \ref{lem:mgf-stab-d2} and \ref{lemHypTailToGaussianCDF} we have that
\begin{align} \label{eq:T112-asym}
T_{112}&=M_2(1+o(1))\,\PP(Y_2\ge t_n) = (1 + o(1))M_2 \Phi^c(t_n)
\end{align}
\item $T_{121}$. Conditioning on $d_1=k$, by Lemmas \ref{lem:mgf-stab-d2} and \ref{lemHypTailToGaussianCDF}
\begin{align}
T_{121}
&= \E\!\left[e^{aY_1+aY_2}\mathbf 1_{\{Y_1\ge t_n\}}\right] =
\sum_{k\ge \mu+\sigma t_n}\PP(d_1=k)\,
e^{\frac{a(k-\mu)}{\sigma}}\,
\E\!\left[e^{a Y_2}\mid d_1=k\right] \nonumber\\
&= (1+o(1)) M_1\,\E\!\left[e^{aY_1}\mathbf 1_{\{Y_1\ge t_n\}}\right] = (1+o(1)) M_1^2\,\QQ^{(a)}(Y_1\ge t_n) \nonumber\\
&= (1 + o(1)) M_1^2  \Phi^c(t_n - a_n).
\end{align}
\item $T_{123}$. By similar conditioning and Lemmas \ref{lem:mgf-stab-d3} and \ref{lemHypTailToGaussianCDF},
\begin{align}\label{eqT123}
T_{123}&= 
\E\!\left[e^{aY_1+aY_2}\1\{Y_3\ge t_n\}\right] 
\nonumber \\
&=\sum_{k\ge \mu +\sigma t_n}\Pr(d_3=k)\,
\E\!\left[\exp\!\bigg(\frac{a}{\sigma}(d_1+d_2-2\mu)\bigg)\,\Big|\, d_3=k\right] \nonumber \\
&=(1+o(1))\,M_{12}\PP\{Y_3 \geq t_n\} \nonumber \\
&= (1+o(1))\,M_{12}\Phi^c(t_n)
\end{align}
\end{enumerate}
For $\tfrac14<c\le \tfrac12$, we have $t_n-2a_n\to-\infty$, and $t_n - a_n \to \infty$, which implies
\begin{align}
nT_{111}
&=(1+o(1))\,nM_2 = (1+o(1))\,nM_2, \notag\\
n(n-1)T_{112}
&= (1+o(1))\,n^2M_2\cdot \frac{n^{-1}}{\sqrt{\log n}} =o(nM_2), \notag\\[0.5em]
2n(n-1)T_{121}
&= o(n^2M_1^2) = o(n^2M_{12}), \notag\\[0.5em]
n(n-1)(n-2)T_{123}
&= (1+o(1))\,n^3M_{12}\cdot \frac{n^{-1}}{\sqrt{\log n}}  = o(n^2M_{12}). 
\end{align}
Because we have the lower bound $\E[Z_n\1_B] \geq nT_{111} = (1 + o(1))nM_2$, we obtain
\[
\E[Z_n^2 \mid B^c]
= 
(1 + o(1))\Big(\E[Z_n^2]-\E[Z_n^2\mathbf 1_B]\Big)
=
(1 + o(1))n^2M_{12}
\]
which gives
\begin{align}
\frac{\Var(Z_n \mid B^c)}{\E[Z_n \mid B^c]^2}
&=\frac{\E[Z_n^2 \mid B^c]}{\E[Z_n \mid B^c]^2}-1 =\left(1+o(1)\right)\frac{n^{1/(2c)}}{n^{1/(2c)}}-1 =o(1) \notag
\end{align}
implying concentration by Chebyshev's inequality.

\paragraph{Concentration in the regime $c=\tfrac14$.}

In the first regime, we were able to show concentration via a ``first-order inclusion-exclusion" expansion. In this critical regime, we must expand to the second order. This is because
\[
\E[Z_n\mid B^c]=\tfrac12\,\E[Z_n]+o(\E[Z_n])
\]
so in order to get the correct cancellation for Chebyshev's, we must recover the constant on the second conditional moment. Since $Z_n=\sum_{i=1}^n e^{aY_i}$, we have
\[
Z_n^2=\sum_{i=1}^n e^{2aY_i}+\sum_{i\ne j}e^{aY_i+aY_j},
\]
and thus
\begin{equation}\label{eq:EI2Bc-split}
\E[Z_n^2\mathbf 1_{B^c}]
=
\underbrace{\sum_{i=1}^n \E\!\left[e^{2aY_i}\mathbf 1_{B^c}\right]}_{=:D}
+
\underbrace{\sum_{i\ne j}\E\!\left[e^{aY_i+aY_j}\mathbf 1_{B^c}\right]}_{=:O}.
\end{equation}
By linearity of expectation, these diagonal and off-diagonal terms are
\begin{equation}\label{eq:Dbc-Obc}
D=n\,\E\!\left[e^{2aY_1}\mathbf 1_{B^c}\right],
\qquad
O=n(n-1)\,\E\!\left[e^{aY_1+aY_2}\mathbf 1_{B^c}\right].
\end{equation}
\begin{enumerate}[(i)]
    \item We give an upper bound for term $D$. Since $B^c\subseteq\{Y_1<t_n\}$, by Lemma \ref{lemHypTailToGaussianCDF}
\begin{align}
\E\!\left[e^{2aY_1}\mathbf 1_{B^c}\right]
&\le \E\!\left[e^{2aY_1}\mathbf 1_{\{Y_1<t_n\}}\right]
= M_2 \QQ^{(2a)}(Y_1<t_n) \nonumber\\
&=(1+o(1))\,M_2\,\Phi(t_n-2a_n) \nonumber\\
&=(1+o(1))\,M_2\,\Phi(-(1+o(1))\sqrt{\log n}) \nonumber \\
&= (1 + o(1))M_2 \Phi^c((1+o(1))\sqrt{\log n})
\end{align}
By Mill's ratio for the Gaussian tail, we get
\begin{equation}\label{eq:D-negligible}
D
\le
(1+o(1))\,nM_2\cdot \frac{n^{-1}}{\sqrt{\log n}}
=
\frac{1+o(1)}{\sqrt{\log n}}\,M_2
=
o(n^2M_{12}),
\end{equation}
with the last equality from $M_2=n^{4}(1+o(1))$ and $n^2M_{12}=n^4(1+o(1))$ when $c=\tfrac14$.

\item We compute term $O$. Write
\[
B = \Big(\{Y_1\ge t_n\}\cup\{Y_2\ge t_n\}\Big)\ \cup\ \Big(\bigcup_{k=3}^n\{Y_k\ge t_n\}\Big),
\]
and denote $B_{12}:=\{Y_1\ge t_n\}\cup\{Y_2\ge t_n\}$ and $C_{12}:=\bigcup_{k=3}^n\{Y_k\ge t_n\}$.
Then
\[
\mathbf 1_B=\mathbf 1_{B_{12}}+\mathbf 1_{C_{12}}-\mathbf 1_{B_{12}\cap C_{12}}
\quad\Longrightarrow\quad
\mathbf 1_B=\mathbf 1_{B_{12}}+O(\mathbf 1_{C_{12}}).
\]
and therefore, 
\begin{equation}\label{eq:B-reduction}
\E\!\left[e^{aY_1+aY_2}\mathbf 1_B\right]
=
\E\!\left[e^{aY_1+aY_2}\mathbf 1_{B_{12}}\right]
+
O\!\left(\E\!\left[e^{aY_1+aY_2}\mathbf 1_{C_{12}}\right]\right).
\end{equation}
Applying a union bound and noting that the resulting term matches $T_{123}$ (in \eqref{eqT123}), and using Lemma \ref{lemHypTailToGaussianCDF} gives
\begin{align}
\E\!\left[e^{aY_1+aY_2}\mathbf 1_{C_{12}}\right]
&\le \sum_{k=3}^n \E\!\left[e^{aY_1+aY_2}\mathbf 1_{\{Y_k\ge t_n\}}\right]
=(n-2)\,T_{123} \nonumber\\
&=(1+o(1))\,(n-2)\,M_{12}\PP(Y_3\ge t_n) \nonumber \\
&= (1 + o(1))nM_{12} \Phi^c(t_n) 
\nonumber \\
&= o(M_{12}). \label{eq:C12-small}
\end{align}
Combining \eqref{eq:B-reduction} and \eqref{eq:C12-small} gives
\begin{equation}\label{eq:B-reduction-final}
\E\!\left[e^{aY_1+aY_2}\mathbf 1_B\right]
=
\E\!\left[e^{aY_1+aY_2}\mathbf 1_{B_{12}}\right]
+o(M_{12}).
\end{equation}

\noindent By inclusion-exclusion, we have $\mathbf 1_{B_{12}}=\mathbf 1_{\{Y_1\ge t_n\}}+\mathbf 1_{\{Y_2\ge t_n\}}-\mathbf 1_{\{Y_1\ge t_n,Y_2\ge t_n\}}$, which means
\begin{equation}\label{eq:bonferroni}
\E\!\left[e^{aY_1+aY_2}\mathbf 1_{B_{12}}\right]
=
2\,\E\!\left[e^{aY_1+aY_2}\mathbf 1_{\{Y_1\ge t_n\}}\right]
-
\E\!\left[e^{aY_1+aY_2}\mathbf 1_{\{Y_1\ge t_n,\;Y_2\ge t_n\}}\right]
\end{equation}
by symmetry. Observing that the first term is $2T_{121}$, we compute
\begin{align}\label{eq:T121-half}
T_{121}
&=
(1+o(1)) M_1^2\,\PP(Y_1\ge 0) = (1 + o(1))M_1^2 \Phi^c(0)
=(1+o(1))
\tfrac12\,M_1^2
\end{align}
where the second equality follows from the fact that $t_n-a_n \rightarrow0$ when $c=\tfrac14$. For the intersection term, define
\[
J_{12}:=\E\!\left[e^{aY_1+aY_2}\mathbf 1_{\{Y_1\ge t_n,\;Y_2\ge t_n\}}\right].
\]
Conditioning on the degree $d_1 = k$, we get, writing $\lambda = \frac{a}{\sigma}$ for short,
\begin{align}
J_{12}
&=\E\!\left[e^{aY_1+aY_2}\mathbf 1_{\{Y_1\ge t_n,\;Y_2\ge t_n\}}\right] \nonumber\\
&=\sum_{k\ge \mu+\sigma t_n}
\E\!\left[e^{aY_1+aY_2}\mathbf 1_{\{Y_1\ge t_n,\;Y_2\ge t_n\}}\;\middle|\; d_1=k\right]\PP(d_1=k) \nonumber\\
&=\sum_{k\ge \mu+\sigma t_n}
e^{\lambda(k-\mu)}\,
\E\!\left[e^{\lambda(d_2-\mu)}\mathbf 1_{\{d_2\ge \mu+\sigma t_n\}}\;\middle|\; d_1=k\right]\PP(d_1=k) \nonumber\\
&=(1 + o(1)) \E\!\left[e^{\lambda(d_2-\mu)}\mathbf 1_{\{d_2\ge \mu+\sigma t_n\}}\right] \sum_{k\ge \mu+\sigma t_n}
e^{\lambda(k-\mu)}\PP(d_1=k) \nonumber\\
&= (1 + o(1))\E\!\left[e^{aY_1}\mathbf 1_{\{Y_1 \geq t_n\}}\right]^2 \nonumber \nonumber \\
&= (\tfrac14 + o(1))M_1^2 \nonumber \\
&= (\tfrac14 + o(1))M_{12}
\label{eq:J12-chain}
\end{align}
Putting everything together and substituting gives
\begin{align}
\E\!\left[e^{aY_1+aY_2}\mathbf 1_{B^c}\right]
&=
\E[e^{aY_1+aY_2}]
-
\E\!\left[e^{aY_1+aY_2}\mathbf 1_B\right] \nonumber\\
&=
M_{12}-\left(\tfrac34+o(1)\right)M_{12}
=
\left(\tfrac14+o(1)\right)M_{12}. \label{eq:pair-on-Bc}
\end{align}
Therefore,
\begin{equation}\label{eq:O-leading}
O
=
n(n-1)\,\E\!\left[e^{aY_1+aY_2}\mathbf 1_{B^c}\right]
=
\left(\tfrac14+o(1)\right)n^2M_{12}.
\end{equation}
\end{enumerate}
Combining \eqref{eq:EI2Bc-split}, \eqref{eq:D-negligible}, and \eqref{eq:O-leading} gives
\begin{equation}\label{eq:EI2Bc-final}
\E[Z_n^2\mathbf 1_{B^c}]
=
\left(\tfrac14+o(1)\right)n^2M_{12}.
\end{equation}
Since $\PP(B^c)=1-o(1)$, we conclude
\[
\E[Z_n^2\mid B^c]
=
\left(\tfrac14+o(1)\right)n^2M_{12}.
\]
and the result follows by Chebyshev's inequality.
\paragraph{Concentration in the regime $0<c<\tfrac14$.}

From the first-moment computation already obtained above,
\[
\E[Z_n\mid B^c]=o(\E[Z_n]).
\]
Since $Z_n\ge 0$, Markov's inequality on $B^c$ gives, for any fixed $\varepsilon>0$,
\[
\PP\!\left(\left.Z_n>\varepsilon\,\E[Z_n]\ \right|\ B^c\right)
\le
\frac{\E[Z_n\mid B^c]}{\varepsilon\,\E[Z_n]}
=
o(1).
\]
Therefore
\[
\PP\!\left(Z_n>\varepsilon\,\E[Z_n]\right)
\le
\PP(B)+\PP\!\left(\left.Z_n>\varepsilon\,\E[Z_n]\ \right|\ B^c\right)
=
o(1),
\]
since $\PP(B)=o(1)$.
As $\varepsilon>0$ is arbitrary, this is exactly
\[
I_n=o(\E[I_n])\qquad\text{w.h.p.}
\]
as desired.

\end{proof}

\section{Total variation bounds}

In this section we furnish the proofs of the main results. Throughout, we assume that the condition \eqref{eqAsmpt_on_k} on $k$ is in force, and that we are operating in the critical scaling window for $m$ in \eqref{eqScalingWindowM}.

\subsection{A lower bound}\label{secLowerBound}

In this section we establish the easier direction of Theorem \ref{thmMainResult_TV} by exhibiting a test that achieves the lower bound. The maximum degree test $\phi_n : \inbraces{0,1}^{N} \rightarrow \inbraces{0,1}$ is defined (for $t^*$ in \eqref{eqThresholdDefn}) by
\begin{align}
    \phi_n(G) = \1\!\inbraces{\Delta \geq t^*}
\end{align}
That is, $\phi_n$ returns $0$ (resp.~$1$) if the result is that $G \sim \PP_0$ (resp.~$G \sim \PP_1$). 

\begin{proposition}\label{propTV_lowerbound}
The Type-I and Type-II errors of the maximum degree test satisfy
\[
    \PP_0\inparen{\phi_n(G) = 1} = o(1), \qquad\text{and}\qquad \PP_1\inparen{\phi_n(G) = 0} = \Phi\Big(\frac{\gamma}{\sqrt{2}}\Big).
\]
Consequently, 
\[
\TV(\PP_0,\PP_1)\;\ge\; 1-\Phi\Big(\frac{\gamma}{\sqrt{2}}\Big)+o(1).
\]
\end{proposition}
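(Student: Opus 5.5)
Both error bounds follow from Lemma~\ref{lem:maxdeg_short}, plus a Gaussian computation for the hub degree. The Type-I error $\PP_0(\phi_n(G)=1)=\PP_0(\Delta\ge t^*)$ is $o(1)$ by the first bullet of that lemma. For the Type-II error, the second bullet gives
\[
\PP_1(\phi_n(G)=0)=1-\PP_1(d_{\PSCenterVertex}\ge t^*)+o(1),
\]
so it remains to compute $\PP_1(d_{\PSCenterVertex}<t^*)$. The TV bound then follows from the elementary inequality $\TV(\PP_0,\PP_1)\ge \PP_1(\phi_n=1)-\PP_0(\phi_n=1)$, valid for any test. This gives $1-\Phi(\gamma/\sqrt2)+o(1)$; the statement's exact equality for the Type-II error should be read up to $o(1)$.

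Under $\PP_1$, $d_{\PSCenterVertex}=k+H$, where $H\sim\Hyp(N-k,\,n-1-k,\,m-k)$ counts the unplanted edges at the hub. Its mean is $\mu'=(n-1-k)(m-k)/(N-k)=\mu-O(k\mu/n)-O(k/n)=\mu+o(1)$, using $k\ll\sqrt n$. Its variance is $\sigma'^2=\sigma^2(1+O(k/n))$. By Lemma~\ref{lemHypTailToGaussianCDF}, which gives the Gaussian approximation at the $O(1)$-standard-deviation scale we need,
\[
\PP_1(d_{\PSCenterVertex}<t^*)=\Phi\Big(\frac{t^*-\mu-k}{\sigma}\Big)+o(1).
\]
It therefore suffices to show $(t^*-\mu-k)/\sigma\to\gamma/\sqrt2$.

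\textbf{Main step: the scaling calculation.} Note $\sqrt{(2m/n)(1-m/N)}=\sigma(1+o(1/\log n))$ and $2m/n=\mu+O(m/N)$. Hence
\[
\frac{t^*-\mu}{\sigma}=\sqrt{2\log n-\alpha^{-1}\log\log n}+o(1)=\sqrt{2\log n}+o(1),
\]
since $\log\log n/\sqrt{\log n}\to0$. The relative error $o(1/\log n)$ in $\sigma$, multiplied by $\sqrt{\log n}$, is negligible. Next, $k/\sigma=k\sqrt{n/(2m)}(1+o(1))$, and the leading term must be tracked to relative precision $o(1/\sqrt{\log n})$. Substituting \eqref{eqScalingWindowM},
\[
\frac{2m}{n}=\frac{k^2}{2\log n}\Big(1+\frac{\gamma}{\sqrt{\log n}}\Big),
\qquad\text{so}\qquad
\frac{k}{\sqrt{2m/n}}=\sqrt{2\log n}\Big(1-\frac{\gamma}{2\sqrt{\log n}}+O\big(\tfrac{1}{\log n}\big)\Big)=\sqrt{2\log n}-\frac{\gamma}{\sqrt2}+o(1).
\]
The corrections from $\sigma^2$ versus $2m/n$ are of relative order $m/N+1/n$, giving an $o(1)$ contribution after multiplying by $\sqrt{\log n}$. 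Subtracting, $(t^*-\mu-k)/\sigma=\gamma/\sqrt2+o(1)$, and continuity of $\Phi$ finishes the argument.

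The only delicate point is this bookkeeping: every multiplicative correction to $\sigma$ and $\mu$ must be $o(1/\sqrt{\log n})$, so that it is not amplified past $o(1)$ by the $\sqrt{2\log n}$ factor. I expect this to hold comfortably because $k\ll\sqrt n$ and $m/N=O(k^2/(n\log n))$. The remaining concern is making sure Lemma~\ref{lemHypTailToGaussianCDF} applies to the slightly shifted hypergeometric parameters of the planted model.
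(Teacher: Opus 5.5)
Your proposal is correct and follows essentially the paper's route: Lemma~\ref{lem:maxdeg_short} for both error terms, a Gaussian approximation for the hub degree $k+H$, and the scaling computation of Fact~\ref{factTStar_and_PSCenter}. You do that computation more carefully than the paper, rightly read the Type-II identity as holding up to $o(1)$, and use $\TV(\PP_0,\PP_1)\ge\PP_1(\phi_n=1)-\PP_0(\phi_n=1)$, where the paper has a sign typo. One small fix and one open point: $\mu-\mu'\approx k\mu/n\approx k^3/(2n\log n)$ is not $o(1)$ when $k\gg(n\log n)^{1/3}$ (Lemma~\ref{lemma:maxDegreeStatistics} makes the same slip), but it is $o(\sigma)$ since $k\mu/(n\sigma)\approx k^2/(n\sqrt{2\log n})\to0$, which is all your standardization needs; and your concern about applying Lemma~\ref{lemHypTailToGaussianCDF} to $H$ is resolved by Corollary~\ref{corrHypStrictlyNegativeRoots}, because the real-rooted PGF makes $H$ a sum of independent Bernoullis with variance $\sigma'^2\to\infty$, so the ordinary Lindeberg CLT gives $\PP_1(d_{\PSCenterVertex}<t^*)=\Phi(\gamma/\sqrt2)+o(1)$.
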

Recall from~\eqref{eqMuSigDef} that $\mu$ and $\sigma^2$ denote the mean and variance of any vertex degree under $H_0$.  
\begin{lemma}\label{lemma:maxDegreeStatistics}
Under the planted distribution $G \sim \PP_1$, the degree of the planted center $\PSCenterVertex$ satisfies 
\begin{equation}\label{eq:planted-center-mean-var}
\E_1[d_{\PSCenterVertex}] \;=\; \mu + k + o(1),
\qquad
\Var_1(d_{\PSCenterVertex})\;=\;\sigma^2(1+o(1)).
\end{equation}
\end{lemma}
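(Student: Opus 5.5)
Under $\PP_1$, condition on the identity of $\PSCenterVertex$ and on the $k$ planted edges. The remaining $m-k$ edges are a uniform sample without replacement from the $N-k$ unplanted pairs. Among these pairs, exactly $n-1-k$ are incident to $\PSCenterVertex$ and not already planted. The center's degree therefore decomposes as
\[
d_{\PSCenterVertex} = k + X, \qquad X \sim \Hyp(N-k,\; n-1-k,\; m-k).
\]
The plan is to read off the mean and variance of $X$ from the standard hypergeometric formulas and compare them with $\mu$ and $\sigma^2$ from \eqref{eqMuSigDef}.

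\textbf{Mean.} We have
\[
\E_1[X] = \frac{(n-1-k)(m-k)}{N-k}.
\]
Writing $\mu = (n-1)m/N$, the difference splits into three corrections:
\[
\E_1[X]-\mu = -\frac{k(m-k)}{N-k} - \frac{(n-1)k}{N-k} + (n-1)m\Big(\frac{1}{N-k}-\frac1N\Big).
\]
With $p = m/N$, these are of orders $kp$, $k/n$ and $npk/N$ respectively.

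In the window \eqref{eqScalingWindowM} we have $p \asymp k^2/(n\log n)$. Since $k\ll\sqrt n$, this gives $p \ll 1/\log n$, and more sharply $kp \asymp k^3/(n\log n)$. This last term is $o(1)$ only when $k^3 \ll n\log n$. I expect this to be the one genuinely delicate point: for $k$ near $\sqrt n$, the correction $kp$ is not $o(1)$.

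What I would actually do first is verify the exact correction, which is $-kp(1+o(1))$, and note that $kp = o(\sigma)$. This holds because $\sigma \asymp k\sqrt{n/\log n}\cdot n^{-1/2}\cdot\ldots$; more cleanly, $\sigma^2 \sim 2m/n \asymp k^2/\log n$, so $\sigma \asymp k/\sqrt{\log n}$, and $kp \ll \sigma$ since $p \ll 1/\sqrt{\log n}$. This weaker statement, an $o(\sigma)$ or even $o(\sigma/\sqrt{\log n})$ error, is all that the later Gaussian comparison needs. So I would prove the $o(1)$ claim when $k^3\ll n\log n$, and otherwise record the mean as $\mu+k-kp(1+o(1))$ with $kp = o(\sigma/\sqrt{\log n})$. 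The terms $k/n$ and $npk/N \asymp pk/n$ are trivially $o(1)$.

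\textbf{Variance.} The standard formula gives
\[
\Var(X) = (n-1-k)\,p'(1-p')\,\frac{N-k-(m-k)}{N-k-1}\cdot\frac{\ldots}{\ldots},
\]
with $p' = (m-k)/(N-k)$. More simply, it is $K\frac{s}{P}\big(1-\frac{K}{P}\big)\frac{P-s}{P-1}$ with $P=N-k$, $K=n-1-k$, $s=m-k$. Each factor is a $(1+o(1))$ perturbation of the corresponding factor for $\Hyp(N,n-1,m)$:
\[
K/(n-1) = 1-O(k/n), \qquad s/m = 1-O(k/m), \qquad P/N = 1-O(k/N),
\]
and $(P-s)/P$ differs from $(N-m)/N$ by $O(k/N)$. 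Hence $\Var_1(d_{\PSCenterVertex}) = \Var(X) = \sigma^2(1+O(k/n))=\sigma^2(1+o(1))$, since the deterministic shift $k$ does not affect the variance.

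The variance part is thus routine; the only obstacle is the size of the mean correction $kp$, which I would handle as described above.
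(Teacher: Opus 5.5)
Your decomposition $d_{\PSCenterVertex}=k+X$ with $X\sim\Hyp(N-k,\,n-1-k,\,m-k)$, followed by the standard hypergeometric mean and variance formulas, is exactly the paper's proof (the paper only says ``direct computation''). Your variance comparison, giving $\sigma^2(1+O(k/n))$, is correct.

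Your caution about the mean is also correct, and it exposes a genuine misstatement in the lemma. Exactly,
\[
\E_1[d_{\PSCenterVertex}]-\mu-k=\frac{k}{N-k}\Bigl(k-(n-1)-m\bigl(1-\tfrac{n-1}{N}\bigr)\Bigr)=-kp\,(1+o(1))\sim-\frac{k^3}{2n\log n}
\]
in the window \eqref{eqScalingWindowM}. This diverges once $k\gg(n\log n)^{1/3}$, so the claimed $o(1)$ error cannot hold on the whole range \eqref{eqAsmpt_on_k}.

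The version you prove has error of order $kp$, with $kp/\sigma\sim k^2/(\sqrt2\,n\sqrt{\log n})\to0$. That is all that Fact~\ref{factTStar_and_PSCenter} and Proposition~\ref{propTV_lowerbound} actually use, since there the mean only enters up to $o(\sigma)$.
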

\begin{proof}
The degree of the planted hub vertex can be written as 
\begin{equation}\label{eq:planted-center-law}
d_{\PSCenterVertex} \;=\; k + H,\qquad H\sim \Hyp(N-k,\;n-1-k,\;m-k).
\end{equation}
The result then follows by direct computation.
\end{proof}
\begin{fact}\label{factTStar_and_PSCenter}
    In the critical scaling window \eqref{eqScalingWindowM}, the threshold $t^*$ in \eqref{eqThresholdDefn} satisfies
    \[
    t^* = \EE_1[d_{\PSCenterVertex}] + \frac{\gamma}{\sqrt{2}}\sigma + o(\sigma).
    \]
\end{fact}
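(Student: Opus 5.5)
The plan is a direct asymptotic expansion: write both $t^*$ and $\E_1[d_{\PSCenterVertex}]$ as $\mu$ plus a multiple of $\sigma$, and compare the coefficients up to an $o(1)$ error. Two preliminary observations set this up.

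\begin{itemize}
\item The mean is exact: $\mu=(n-1)p=(n-1)m/N=2m/n$. So the first term of $t^*$ in \eqref{eqThresholdDefn} is exactly $\mu$, and by Lemma \ref{lemma:maxDegreeStatistics} we get $\E_1[d_{\PSCenterVertex}]=\mu+k+o(1)$. Since $\sigma\to\infty$, the $o(1)$ here is $o(\sigma)$.
\item The prefactor of the square root in $t^*$ matches $\sigma$ up to a negligible factor. By \eqref{eqMuSigDef},
\[
\sigma^2=\mu(1-p)\,\tfrac{N}{N-1}\,\tfrac{N-n+1}{N-1}=\tfrac{2m}{n}\bigl(1-\tfrac mN\bigr)\bigl(1+O(1/n)\bigr).
\]
Hence $\sqrt{\tfrac{2m}{n}(1-\tfrac mN)}=\sigma(1+O(1/n))$. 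Multiplying by $\sqrt{2\log n}$ changes things by only $O(\sigma\sqrt{\log n}/n)=o(\sigma)$.
\end{itemize}

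It therefore suffices to show
\[
\sqrt{2\log n-\tfrac1\alpha\log\log n}\;=\;\frac{k}{\sigma}+\frac{\gamma}{\sqrt2}+o(1).
\]

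\textbf{Expanding the left side.} Write $\sqrt{2\log n-\tfrac1\alpha\log\log n}=\sqrt{2\log n}\,(1-O(\log\log n/\log n))^{1/2}$. This equals $\sqrt{2\log n}+O(\log\log n/\sqrt{\log n})=\sqrt{2\log n}+o(1)$. So the $\alpha$-correction is invisible at this scale.

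\textbf{Expanding $k/\sigma$.} Substituting \eqref{eqScalingWindowM} gives
\[
\sigma^2=\frac{k^2}{2\log n}\Bigl(1+\frac{\gamma}{\sqrt{\log n}}\Bigr)(1-p)\bigl(1+O(1/n)\bigr).
\]
Note that $p=m/N\sim k^2/(2n\log n)$, and $k\ll\sqrt n$ gives $p=o(1/\log n)$. Taking square roots and inverting,
\[
\frac{k}{\sigma}=\sqrt{2\log n}\Bigl(1-\frac{\gamma}{2\sqrt{\log n}}+O\bigl(\tfrac1{\log n}\bigr)+O(p)\Bigr)=\sqrt{2\log n}-\frac{\gamma}{\sqrt2}+o(1).
\]
Here $p\sqrt{\log n}=o(1)$ is used. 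Subtracting the two expansions gives the required identity, and multiplying through by $\sigma$ yields $t^*=\mu+k+\tfrac{\gamma}{\sqrt2}\sigma+o(\sigma)=\E_1[d_{\PSCenterVertex}]+\tfrac{\gamma}{\sqrt2}\sigma+o(\sigma)$.

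\textbf{Main obstacle.} The only delicate point is bookkeeping. Each multiplicative error in $\sigma$ (the $1-p$ factor, the finite-population factors, the $o(1)$ in $\mu+k$) gets multiplied by $\sqrt{2\log n}$, so each must be shown to be $o(1/\sqrt{\log n})$ relatively. This is exactly where the hypotheses $k\ll\sqrt n$ (so $p=o(1/\log n)$) and $\sigma\to\infty$ enter.
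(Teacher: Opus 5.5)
Your proposal is correct and follows essentially the same route as the paper: a direct expansion of $\sigma\sqrt{2\log n}$ against $k$ (you phrase it as $k/\sigma=\sqrt{2\log n}-\gamma/\sqrt{2}+o(1)$), with the $\alpha$-correction shown to be $o(1)$ at the standardized scale. Your explicit check that the $(1-p)$ and finite-population factors have relative size $o(1/\sqrt{\log n})$ is more careful than the paper's use of $\sigma\sim\sqrt{\mu}$. One small caveat: the error $\E_1[d_{\PSCenterVertex}]-\mu-k\approx -kp\approx -k^3/(2n\log n)$ is not $o(1)$ once $k\gg (n\log n)^{1/3}$, so the $o(1)$ in Lemma~\ref{lemma:maxDegreeStatistics} is overstated; it is still $o(\sigma)$ because $k^2\ll n$, and $o(\sigma)$ is all your argument uses.
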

\begin{proof}
Specializing to the critical window of $m$ in \eqref{eqScalingWindowM}, we have from \eqref{eqMuSigDef} that
\[
\mu = \frac{2m}{n}= \frac{k^2}{2\log n}\Big(1+\frac{\gamma}{\sqrt{\log n}}\Big),
\qquad
\sigma \sim \sqrt{\mu} = \frac{k}{\sqrt{2\log n}}
\Big(1+O\Big(\frac{1}{\sqrt{\log n}}\Big)\Big).
\]
We have the approximation
\begin{align}
\sigma\sqrt{2\log n}
&= \sqrt{2\mu\log n}
= k\sqrt{1+\frac{\gamma}{\sqrt{\log n}}}
= k + \frac{\gamma}{2}\frac{k}{\sqrt{\log n}} + o\Big(\frac{k}{\sqrt{\log n}}\Big).
\label{eq:max-expansion}
\end{align}
Thus, by a Taylor expansion,
\begin{align*}
t^* &\sim \frac{2m}{n} +  \sqrt{\frac{2m}{n}} \sqrt{2 \log n - \frac{\log \log n}{\alpha}} \sim \mu + \sigma \sqrt{2 \log n} \sqrt{1 - \frac{\log \log n}{2\alpha \log n}} \\
&= \mu + \sigma\sqrt{2 \log n} + \sigma\cdot  O\!\inparen{ \frac{\log \log n}{\log n}} \\
&= \mu + k + \frac{\gamma}{2} \frac{k}{\log n} + o(\sigma),
\end{align*}
and the result follows since $\sigma \sim k/\sqrt{2 \log n}$.
\end{proof}
\begin{proof}[Proof of Proposition \ref{propTV_lowerbound}]
By the asymptotic normality of (standardized) Hypergeometric r.v.'s~in the moderate regime Lemma \ref{lemHypTailToGaussianCDF}, and Fact \ref{factTStar_and_PSCenter}, we have
\[
\PP_1[\Delta \geq t^*] = \PP_1[d_{v_*} \geq t^*] = \PP_1\insquare{ \frac{d_{v_*} - \mu - k}{\sigma} \geq \frac{t^* - \mu - k}{\sigma}  } = 1-\Phi\!\Big(\frac{\gamma}{\sqrt{2}}\Big)+o(1),
\]
Furthermore, $\PP_0[\Delta \geq t^*] = o(1)$ by Lemma \ref{lem:maxdeg_short}. The result follows from $\TV(\PP_0, \PP_1) \geq \PP_1[\phi_n(G) = 1] + \PP_0[\phi_n(G) = 1]$.
\end{proof}

\subsection{An upper bound}

The main result of this section is the following.
\begin{lemma}\label{lemUpperBound}
    The total variation distance between $\PP_0$ and $\PP_1$ satisfies
    \begin{align*}
        \TV(\PP_0, \PP_1) \leq 1 - \Phi\bigg( \frac{\gamma}{\sqrt{2}} \bigg) + o(1).
    \end{align*}
\end{lemma}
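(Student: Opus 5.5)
We bound the total variation through the likelihood ratio test, using the characterization \eqref{eqTVChar}: $\TV(\PP_0,\PP_1)=\PP_1(\Lambda\ge 1)-\PP_0(\Lambda\ge1)\le \PP_1(\Lambda\ge 1)$. It therefore suffices to show that
\[
\PP_1(\Lambda\ge1)\le 1-\Phi\big(\gamma/\sqrt2\big)+o(1).
\]
The plan is to compare the event $\{\Lambda\ge1\}$ under $\PP_1$ with the event $\{d_{\PSCenterVertex}\ge t^*\}$. The latter has probability $1-\Phi(\gamma/\sqrt2)+o(1)$ by Lemma~\ref{lemma:maxDegreeStatistics}, Fact~\ref{factTStar_and_PSCenter} and Lemma~\ref{lemHypTailToGaussianCDF}, exactly as in the proof of Proposition~\ref{propTV_lowerbound}.

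\textbf{Step 1: decompose $\Lambda$ under $\PP_1$.} By Lemma~\ref{lemLR_expansion}, whp under $\PP_1$,
\[
\Lambda=(1+o(1))\Big(e^{-a^2/2}\tfrac1n e^{aY_{\PSCenterVertex}}+e^{-a^2/2}\tfrac1n\sum_{i\ne \PSCenterVertex}e^{aY_i}\Big).
\]
Under $\PP_1$, the graph on the remaining edges is essentially a $G(n,m-k)$ with $k$ extra edges attached to $\PSCenterVertex$. The non-hub degrees $d_i$, $i\neq \PSCenterVertex$, are therefore perturbed by at most one each, and only $k$ of them are perturbed at all. Since $a/\sigma=k/\mu\to0$, each perturbation changes $e^{aY_i}$ by a factor $1+o(1)$. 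Moreover the shift $m\to m-k$ changes $\mu$ by $O(k/n)$, which is negligible. Hence the second sum behaves like the null sum over $n-1$ vertices. Since $c=\tfrac14$, Proposition~\ref{propZphasetransition}(ii) gives that this sum is $\tfrac12+o(1)$ whp. To transfer it I would either couple $\PP_1$ to $\PP_0$ by deleting the $k$ planted edges, or rerun the truncated second moment argument, which only uses degree MGF stability (Lemmas~\ref{lem:mgf-stab-d2}, \ref{lem:mgf-stab-d3}).

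\textbf{Step 2: the hub term.} Write $Y_{\PSCenterVertex}=k/\sigma+W$, where $W$ is asymptotically standard Gaussian. Since $a=k\sigma/\mu$ and $\sigma^2\sim\mu$, we get $ak/\sigma\sim a^2$. The hub term is then $\exp\big(aW+a^2/2+o(1)\cdot(\ldots)-\log n\big)$. With $a^2\sim 2\log n\,(1-\gamma/\sqrt{\log n})$, the exponent is $a\big(W-\gamma/\sqrt2\big)+O(1)$, up to lower-order corrections that must be tracked carefully. Consequently the hub term tends to $0$ or $\infty$ according as $W<\gamma/\sqrt2-\eps$ or $W>\gamma/\sqrt2+\eps$. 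Combined with Step 1, $\Lambda\ge 1$ forces the hub term to be at least $\tfrac12-o(1)$, which forces $W\ge\gamma/\sqrt2-\eps$ whp. The same threshold algebra appears in Fact~\ref{factTStar_and_PSCenter}, where $t^*$ corresponds to $W=\gamma/\sqrt2+o(1)$. We conclude
\[
\PP_1(\Lambda\ge1)\le \PP(W\ge\gamma/\sqrt2-\eps)+o(1)=1-\Phi(\gamma/\sqrt2-\eps)+o(1),
\]
and letting $\eps\to0$ finishes the proof.

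\textbf{Main obstacle.} The main difficulty is Step 1: showing that the non-hub partition sum under $\PP_1$ stays concentrated at $\tfrac12$, or at least is at most $\tfrac12+o(1)$ whp. Only an upper bound is needed, so I would first try the cheap route, a first moment on $B^c$ plus Markov. The argument would show that, under $\PP_1$ and on the event $\max_{i\ne\PSCenterVertex}Y_i<t_n$, the conditional mean of the non-hub sum is $(\tfrac12+o(1))\E_0 Z/n$, up to normalization. This reproduces the $S_{\text{diag}}$ computation with the planted perturbation absorbed into the $(1+o(1))$ factors. A secondary care point is controlling the $o(1)$ corrections in the hub exponent at precision $O(1)$ relative to $a\sim\sqrt{\log n}$. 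Here I would use exact expansions of $a$, $k/\sigma$ and $\mu$ in $\gamma$, as in \eqref{eq:max-expansion}.
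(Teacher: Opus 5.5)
Your argument works once Step 1 is done by a coupling, and it is organized differently from the paper's.

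\textbf{Comparison of routes.} The paper first proves Lemma \ref{lemLR_upperBound}: on $\{\Delta\le t^*\}$, $\Lambda\le\frac12+\varepsilon$ whp under $\PP_1$. It then splits $\sum_{\mu_1>\mu_0}(\mu_1-\mu_0)$ over $\{\Delta\le t^*\}$, $\{t^*\le\Delta\le u_\varepsilon\}$ and $\{\Delta\ge u_\varepsilon\}$, using Lemma \ref{lem:maxdeg_short} to identify the maximum degree with the hub degree.

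You instead use $\TV(\PP_0,\PP_1)\le\PP_1(\Lambda\ge1)$, and cut directly on $W=(d_{\PSCenterVertex}-\mu-k)/\sigma$ with an $\varepsilon$-margin. Your sign, $\PP_1(\Lambda\ge1)-\PP_0(\Lambda\ge1)$, is the correct one; the display \eqref{eqTVChar} has it reversed. Your exponent $a(W-\gamma/\sqrt2)+O(1)$ is right, and the constant works out to $\gamma^2/2+o(1)$.

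The $\varepsilon$-margin makes several paper ingredients unnecessary: the $\log\log n$ correction in $t^*$, the band $[t^*,u_\varepsilon]$, and max-degree localization under $\PP_1$. What the paper's route buys is a statement about the observable $\Delta$, which it reuses for Theorem \ref{thmOptimalTest}. For the TV bound alone, your route is shorter. The core is shared: the hub/non-hub split from Lemma \ref{lemLR_expansion}, plus transferring the null phase transition to $\PP_1$.

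\textbf{The step that would fail.} The ``cheap route'' you say you would try first does not work. The truncated first moment only shows that the non-hub sum has conditional mean about $\frac12$ on $B^c$. Markov then gives $\PP_1(\text{non-hub sum}\ge 1-\delta)$ at most about $\frac{1/2}{1-\delta}$, which does not vanish. The level $1$ is only a constant factor above the mean, so you need concentration, i.e.\ the conditional second moment. At $c=\frac14$ this is the second-order Bonferroni computation in Proposition \ref{propZphasetransition}.

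So you must use the coupling (or rerun the full second-moment argument), and the coupling has to be a containment coupling. Simply deleting the planted edges leaves $H$, a uniform $(m-k)$-subset of $E\setminus S$, which is not a $G(n,m-k)$. Instead, order all edges by a uniform permutation $\pi$ independent of $S$. Let $G^{(0)}$ be the first $m$ edges and $H$ the first $m-k$ edges of $E\setminus S$. At most $k$ of the first $m$ edges lie in $S$, so $H\subseteq G^{(0)}$. Hence $d_i^{(1)}\le d_i^{(0)}+1$ for all $i\neq\PSCenterVertex$, and keeping the null centering $\mu,\sigma$,
\[
\frac{e^{-a^2/2}}{n}\sum_{i\neq\PSCenterVertex}e^{aY_i^{(1)}}\;\le\;e^{a/\sigma}\,\frac{e^{-a^2/2}}{n}\sum_{i=1}^n e^{aY_i^{(0)}},\qquad e^{a/\sigma}=e^{k/\mu}=1+o(1).
\]
The right side is then controlled by Proposition \ref{propZphasetransition}(ii) applied to $G^{(0)}\sim\PP_0$. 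This is the paper's argument, slightly streamlined: no separate treatment of leaves and no recentering for $m\to m-k$.

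\textbf{A remark on Step 1.} State Step 1 as ``the non-hub sum is at most $1-\delta$ whp'', since that is all you use. In the window \eqref{eqScalingWindowM}, $a_n=\sqrt{2\log n}-\gamma/\sqrt2+o(1)$, while the truncation level is forced to be $t_n=\sqrt{2\log n}+o(1)$. Hence $t_n-a_n\to\gamma/\sqrt2$, and for $\gamma\neq0$ the truncated computation gives the null limit $\Phi(\gamma/\sqrt2)$ rather than $\frac12$. This is still $<1$, so your conclusion is unaffected.
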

The upper bound relies on the following lemma. 
\begin{lemma}\label{lemLR_upperBound}
Let $t^*$ be defined in \eqref{eqThresholdDefn} for any fixed $\alpha > 1$. Suppose that the event $\inbraces{\Delta \leq t^*}$ holds, and suppose that we are within the critical window \eqref{eqCriticalWindow_m_in_c} for $c = \tfrac14$. Then for any fixed $\varepsilon > 0$,
\[
\Lambda(G) \leq \frac{1}{2} + \epsilon, \qquad \text{whp under both $\PP_0$ and $\PP_1$}.
\]
\end{lemma}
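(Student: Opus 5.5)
We need to show: on the event $\{\Delta\le t^*\}$, $\Lambda(G)\le \tfrac12+\varepsilon$ whp under both $\PP_0$ and $\PP_1$. Since $c=\tfrac14$, Lemma~\ref{lemLR_expansion} gives, whp under both measures,
\[
\Lambda(G)=(1+o(1))\,e^{-a^2/2}\,\frac1n\sum_i e^{aY_i},\qquad a\sim\sqrt{2\log n}.
\]
On $\{\Delta\le t^*\}$ every $Y_i\le s_n:=(t^*-\mu)/\sigma=\sqrt{2\log n-\alpha^{-1}\log\log n}\,(1+o(1/\log n))$. So it suffices to bound the truncated sum
\[
\widetilde Z:=\sum_i e^{aY_i}\1\{Y_i\le s_n\},
\]
which dominates $Z$ on $\{\Delta\le t^*\}$. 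The plan is to show $\widetilde Z\le(\tfrac12+\varepsilon)nM_1$ whp, first under $\PP_0$ and then under $\PP_1$. Recall $nM_1 e^{-a^2/2}/n=1+o(1)$ by \eqref{eq:M1M2}.

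\textbf{Under $\PP_0$.} We use Markov's inequality on $\widetilde Z$. By exponential tilting and Lemma~\ref{lemHypTailToGaussianCDF},
\[
\E_0\widetilde Z=nM_1\,\QQ^{(a)}(Y_1\le s_n)=(1+o(1))\,nM_1\,\Phi(s_n-a).
\]
Markov alone only yields a constant, not concentration. However, the lemma asks only for an upper bound $\tfrac12+\varepsilon$, and $\Phi(s_n-a)$ is what matters. Here $s_n-a$ must converge to a limit of at most $0$. Using the precise value $a=k\sigma/\mu$ together with the window \eqref{eqScalingWindowM} (cf.\ Fact~\ref{factTStar_and_PSCenter}), we get $\sigma s_n=t^*-\mu=k+\tfrac{\gamma}{\sqrt2}\sigma+o(\sigma)$, hence
\[
s_n-a=\frac{t^*-\mu-k}{\sigma}\to\frac{\gamma}{\sqrt2}.
\]
This is not $\le 0$ for $\gamma>0$, so plain Markov fails and a second moment is needed. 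We therefore reuse the $c=\tfrac14$ computation of Proposition~\ref{propZphasetransition}, with the truncation $B$ replaced by the event $\{\Delta\le t^*\}$. This event is itself whp under $\PP_0$ by Lemma~\ref{lem:maxdeg_short}, and $t_n$ there is any $\sqrt{2\log n}(1+o(1))$, so $s_n$ is a legitimate choice. The conclusion is $Z/(nM_1)\to$ a constant whp. The only concern is whether the constant $\tfrac12$ shifts to $\Phi(\gamma/\sqrt2)$. Note that Proposition~\ref{propZphasetransition}(ii) asserts $\tfrac12$ for the whole window, because $a$ and $s_n$ both scale with $\sqrt{\log n}$ and the $o(1)$ differences are absorbed there. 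I would track this carefully, and if needed exploit $e^{-a^2/2}M_1$ corrections, since $a$ depends on $\gamma$ too. The result then follows from Proposition~\ref{propZphasetransition}(ii) directly under $\PP_0$.

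\textbf{Under $\PP_1$.} Split $\Lambda$ into the hub term and the rest. The hub contributes
\[
\frac1n e^{aY_{v_*}-a^2/2}\le \frac1n e^{as_n-a^2/2}=n^{-1+1+o(1)-1}=o(1)
\]
since $a,s_n\sim\sqrt{2\log n}$. More carefully, $as_n-a^2/2\approx\log n-O(\log\log n)$, which is still $o(n)$, so the hub term is $O((\log n)^{-1/(2\alpha)})=o(1)$. This is exactly where $\alpha>1$ and the $\log\log n$ correction enter. The remaining degrees $d_i$, $i\ne v_*$, are those of a graph that, conditionally on the hub and its star edges, is $G(n,m)$-like with $m-k$ edges and a perturbation of at most one per vertex. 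By the local insensitivity lemmas (Lemmas~\ref{lem:mgf-stab-d2}, \ref{lem:mgf-stab-d3}), their truncated first and second moments match the null ones up to $1+o(1)$. Hence the $\PP_0$ truncated second moment argument transfers, giving $\tfrac12+o(1)$ for the non-hub sum.

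The main obstacle is the $\PP_1$ transfer: justifying that the conditional law of the non-hub degrees has the same tilted truncated moments. I would handle this via the representation of $\PP_1$ as $\PP_0$ reweighted by $\Lambda$, or by directly repeating the Hypergeometric computations with $N-k$ and $m-k$.
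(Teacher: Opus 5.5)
The main gap is the non-hub part of $\Lambda$ under $\PP_1$. Lemmas~\ref{lem:mgf-stab-d2} and~\ref{lem:mgf-stab-d3} are about $G(n,m)$ conditioned on one degree. They say nothing about the non-hub degrees under $\PP_1$, and you leave this transfer open as ``the main obstacle''. Doing it your way means re-proving the tail, MGF and insensitivity estimates for a uniform $(m-k)$-subset of the $N-k$ non-star edges, and then redoing the whole truncated second moment.

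The missing idea is that only an \emph{upper} bound is needed, so a monotone coupling suffices. The paper orders all edges by one uniform permutation $\pi$. It takes $G^{(0)}$ to be the first $m$ edges, and the non-star part $H$ of $G^{(1)}$ to be the first $m-k$ edges outside the star $S$. At most $k$ of the first $m$ edges lie in $S$, so $H\subseteq G^{(0)}$. Hence $d_i^{(1)}\le d_i^{(0)}+\1\{i\text{ is a leaf}\}$ for every $i\neq\PSCenterVertex$. Since $y\mapsto e^{ay}$ is increasing and $e^{a/\sigma}=1+o(1)$, the non-hub part of $\Lambda(G^{(1)})$ is at most $(1+o(1))\Lambda(G^{(0)})$ whp. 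The $\PP_0$ statement therefore transfers with no new moment computations; the paper additionally disposes of the $k$ leaves by Markov, using that $S$ is independent of $G^{(0)}$.

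Your reweighting idea can also be completed:
$\PP_1(\Delta\le t^*,\Lambda>x)=\E_0[\Lambda\1\{\Delta\le t^*,\Lambda>x\}]\le \E_0[\Lambda^2\1\{\Delta\le t^*\}]^{1/2}\,\PP_0(\Lambda>x)^{1/2}$.
The truncated second moment here is $O(1)$, after bounding $\Lambda$ deterministically by its exponential form via $1+u\le e^u$. As written, though, neither route is carried out.

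Your hub bound matches the paper's, but only your ``more careful'' version is valid. The asymptotics $a,s_n\sim\sqrt{2\log n}$ cannot see the cancellation of the $\gamma\sqrt{\log n}$ terms in $as_n-a^2/2$. Also, any $\alpha>0$ suffices for the hub; $\alpha>1$ is needed only for $\PP_0(\Delta\ge t^*)=o(1)$.

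Separately, under $\PP_0$ you end up doing exactly what the paper does: invoke Proposition~\ref{propZphasetransition}(ii) together with $\PP_0(\Delta\ge t^*)=o(1)$. But the discrepancy you noticed is real, and your dismissal of it is wrong. You correctly found $s_n-a\to\gamma/\sqrt2$, which is a $\Theta(1)$ gap, not an absorbed $o(1)$. Also $e^{-a^2/2}M_1=1+o(1)$ by Lemma~\ref{lemHyp_Cum_CGF_MGF_Bound}, so there is no correction to exploit. The $\tfrac12$ in Proposition~\ref{propZphasetransition}(ii) comes from $t_n-a_n\to0$, which fails for $\gamma\neq0$ because $a_n=\sqrt{2\log n}-\gamma/\sqrt2+o(1)$.

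For $\gamma>0$ the bound with $\tfrac12$ is in fact false. Take $A=\{\Delta\le t^*,\ \Lambda\le\tfrac12+\varepsilon\}$. Then $\PP_1(A)=\E_0[\Lambda\1_A]\le\tfrac12+\varepsilon$. The $\PP_1$ half of the claim would instead force $\PP_1(A)=\PP_1(\Delta\le t^*)-o(1)\to\Phi(\gamma/\sqrt2)>\tfrac12$, by Proposition~\ref{propTV_lowerbound}. Via the coupling above, the $\PP_0$ half fails as well.

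What is true is $\Lambda\le\Phi(\gamma/\sqrt2)+\varepsilon<1$, and that is all Lemma~\ref{lemUpperBound} and Theorem~\ref{thmOptimalTest} actually use. Your truncated first moment $\Phi(s_n-a)$ already identifies this constant. The paper's proof rests on the same proposition, so this is not a departure from it. Still, you should carry $\Phi(\gamma/\sqrt2)$ through rather than assert $\tfrac12$.
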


\begin{proof}
    The assertion for the null distribution $\PP_0$ is immediate by Proposition \ref{propZphasetransition} and Lemma \ref{lem:maxdeg_short} which shows that the hypothesis event holds with high probability.

    It remains to prove the assertion for the planted distribution $\PP_1$. Recall that $\PSCenterVertex$ denotes the planted hub vertex. By Lemma \ref{lemLR_expansion} we may write, whp $\PP_1$,
    \begin{align}\label{eqUpperBound_LR_Decomp}
        \Lambda(G) = (1+o(1)) \frac{e^{-a_n^2/2}}{n}\inbraces{ e^{a_n Y_{\PSCenterVertex}}  + \sum_{i \in L} e^{a_n Y_i} + \sum_{i \in R} e^{a_n Y_i}  },
    \end{align}
    where we denote by $L$ and $R$ the set of star leaf vertices and the set of vertices not involved with the planted star respectively. We first show that the contribution from the planted center $\PSCenterVertex$ is small under the event $\inbraces{\Delta \leq t^*}$. Note that within the scaling window \eqref{eqScalingWindowM},
    \begin{align*}
        a_n = \frac{k\sigma}{\mu} = \sqrt{2 \log n} - \frac{\gamma}{\sqrt{2}} + o(1).
    \end{align*}
    It follows that
    \begin{align}\label{eqUpperBound_PSCenter_Bound}
        \frac{e^{-a_n^2/2}}{n} e^{a_n Y_{\PSCenterVertex}} &\leq \frac{e^{-a_n^2/2}}{n} \exp\inbraces{a_n \inparen{ \frac{t^*-\mu}{\sigma}  }}  \nonumber \\ 
        &= \exp\inbraces{  - \frac{a_n^2}{2} - \log n + \inparen{1 + O\!\inparen{\frac{1}{n}}} a_n \sqrt{2 \log n - \frac{\log \log n}{\alpha}}  }  \nonumber \\
        &= \exp \inbraces{ \frac{\gamma^2}{2} - \frac{\log \log n}{2\alpha} + o(1)   } = (1 + o(1)) e^{\frac{\gamma^2}{2}} (\log n)^{-2\alpha} = o(1).
    \end{align}
        For the non-center vertices, consider the following coupling between $\PP_1$ and a $\PP_0$. Let $\pi$ be a uniform random permutation over the set of all edges $E = \binom{[n]}{2}$. To generate the planted distribution, first pick a set of planted star edges $S$ out of $E$. Then choose the set of unplanted edges $H$ to be the first $m-k$ edges in $E \setminus S$ under this permutation $\pi$. The graph $G^{(1)} = S \cup H$ is a sample from $\PP_1$. A sample $G^{(0)}$ from $\PP_0 = G(n,m)$ is generated by choosing the first $m$ edges of $E$ under $\pi$.

        Observe that the first $m-k$ edges from $E \setminus S$ must occur within the first $m$ positions. Thus $H \subseteq G^{(0)}$ almost surely under this coupling. It follows that the degrees $\{d_i^{(1)}\}$ of the vertices $i$ involved in the edge set $H$ are related to the corresponding degrees $\{ d_i^{(0)} \}$ in $G^{(0)}$ by
        \[
        d_i^{(1)} \leq d_i^{(0)} + \1\!\inbraces{i \text{ is a star leaf in $G^{(1)}$}} \quad \text{a.s.}
        \]
        We may identify $L = V(S) \setminus \inbraces{v_*}$ and $R = [n]\setminus (L \cup \inbraces{v_*})$. Let $Y_i^{(b)} = (d_i^{(b)} - \mu)/\sigma$ for $b = 1,2$. By Proposition \ref{propZphasetransition} again, since $c = \tfrac{1}{4}$
        \begin{align}\label{eqUpperBound_nonPlant_Bound}
            \frac{e^{-a_n^2/2}}{n} \sum_{i \in R}   e^{a_n Y_i^{(1)}} \leq  \frac{e^{-a_n^2/2}}{n} \sum_{i \in R}   e^{a_n Y_i^{(0)}} \leq \frac{1}{2} + \varepsilon \quad \text{whp.}
        \end{align}
        On the other hand, we claim that
        \begin{align}\label{eqUpperBound_starLeaves_Bound}
            \frac{e^{-a_n^2/2}}{n} \sum_{i \in L}   e^{a_n Y_i^{(1)}} \leq e^{\frac{a_n}{\sigma}} \underbrace{\frac{e^{-a_n^2/2}}{n} \sum_{i \in L}   e^{a_n Y_i^{(0)}}}_{=: T^{(0)}} = o(1) \quad \text{whp.}
        \end{align}
        To see this, apply Markov's inequality and the Hypergeometric MGF bound Lemma \ref{lemHyp_Cum_CGF_MGF_Bound} so that for any $\delta > 0$, since $k \ll \sqrt{n}$,
        \begin{align*}
            \PP_0(T^{(0)} >\delta) \leq \frac{1+o(1)}{\delta}\frac{k}{n} \to 0 \quad \text{as $n \to \infty$}.
        \end{align*}
        It follows that $T^{(0)} = o(1)$ whp. We also have $\frac{a_n}{\sigma} \sim \frac{\log n}{k} = o(1)$ since $k \gg (\log n)^2$. 
        
        Plugging \eqref{eqUpperBound_PSCenter_Bound}, \eqref{eqUpperBound_nonPlant_Bound}, and \eqref{eqUpperBound_starLeaves_Bound} into \eqref{eqUpperBound_LR_Decomp} yields the result for $\PP_1$.
\end{proof}

\begin{proof}[Proof of Lemma \ref{lemUpperBound}]
Recall the threshold $t^*$ defined in \eqref{eqThresholdDefn}. For some $\epsilon > 0$, define the threshold $u_{\varepsilon} > t^*$ by 
\begin{align*}
u_{\varepsilon} = \frac{2m}{n}+\sqrt{\frac{2m}{n}\Big(1-\frac{m}{N}\Big)}\,
\sqrt{2\log n \inparen{ 1 + \frac{\varepsilon}{\sqrt{\log n}}  }-\frac{1}{\alpha}\log\log n}\, .
\end{align*}
This point $u_\epsilon$ is chosen such that $u_{\epsilon} = \mu + k + \sigma \frac{\gamma + \varepsilon}{\sqrt{2}} + o(\sigma)$, as can be verified by similar arguments as in Section \ref{secLowerBound}. Write $\mu_{b}$ for the probability mass function of $\PP_{b}$, $b = 1,2$. We have
\begin{align*}
\TV(\PP_0,\PP_1)
&=\sum_{\mu_1(G)>\mu_0(G)} \big(\mu_1(G)-\mu_0(G)\big)\\
&\le
\sum_{\substack{\mu_1>\mu_0,\\ \Delta\le t^*}} \big(\mu_1(G)-\mu_0(G)\big)
+
\sum_{\substack{\mu_1>\mu_0,\\ \Delta\ge u_{\varepsilon}}} \big(\mu_1(G)-\mu_0(G)\big)
\;+\;
\sum_{t^*\le \Delta\le u_{\varepsilon}} \mu_1(G).
\end{align*}
The third sum is $\le K\varepsilon$ for some constant $K > 0$ by the asymptotic normality of the standardized planted center degree $d_{v_*}$ and the fact that whp none of the other vertices will have degree exceeding $t^*$. The first sum is $\le \varepsilon$
since if $\Delta\le t^*$, we have $\mu_1(G)\le (\tfrac{1}{2}+\varepsilon)\mu_0(G)$ by Lemma \ref{lemLR_upperBound}. For the second sum,
\begin{align*}
\sum_{\substack{\mu_1>\mu_0,\\ \Delta\ge u_{\varepsilon}}} \big(\mu_1(G)-\mu_0(G)\big)
&= \sum_{\Delta\ge u_{\varepsilon}} \big(\mu_1(G)-\mu_0(G)\big)
-\sum_{\substack{\Delta\ge u_{\varepsilon},\\ \mu_0>\mu_1}} \big(\mu_1(G)-\mu_0(G)\big) \\
&\leq \sum_{\Delta\ge u_{\varepsilon}} \mu_1(G)
+\sum_{\Delta\ge u_{\varepsilon}} \mu_0(G).
\end{align*}
In the above display, the first sum is upper bounded by $1-\Phi\!\left(\frac{c}{\sqrt2}\right)+o(1)$ by the same conditioning argument as in the proof of Lemma \ref{lem:maxdeg_short} showing that the max degree in the planted model is attained by the planted center whp, and Lemma \ref{lemHypTailToGaussianCDF} giving the asymptotic Gaussian tails. The second sum is $o(1)$ by Theorem \ref{thm:max_deg_gnm}. The result follows since $\varepsilon$ is arbitrary.
\end{proof}
\begin{proof}[Proof of Theorem \ref{thmOptimalTest}]
Denote events $A= \{\Lambda(G) \ge1 \}$ and $B = \{ \Delta(G) \ge t^\ast \}$. For the null model, the probability of the symmetric difference $A \ominus B$ is
\[
    \PP_0[A \ominus B] \leq \PP_0[A] + \PP_0[B] = o(1)
\]
by Proposition \ref{propZphasetransition} and Lemma \ref{lem:maxdeg_short}. For the planted model, write
\[
\PP_1[A \ominus B] = \PP_1[A \mid B^c]\cdot \PP_1[B^c] + \PP_1[A^c \cap B]
\]
We have $\PP_1[A \mid B^c] = o(1)$ by Lemma \ref{lemLR_upperBound}. For the second term, for some $\epsilon > 0$
\begin{align*}
    &\PP_1[A^c \cap B] = \PP_1[\Lambda \leq 1 \mid \Delta \geq t^*] \\
    &= \PP_1[\Lambda \leq 1 \mid t^* \leq \Delta \leq t^* + \epsilon \sigma] \cdot \PP_1[ \Delta \leq t^*+ \epsilon \sigma \mid \Delta \geq t^* ] \\
    &\qquad + \PP_1[\Lambda \leq 1 \mid \Delta \geq t^* + \epsilon \sigma] \cdot \PP_1[ \Delta \geq t^* + \epsilon \sigma \mid \Delta \geq t^*].
\end{align*}
For the first term above, we have 
\[
\PP_1[ \Delta \leq t^*+ \epsilon \sigma \mid \Delta \geq t^* ]  = \frac{\PP_1[ t^* \leq \Delta \leq t^* + \epsilon \sigma  ] }{\PP_1[\Delta \geq t^*]} = O(\epsilon)
\]
which follows from Lemma \ref{lem:maxdeg_short} and Lemma \ref{lemHypTailToGaussianCDF}. For the second term above, we note that under the event $\inbraces{\Delta \geq t^* + \epsilon \sigma}$, with $a_n = (1+o(1))\sqrt{2 \log n}$ when $c = \tfrac{1}{4}$,
\begin{align*}
\Lambda(G) &= (1+o(1)) \frac{e^{-a_n^2/2}}{n}\inbraces{  \sum_{i =1}^{n} e^{a_n Y_i}  } \geq (1+o(1)) \frac{e^{-a_n^2/2}}{n} e^{a_n Y_{d_{\PSCenterVertex}}} \\
&\geq (1+o(1))\exp\inbraces{ - 2\log n + \sqrt{2 \log n} \inparen{ \sqrt{ 2 \log n - \frac{1}{\alpha}\log \log n  }  + \epsilon } } \\
&= (1+o(1))\exp\inbraces{ - 2\log n + \sqrt{2 \log n} \inparen{ \sqrt{ 2 \log n \inparen{1  - \frac{\log \log n}{2 \alpha \log n} } }  + \epsilon } } \\
&= (1+o(1)) \exp \inbraces{ \sqrt{2 \log n} \epsilon + O\!\inparen{\frac{\log \log n}{\log n}}  }\,, 
\end{align*}
and so $\PP_1[\Lambda \leq 1 \mid \Delta \geq t^* + \epsilon \sigma] = 0$ if $\eps = \omega( (\log n)^{-1/2})$.  Taking $\epsilon \to 0$ sufficiently slowly finishes the proof.
\end{proof}

\section{Maximum degrees in the null and planted models}\label{secMaxDegreesInNullAndPlanted}

In this section we prove Lemma \ref{lem:maxdeg_short}. The following implies the assertion for the null model.

\begin{lemma}\label{thm:max_deg_gnm}
Let $G\sim \PP_0 = G(n,m)$ and suppose $n (\log n)^3 \ll m \ll n^2$. For fixed $\alpha>0$ let $t^*$ be defined as in \eqref{eqThresholdDefn}.
Then
\[
\PP\big(\Delta \ge t^*\big)=o(1)\quad\text{if }\alpha>1,
\quad\text{and}\quad
\PP\big(\Delta \ge t^*\big)=1-o(1)\quad\text{if }\, 0<\alpha<1.
\]
\end{lemma}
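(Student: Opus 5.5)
The plan is a first moment bound for $\alpha>1$ and a second moment argument on the count of high-degree vertices for $\alpha<1$. Set $s := (t^*-\mu)/\sigma$. Since $\mu=(n-1)p$, $\sigma^2\sim \frac{2m}{n}(1-\frac mN)$ and $\mu - 2m/n = -p = o(\sigma)$, we have
\[
s=(1+o(1))\sqrt{2\log n-\alpha^{-1}\log\log n},
\]
with the error small enough that $s^2 = 2\log n-\alpha^{-1}\log\log n + o(1)$. This last point needs $\sigma^2 = \frac{2m}{n}(1-\frac mN)(1+O(1/n))$, which holds by \eqref{eqMuSigDef}. By Lemma \ref{lemHypTailToGaussianCDF} (moderate deviations for the hypergeometric, valid at $s\asymp\sqrt{\log n}$ because $\mu\gg(\log n)^3$, which is exactly the hypothesis $m\gg n(\log n)^3$), we get $\PP(Y_1\ge s)=(1+o(1))\Phi^c(s)$. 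Mills' ratio then gives
\[
\E X := n\Phi^c(s)(1+o(1)) \asymp \frac{n\,e^{-s^2/2}}{s}\asymp \frac{(\log n)^{1/(2\alpha)}}{\sqrt{\log n}} = (\log n)^{\frac{1}{2\alpha}-\frac12},
\]
where $X=\sum_i \1\{d_i\ge t^*\}$. One caveat is the discreteness of $d_i$ (rounding $t^*$ shifts $s$ by $1/\sigma$). This only changes $s^2/2$ by $O(s/\sigma)=o(1)$, so it is harmless.

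For $\alpha>1$ the exponent is negative, so $\E X\to0$. The union bound $\PP(\Delta\ge t^*)\le \E X=o(1)$ finishes this case.

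For $\alpha<1$ we have $\E X\to\infty$, and I would apply Chebyshev (or Paley--Zygmund) to $X$. This requires
\[
\E[X^2]=\E X + n(n-1)\PP(d_1\ge t^*,\,d_2\ge t^*) \le \E X+(1+o(1))(\E X)^2 ,
\]
which reduces to showing $\PP(d_1\ge t^*\mid d_1=j)$-style decoupling: $\PP(d_2\ge t^*\mid d_1=j)=(1+o(1))\PP(d_2\ge t^*)$ uniformly over $j$ in the relevant range $j\ge t^*$ (and $j\le \mu+10\sigma\sqrt{\log n}$, the rest being negligible by \eqref{high_prob_event}). To prove this, condition on the edge $\{1,2\}$ and on $d_1=j$. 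Then $d_2$ is $A_{12}$ plus a hypergeometric count of edges from $2$ to $[n]\setminus\{1,2\}$, drawn from the remaining $N-(n-1)$ slots with $m-j$ edges. Its conditional mean differs from $\mu$ by $O(p + (j-\mu)n/N + 1)=O(1+\sigma\sqrt{\log n}/n)$ and its variance is $\sigma^2(1+o(1/\log n))$. Because a mean shift $\delta$ changes a Gaussian tail at level $s$ by a factor $e^{O(s\delta/\sigma)}$, and $s/\sigma=O(\sqrt{\log n/\mu})=o(1)$, Lemma \ref{lemHypTailToGaussianCDF} applied to the conditional hypergeometric gives the desired $1+o(1)$ ratio. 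This is the same local insensitivity used earlier via Lemma \ref{lem:mgf-stab-d2}, so I would cite or adapt it. Summing gives $\Var X=o((\E X)^2)$, hence $\PP(X=0)\to0$.

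The main obstacle I expect is the precision needed in the tail approximation. Both the Gaussian approximation and the conditional decoupling must hold up to $1+o(1)$ multiplicative error at depth $s\approx\sqrt{2\log n}$, and the second-order term $\alpha^{-1}\log\log n$ in $s^2$ must not be swamped. This forces careful control of the $O(s^3/\sqrt\mu)$ Cramér correction, which is $o(1)$ exactly because $\mu\gg(\log n)^3$. It also requires checking that the variance factor $(1-m/N)$ and the $N/(N-1)$ corrections enter $s^2$ only at order $o(1)$.
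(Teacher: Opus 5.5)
Your argument is correct, but it takes a different route from the paper. The paper never computes in $G(n,m)$. It notes that $\{\Delta\ge t^*\}$ and its complement are convex properties and uses criterion (ii) of Theorem 2.2 in \cite{bollobas2001random} to transfer the statement to $G(n,p)$ with $p=m/N$ (Lemma~\ref{lem:max_deg_gnp}). With this $p$, $t^*$ is exactly $(n-1)p+\sqrt{(n-1)pq}\,x_{n,\alpha}$, and the paper runs the first and second moment method there. In $G(n,p)$ the pair correlation is handled exactly: given the indicator $Z$ of the edge $\{1,2\}$, $d_1-Z$ and $d_2-Z$ are independent $\Bin(n-2,p)$. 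Hence $\E[I_1I_2]=p\,\PP(B\ge t^*-1)^2+(1-p)\,\PP(B\ge t^*)^2$, and the DeMoivre--Laplace estimate (Lemma~\ref{lem:dem_laplace}) finishes.

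You instead stay in $G(n,m)$ and decouple by conditioning on $d_1=j$. Then $d_2$ is a $\Bern(j/(n-1))$ plus an independent $\Hyp(N-n+1,n-2,m-j)$. Your perturbation bounds are right, and can be made exact: $\E[d_2\mid d_1=j]=\frac{2m-j}{n-1}=\mu-\frac{j-\mu}{n-1}$, and the conditional variance is $\sigma^2(1+O(1/n+j/m))$. Since $j\le n-1\ll m$, this holds uniformly over all $j\ge t^*$, with mean shift in $[-1,0]$. So your truncation at $\mu+10\sigma\sqrt{\log n}$ is unnecessary. If you keep it, justify discarding the upper range by the tail bound $n^2\,\PP(d_1>\mu+10\sigma\sqrt{\log n})=o(1)$. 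The whp statement \eqref{high_prob_event} by itself does not control the $n^2$ prefactor.

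The trade-off is this. The paper gets exact independence and classical binomial asymptotics at the price of an external transfer theorem. Your proof is self-contained within the paper's hypergeometric toolkit, but it needs Lemma~\ref{lemHypTailToGaussianCDF} uniformly over a family of conditional hypergeometrics. That is available, because the cumulant bound of Lemma~\ref{lemHyp_Cum_CGF_MGF_Bound} holds for any hypergeometric with a universal constant. However, the lemma is stated only for $\Hyp(N,n-1,m)$, so you should say you are adapting it rather than citing it. One harmless slip: $\mu=(n-1)p=2m/n$ exactly, not $2m/n-p$.
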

Observe that the property $Q = \{\Delta(G) \geq t^*\}$ is convex: in the sense that if $G_1$ and $G_2$ satisfy $Q$, then so must every $G$ satisfying $G_1 \subseteq G \subseteq G_2.$ By similar reasoning, the property $\{\Delta(G) \leq k_\alpha\}$ is also convex. Therefore, by criterion (ii) of Theorem 2.2 in \cite{bollobas2001random}, it suffices to prove the following result for the \ER~$G(n,p)$ model.

\begin{lemma} \label{lem:max_deg_gnp}
Let $G \sim G(n,p)$ and suppose that $(n-1)p(1-p) \gg (\log n)^3$. For fixed $\alpha>0$ let $t^*$ be defined as in \eqref{eqThresholdDefn}. Then
\[
\PP\big(\Delta \ge t^*\big)=o(1)\quad\text{if }\alpha>1,
\qquad\text{and}\qquad
\PP\big(\Delta \ge t^*\big)=1-o(1)\quad\text{if }\,0<\alpha<1.
\]
\end{lemma}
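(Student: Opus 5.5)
The plan is the classical first/second moment argument for the maximum degree of $G(n,p)$, in the style of Bollob\'as, with the threshold written as $t^* = \mu' + s\sqrt{2\log n - \alpha^{-1}\log\log n}$. Here $\mu' = (n-1)p$ is essentially $2m/n$, and $s^2 = (n-1)p(1-p)$. In $G(n,p)$ each degree is exactly $\Bin(n-1,p)$, so the main tool is a moderate-deviation estimate for the binomial tail at height $x_n := (t^*-\mu')/s \sim \sqrt{2\log n}$. Since $s^2 \gg (\log n)^3$, we have $x_n^3/s = o(1)$, so the cubic correction in the Cram\'er expansion vanishes. The local/tail estimate is therefore $\PP(\Bin(n-1,p) \ge \mu' + x s) = (1+o(1))\,\Phi^c(x)$ uniformly for $x = O(\sqrt{\log n})$; this is the binomial analogue of Lemma \ref{lemHypTailToGaussianCDF}. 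The small discrepancy between $(n-1)p$ and $2m/n$, and rounding $t^*$ to an integer, shift $x_n$ by $O(1/s) = o(1/\sqrt{\log n})$. This is negligible after multiplication by $x_n$.

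\textbf{Case $\alpha>1$.} By Mills' ratio,
\[
\Phi^c(x_n) \sim \frac{e^{-x_n^2/2}}{x_n\sqrt{2\pi}} = \frac{(\log n)^{1/(2\alpha)}}{n\sqrt{4\pi\log n}}(1+o(1)).
\]
A union bound then gives
\[
\PP(\Delta\ge t^*) \le n\,\Phi^c(x_n)(1+o(1)) = O\big((\log n)^{1/(2\alpha)-1/2}\big) = o(1).
\]

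\textbf{Case $\alpha<1$.} Let $X = \sum_i \1\{d_i \ge t^*\}$. The same computation gives $\E X \asymp (\log n)^{1/(2\alpha)-1/2} \to \infty$. It then suffices to show $\E[X^2] = (1+o(1))(\E X)^2$ and apply Chebyshev's inequality. For a pair $i \neq j$, condition on the edge indicator $A_{ij}$; given it, $d_i - A_{ij}$ and $d_j - A_{ij}$ are independent $\Bin(n-2,p)$ variables. Hence
\[
\PP(d_i\ge t^*, d_j\ge t^*) \le \PP(\Bin(n-2,p)\ge t^*-1)^2 .
\]
Shifting the threshold by one unit changes $x_n$ by $1/s$, so the tail ratio changes by a factor of at most $\exp(O(x_n/s)) = 1+o(1)$, again using $s \gg (\log n)^{3/2}$. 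Replacing $n-1$ by $n-2$ is handled in the same way. Thus the pair probability is $(1+o(1))\PP(d_1\ge t^*)^2$, and
\[
\E[X^2] \le \E X + n^2(1+o(1))\PP(d_1\ge t^*)^2 = (1+o(1))(\E X)^2 .
\]

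The main obstacle is the uniform moderate-deviation estimate for the binomial with relative error $1+o(1)$, together with stability of the ratio under shifts of $t^*$ by $O(1)$. The correction terms $\log\log n$ live at the $1+o(1)$ scale of the tail probability, so crude Chernoff bounds lose too much. I would first try the standard local limit/Cram\'er series: for $x = o(s^{1/3})$, $\PP(\Bin \ge \mu'+xs) = \Phi^c(x)\exp(O(x^3/s))(1+o(1))$. The hypothesis $s^2 \gg (\log n)^3$ is exactly what kills the $O(x^3/s)$ term. Finally, the transfer back to $G(n,m)$ (Lemma \ref{thm:max_deg_gnm}) is by the convexity remark already made, with $p = m/N$.
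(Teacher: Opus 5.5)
Your proposal is correct and follows essentially the same route as the paper. Both arguments use a binomial moderate-deviation (DeMoivre--Laplace/Cram\'er) tail estimate, valid because $x_n = o(s^{1/3})$. Both then apply Markov/union bound for $\alpha>1$ and the second moment method for $\alpha<1$, conditioning on the shared edge $\{i,j\}$ and absorbing the unit threshold shift. Your accounting of that shift as a factor $\exp(O(x_n/s))=1+o(1)$ is the right quantity to control, and is slightly more careful than the paper's remark that the standardized level moves by $O(\sigma_n^{-1})=o(1)$.
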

An advantage of the $G(n,p)$ model is that we are able to apply the following DeMoivre-Laplace theorem, which approximates the Binomial tail by a Gaussian.
\begin{lemma}[DeMoivre–Laplace (see e.g.~\cite{raab1998balls} Theorem 2)] \label{lem:dem_laplace} Suppose that $p = p_n \in [0,1]$ satisfies  $n p(1 - p)  \to \infty$ as  $n \to \infty$. If $0 < h = x (np (1-p) )^{1/2} = o((np(1-p))^{2/3})$ and $x \to \infty$, then
\[
\PP[\Bin(n, p) \geq n p + h] = \left(1 + o(1)\right) \cdot \frac{1}{x \sqrt{2 \pi}} \exp\left(-\frac{x^2}{2}\right).
\]
\end{lemma}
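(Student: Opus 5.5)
The statement to prove is the DeMoivre–Laplace moderate-deviation tail: for $h = x\sigma_n$ with $\sigma_n^2 = np(1-p)\to\infty$, $x\to\infty$ and $h = o(\sigma_n^{4/3})$,
\[
\PP[\Bin(n,p)\ge np+h] = (1+o(1))\frac{1}{x\sqrt{2\pi}}e^{-x^2/2}.
\]
Equivalently $x = o(\sigma_n^{1/3})$. The plan is a local limit theorem combined with summation over the tail.

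\textbf{Step 1: local estimate.} For integers $j = np+y$ with $|y| \le 2h$, apply Stirling's formula to $\binom{n}{j}p^j(1-p)^{n-j}$. This gives
\[
\PP[\Bin(n,p)=j] = \frac{1+o(1)}{\sqrt{2\pi}\,\sigma_n}\exp\bigl(-nH(j/n\,\|\,p)\bigr),
\]
where $H$ is the relative entropy. The $o(1)$ is uniform because $y/\sigma_n^2 = O(x/\sigma_n) \to 0$. Taylor-expand the entropy around $p$:
\[
nH = \frac{y^2}{2\sigma_n^2} + O\!\left(\frac{|y|^3}{\sigma_n^4}\right).
\]
The cubic error is $O(x^3/\sigma_n) = o(1)$ exactly when $x = o(\sigma_n^{1/3})$. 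So on this range the point masses match the Gaussian density $\sigma_n^{-1}\phi(y/\sigma_n)$ up to a factor $1+o(1)$, uniformly. The cubic remainder is the main obstacle, since it is precisely where the hypothesis $h = o(\sigma_n^{4/3})$ is used. One should note that the coefficient of the cubic term involves $(1-2p)$ and $1/(p(1-p))$, and that these combine into $\sigma_n^{-4}$ scaling, which keeps the estimate valid uniformly in $p$.

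\textbf{Step 2: summation over the window $[np+h,\,np+2h]$.} Summing the local estimate gives a Riemann sum with step $1/\sigma_n \to 0$:
\[
(1+o(1))\int_x^{2x}\phi(u)\,du .
\]
The Gaussian density varies on scale $1/x$ near $u = x$. Since $1/x \gg 1/\sigma_n$ (because $x \ll \sigma_n$), the Riemann sum approximation is valid. By Mills' ratio, this integral equals $(1+o(1))\,\phi(x)/x$.

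\textbf{Step 3: the remote tail $j > np+2h$.} Here the point masses decrease geometrically; alternatively, use a Chernoff bound. Either way,
\[
\PP[\Bin(n,p) \ge np+2h] \le \exp\bigl(-(2x)^2/2\cdot(1+o(1))\bigr).
\]
This is justified in the moderate range via $H \ge$ the quadratic term minus the cubic correction, or monotonicity of $H$. Since $e^{-2x^2} = o\bigl(\phi(x)/x\bigr)$, this contribution is negligible and the proof is complete.
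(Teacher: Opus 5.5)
Your proof is correct. The paper gives no proof of this lemma; it imports it as the classical DeMoivre--Laplace moderate-deviation estimate via Raab--Steger. What you wrote is essentially the classical Feller-style proof behind that citation, in three steps:
\begin{itemize}
\item \emph{Local limit theorem.} A Stirling estimate whose only nontrivial error is the Lagrange remainder $O(|y|^3/\sigma_n^4)=O(x^3/\sigma_n)$. This is uniform on the window because $|y|/(np)$ and $|y|/(n(1-p))$ are at most $|y|/\sigma_n^2\to 0$, which also keeps $j$ and $n-j$ large enough for Stirling.
\item \emph{Riemann sum.} Each cell has ratio $e^{O(x/\sigma_n)}$. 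With $\phi$ the standard Gaussian density, the lattice offset at the left endpoint costs only $O(\phi(x)/\sigma_n)=o(\phi(x)/x)$.
\item \emph{Remote tail.} A Chernoff bound at $np+2h$, where your own expansion gives exponent $2x^2+o(1)$, so no separate monotonicity argument is needed.
\end{itemize}

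The closest thing the paper actually proves is Lemma~\ref{lemHypTailToGaussianCDF}. There the analogous hypergeometric tail comes from cumulant bounds $|\kappa_r(Y)|\le C_0^r r!/\sigma^{r-2}$ fed into the Rudzkis--Saulis--Statulevi\v{c}ius theorem. Since a binomial is a sum of Bernoullis, the estimate \eqref{eq:case1} with $\rho=p$ would give the present lemma by that route too, in the same range $x=o(\sigma_n^{1/3})$.

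The two routes buy different things. The cumulant route is a black box that handles hypergeometric and exponentially tilted laws uniformly, without needing an explicit product formula for the mass function. Yours is elementary and also gives pointwise local asymptotics. That makes the paper's later claim immediate that shifting the threshold by $1$ changes the tail only by a factor $1+o(1)$, since a single atom has mass $O(\phi(x)/\sigma_n)=o(\phi(x)/x)$.
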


\begin{proof}[Proof of Lemma~\ref{lem:max_deg_gnp}] Define $
x_{n,\alpha}^2 = 2\log n - \frac{1}{\alpha}\log\log n$ and set $t_{n,\alpha}
=
(n-1)p \;+\; \sqrt{(n-1)pq}\,x_{n,\alpha}$, where $q = 1-p$. Our proof will proceed via an application of the second moment method, much like that of \cite{raab1998balls}.
For each $v\in[n]$, we have $d_v\sim\Bin(n-1,p)$. Set
\[
\mu_n:=(n-1)p,\qquad \sigma_n^2:=(n-1)pq,
\]
so that $t_{n,\alpha}=\mu_n+x_{n,\alpha}\sigma_n$. Define
\[
X_{n,\alpha}:=\sum_{v=1}^n \mathbf{1}\{d_v \ge t_{n,\alpha}\}.
\]
Then $\{\Delta \ge t_{n,\alpha}\}=\{X_{n,\alpha}\ge 1\}$. Let $n':=n-1$. We will apply Lemma~\ref{lem:dem_laplace} with $h:=x\sqrt{n'pq}$.
The condition $h=o((n'pq)^{2/3})$ is equivalent to $x=o((n'pq)^{1/6})$,
so 
\begin{equation}\label{eq:EX}
\E[X_{n,\alpha}]
=
n\,\PP(d_1\ge t_{n,\alpha})
=
(1+o(1))\,n\cdot \frac{1}{x_{n,\alpha}\sqrt{2\pi}}e^{-x_{n,\alpha}^2/2}.
\end{equation}
Taking logs gives
\begin{align}
\log \E[X_{n,\alpha}]
&=
\log n - \frac{x_{n,\alpha}^2}{2} - \log x_{n,\alpha} - \frac12\log(2\pi) + o(1)\notag\\
&=
\log n - \Big(\log n - \frac{1}{2\alpha}\log\log n\Big)
      - \log x_{n,\alpha} - \frac12\log(2\pi) + o(1)\notag\\
&=
\frac{1}{2\alpha}\log\log n - \log x_{n,\alpha} + O(1) + o(1).\label{eq:logEX-general}
\end{align}
Since $x_{n,\alpha}^2 = 2\log n\,(1+o(1))$, we have $\log x_{n,\alpha}=\frac12\log\log n+O(1)$, and therefore
\begin{equation}\label{eq:EX-asymp}
\E[X_{n,\alpha}]
=
(\log n)^{\frac{1}{2\alpha}-\frac12+o(1)}.
\end{equation}
If $\alpha>1$, then $\E[X_{n,\alpha}]=o(1)$, and Markov's inequality gives
\[
\PP(\Delta\ge t_{n,\alpha})=\PP(X_{n,\alpha}\ge 1)\le \E[X_{n,\alpha}]=o(1).
\]
If $0<\alpha<1$, then $\E[X_{n,\alpha}]\to\infty$. Thus, if we can show that
\begin{equation}\label{eq:second-moment-goal}
\E[X_{n,\alpha}^2] = (1+o(1))\,\E[X_{n,\alpha}]^2,
\end{equation}
then the theorem follows from the second moment method. 
Write $I_v:=\mathbf{1}\{d_v\ge t_{n,\alpha}\}$. Then $X_{n,\alpha}=\sum_{v=1}^n I_v$ and
\[
\E[X_{n,\alpha}^2]=\E[X_{n,\alpha}] + n(n-1)\E[I_1I_2].
\]
Condition on the edge $\{1,2\}$. Let $Z:=\mathbf{1}\{\{1,2\}\in E\}\sim\Bern(p)$ and
let $B\sim\Bin(n-2,p)$. Conditional on $Z$, the variables $d_1-Z$ and $d_2-Z$ are independent
and each distributed as $B$, hence
\[
\E[I_1 I_2]
=
p\,\PP(B\ge t_{n,\alpha}-1)^2 + (1-p)\,\PP(B\ge t_{n,\alpha})^2.
\]
Since $x_{n,\alpha}\to\infty$ and $x_{n,\alpha}=o(((n-2)pq)^{1/6})$, Lemma~\ref{lem:dem_laplace}
applies uniformly at levels $x_{n,\alpha}+o(1)$. Because shifting the threshold by $1$ changes the standardized level by
$O(\sigma_n^{-1})=o(1)$, we have
\[
\PP(B\ge t_{n,\alpha}-1)=(1+o(1))\PP(d_1\ge t_{n,\alpha}),
\qquad
\PP(B\ge t_{n,\alpha})=(1+o(1))\PP(d_1\ge t_{n,\alpha}).
\]
Therefore $\E[I_1 I_2]=(1+o(1))\PP(d_1\ge t_{n,\alpha})^2$, and hence
\[
\E[X_{n,\alpha}^2]
=
\E[X_{n,\alpha}] + (1+o(1))\,n(n-1)\PP(d_1\ge t_{n,\alpha})^2
=
\E[X_{n,\alpha}] + (1+o(1))\,\E[X_{n,\alpha}]^2
\]
as desired.
\end{proof}

\begin{proof}[Proof of Lemma \ref{lem:maxdeg_short}]
The first claim follows directly from Lemma \ref{thm:max_deg_gnm}. For the second claim, write $v_*$ for the planted center and let
$H$ denote the subgraph induced by $[n]\setminus\{v_*\}$. By construction of the planted model, $H$ contains at most $m - k$ edges. Hence, for every increasing event $\mathcal E$
depending only on $H$, we have the stochastic domination bound
\[
\P_1\big(\mathcal E(H)\big)
\;\le\;
\P\big(\mathcal E(G(n-1,m))\big).
\]
Moreover, for each $u\neq v_*$ we can decompose
\[
d_u \;=\; d_u(H) + \mathbf 1\{(u,v_*)\in E\},
\]
so in particular $d_u \le d_u(H)+1$ and therefore
\[
\max_{u\neq v_*} d_u
\;\le\;
\Delta(H)+1.
\]
It follows that
\begin{align*}
\P_1\big(\Delta \ge t^*,\ d_{v_*}< t^*\big)
&\le
\P_1\big(\exists\,u\neq v_*:\ d_u\ge t^*\big) \\
&\le
\P_1\big(\Delta(H)\ge t^*-1\big) \\
&\le
\P\big(\Delta(G(n-1,m))\ge t^*-1\big)
\;=\; o(1),
\end{align*}
where the last step uses Lemma \ref{thm:max_deg_gnm} (applied with $n-1$ in place of $n$),
together with the fact that replacing $t^*$ by $t^*-1$ does not change the $o(1)$
conclusion. Finally, since $\{d_{v_*}\ge t^*\}\subseteq \{\Delta\ge t^*\}$, we have
\[
\P_1(\Delta\ge t^*)
=
\P_1(d_{v_*}\ge t^*)
+
\P_1(\Delta\ge t^*,\ d_{v_*}< t^*)
=
\P_1(d_{v_*}\ge t^*)+o(1).
\qedhere \]
\end{proof}

 \section*{Acknowledgments}
WP supported in part by  NSF grant CCF-2309708.

\bibliography{ref}
\bibliographystyle{abbrv}

\appendix

\section{Further properties of the Hypergeometric distribution}\label{secApproxHypergeom}

In this Appendix we collect relevant technical results about the Hypergeometric distribution.

\subsection{Strongly Rayleigh property}
\label{secHypergeomStronglyRayleigh}

For a random variable $Y$, define its probability generating function (PGF) to be $\lambda \mapsto \EE[\lambda^Y]$ for $\lambda \in \CC$. We say that $Y$ has a \emph{strongly Rayleigh} distribution (c.f.~\cite{BorceaBrandenLiggett2009Negative} Defintions 2.9 and 2.10) if its PGF is \emph{real stable} i.e.~it has real coefficients and for $\Im(\lambda) > 0$, $\EE[\lambda^Y] \neq 0$.

\begin{fact}[Essentially in \cite{BorceaBrandenLiggett2009Negative}]\label{fact}
    The Hypergeometric distribution is strongly Rayleigh.
\end{fact}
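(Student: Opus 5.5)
The plan is to realize $\Hyp(N,K,n)$ as the law of $|S\cap A|$, where $S$ is a uniformly random $n$-subset of $[N]$ and $A\subseteq[N]$ is a fixed set with $|A|=K$. Then we invoke two closure properties of strongly Rayleigh measures from \cite{BorceaBrandenLiggett2009Negative}. The first step is to note that the uniform measure on $n$-subsets of $[N]$ is strongly Rayleigh. Its multivariate generating polynomial is, up to a positive constant, the elementary symmetric polynomial
\[
e_n(z_1,\dots,z_N)=\sum_{|S|=n}\prod_{i\in S} z_i .
\]
This polynomial is real stable: $\prod_{i=1}^N (1+ z_i w)$ is stable in $(z,w)$, since it is a product of stable affine factors with nonnegative coefficients. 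Extracting the coefficient of $w^n$ preserves stability (a standard operation from Borcea--Br\"and\'en). Alternatively, $e_n$ is the basis generating polynomial of the uniform matroid, which is a known stable example.

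The second step is specialization. We set $z_i=\lambda$ for $i\in A$ and $z_i=1$ for $i\notin A$, and then normalize by $\binom{N}{n}$. The result is exactly the PGF $\EE[\lambda^{Y}]$ with $Y\sim\Hyp(N,K,n)$. Setting variables to the real value $1$ preserves real stability, because a limit of stable polynomials that is not identically zero is stable by Hurwitz's theorem; equivalently, specializing to real values is a listed closure operation. Diagonalizing several variables into a single $\lambda$ also preserves stability. If $\Im\lambda>0$, then the point $(\lambda,\dots,\lambda)$ lies in the product of open upper half-planes, so the polynomial cannot vanish there. The specialized polynomial is not identically zero because its value at $\lambda=1$ equals $1$. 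The result is a univariate polynomial with real (indeed nonnegative) coefficients and no zeros in the upper half-plane. This is the definition of strongly Rayleigh given above.

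As a sanity check, and as an alternative route, I would note the univariate consequence: a univariate real polynomial is stable iff all its roots are real. So one could instead show directly that $\sum_j \binom{K}{j}\binom{N-K}{n-j}\lambda^j$ has only real roots. This is classical, since it is a Jacobi-type polynomial, or it follows from the interlacing of $(1+\lambda w)^K(1+w)^{N-K}$ coefficients. The main obstacle is only bookkeeping: the paper's definition is phrased univariately, so I would state precisely which closure operations are used (specializing variables to real values, and diagonalizing variables). These are both in \cite{BorceaBrandenLiggett2009Negative}, Section 4, so the proof would be short and essentially a citation, matching the ``essentially in'' attribution.
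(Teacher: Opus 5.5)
Your overall route is the same as the paper's. You write the PGF as a specialization of $e_n$, the generating polynomial of the uniform $n$-subset measure, and get stability of $e_n$ by extracting a coefficient from a product polynomial. Then you set the variables outside $A$ to $1$ and diagonalize the variables in $A$. Your treatment of the specialization step is fine: you use Hurwitz's theorem and check that the result is not identically zero because it equals $1$ at $\lambda=1$. This is slightly more explicit than the paper's bare citation of \cite[Proposition 3.1]{BorceaBrandenLiggett2009Negative}.

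There is, however, a false claim in your first step: $\prod_{i=1}^N(1+z_iw)$ is \emph{not} real stable in $(z,w)$. Also, its factors are bilinear, not affine. Take $z_1=w=i$, both in the open upper half-plane; then $1+z_1w=1+i^2=0$. So the coefficient-extraction argument as you wrote it does not establish stability of $e_n$.

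The fix is to put the auxiliary variable in additively, exactly as the paper does with $F(t,z)$. The polynomial $\prod_{i=1}^N(w+z_i)$ is stable, because each factor has positive imaginary part whenever all variables do. Its coefficient of $w^{N-n}$ is $e_n(z)$, namely $\frac{1}{(N-n)!}\partial_w^{N-n}\prod_i(w+z_i)\big|_{w=0}$. This is stable by closure under differentiation and under specialization to real values.

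Your fallback via the uniform matroid's basis generating polynomial is correct and carries the step on its own, so the proof survives. Still, the primary justification should be replaced. The same correction applies to your sanity check. Instead of $(1+\lambda w)^K(1+w)^{N-K}$, use $(\lambda+w)^K(1+w)^{N-K}$. This is stable in $(\lambda,w)$, and its coefficient of $w^{N-n}$ is $\sum_j\binom{K}{j}\binom{N-K}{n-j}\lambda^j$.
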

\begin{proof}
    Consider the uniform distribution over subsets of size $m$ of $[N]$, i.e.~$\PP[\bX = \1_{S}] = \binom{N}{m}^{-1}$ for any $S \subset [N]$ with $\abs{S} = m$, where $\bX = (X_1,\dots,X_N$) is a collection of $\{0,1 \}$-valued random variables with $X_i$ indicating inclusion of the $i$-th element. Its PGF is
    \[
    g_{\bX}(z_1,\dots,z_N) = \frac{1}{\binom{N}{m}} e_m(z_1,\dots,z_N),
    \]
    where $e_m(z_1,\dots,z_N) = \sum_{\substack{S \subseteq [N] \\ \abs{S} = m}} \prod_{i \in S} z_i$ is the $m$-th elementary symmetric polynomial. Define $F(t,z_1,\dots,z_N) = \prod_{i=1}^{N}(t+z_i) = \sum_{m=0}^{N} e_m(z_1,\dots,z_N) t^{N-m}$. Note that $F(t,z_1,\dots,z_N)$ is real stable. On the other hand $e_m(z_1,\dots,z_N) = (N-m)!\partial_t^{N-m} F(t,z_1,\dots,z_N) \rvert_{t = 0}$. By \cite[Proposition 3.1]{BorceaBrandenLiggett2009Negative}, real stability is closed under taking derivatives and specializing to real constants, thus $e_m$ inherits the real stability property from $F(t,z_1,\dots,z_N)$.

    Suppose without loss of generality that the first $K$ indicators $X_1,\dots,X_K$ are marked as desirable objects. Then $Y = \sum_{i=1}^{K} X_i \sim \Hyp(N,K,m)$ (see Definition \ref{defHypergeometric}). Let $g_Y(\lambda) = \EE \lambda^Y$ denote the PGF of $Y$. We have the relation
    \[
    g_Y(\lambda) = g_{\bX}(\underbrace{\lambda,\dots,\lambda}_{K \text{ times}}, 1,\dots,1).
    \]
    By \cite[Proposition 3.1, Definition 2.8]{BorceaBrandenLiggett2009Negative}, since $g_Y$ is the projection and diagonalization of $g_{\bX}$ onto the first $K$-coordinates, it is a real stable polynomial.
\end{proof}
A univariate PGF is real stable if and only if it has all real roots. Coupled with the fact that the Hypergeometric PGF has nonnegative coefficients, we arrive at the following.
\begin{corollary}\label{corrHypStrictlyNegativeRoots}
    Let $Y \sim \Hyp(N,K,m)$ where $0 < K < N$ and $m > 0$. Then the PGF of $Y$ has strictly negative roots.
\end{corollary}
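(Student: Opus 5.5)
By Fact~\ref{fact}, the PGF $g_Y(\lambda)=\EE[\lambda^Y]$ of $Y\sim\Hyp(N,K,m)$ is a real stable univariate polynomial. Its coefficients $\PP[Y=j]$ are real, so this is the same as saying all its roots are real. Concretely, a non-real root $\lambda_0$ would have its conjugate $\bar\lambda_0$ as a root too. One of $\lambda_0,\bar\lambda_0$ has positive imaginary part, which contradicts $g_Y(\lambda)\neq 0$ for $\Im\lambda>0$. So it remains to show that every real root is strictly negative.

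\textbf{Positive reals and zero.} For real $\lambda>0$ we have $g_Y(\lambda)=\sum_j \PP[Y=j]\lambda^j>0$, since all coefficients are nonnegative and at least one is positive. Hence there are no roots in $(0,\infty)$.

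At $\lambda=0$, first note that the PGF is a polynomial of degree $\min(K,m)$ whose lowest-order nonzero coefficient is $\PP[Y=j_{\min}]$, where $j_{\min}=\max(0,m-(N-K))$. If $j_{\min}>0$, then $0$ is a root of multiplicity $j_{\min}$. So the statement must be read as concerning the nontrivial roots, namely those of $g_Y(\lambda)/\lambda^{j_{\min}}$. Alternatively it holds literally when $m\le N-K$, which is the regime used in the paper ($K=n-1$ or $2n-4$, and $m\ll N$). In that regime $g_Y(0)=\PP[Y=0]=\binom{N-K}{m}/\binom{N}{m}>0$, so $0$ is not a root. With the convention that factors of $\lambda$ are removed, the same argument applies to the reduced polynomial, whose constant term $\PP[Y=j_{\min}]$ is positive.

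\textbf{Conclusion.} Every root is real and lies neither in $[0,\infty)$ nor at $0$ (after removing trivial factors), so all roots are strictly negative. The hypotheses $0<K<N$ and $m>0$ ensure that $Y$ is nondegenerate, so the PGF is a nonconstant polynomial and the statement is not vacuous.

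The only delicate point is the treatment of $\lambda=0$. If needed, I would state the corollary under the assumption $m\le N-K$, or phrase it as: $g_Y(\lambda)=\lambda^{j_{\min}}\prod_i(\lambda+r_i)/C$ with all $r_i>0$. This factored form is what is later used to control cumulants, since $\log g_Y(e^{s})$ then becomes a sum of logarithms of Bernoulli-type MGFs.
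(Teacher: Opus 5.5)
Your proof is correct and follows the same route as the paper: real stability (Fact~\ref{fact}) forces all roots to be real, and nonnegative coefficients rule out positive roots. Your treatment of $\lambda=0$ is also right and fills a gap in the paper's one-line argument, which only yields roots $\le 0$. Zero is a root of multiplicity $\max(0,m-(N-K))$, so the corollary as literally stated needs $m\le N-K$, which holds in the paper's applications since $m\ll N$; otherwise it must be read as a statement about the nonzero roots.
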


\subsection{Gaussian approximation for Hypergeometric}

In this section we derive some relationships between the Hypergeometric tails and those of the Gaussian. We begin with a bound on cumulants which leads to asymptotics of the moment and cumulant generating functions. For any random variable $Y$, we denote by $\kappa_r(Y)$ its $r^{th}$ cumulant.
\begin{lemma}\label{lemHyp_Cum_CGF_MGF_Bound}
Let $d_x \sim \Hyp(N,n-1,m)$ be the degree of vertex $x$ in $G(n,m)$ and set $Y := \frac{d_x - \mu}{\sigma}$, where $\mu$ and $\sigma$ are the mean and variance of $d_x$ given in \eqref{eqMuSigDef}. 
Then there exists a universal constant $C_0 > 0$ such that
\begin{equation}\label{eqHypCumulantBound}
    \abs{\kappa_r(Y)} \leq \frac{C_0^r r!}{\sigma^{r-2}}, \qquad \text{for all $r \geq 3$}.
\end{equation}
Furthermore, for $|t|=o(\sigma^{1/3})$ we have $\log \E[e^{tY}]=\frac{t^2}{2}+o(1)$ and $\E[e^{tY}]=(1+o(1))e^{t^2/2}$.
\end{lemma}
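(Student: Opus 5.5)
We derive the cumulant bound \eqref{eqHypCumulantBound} from the real-rootedness in Corollary~\ref{corrHypStrictlyNegativeRoots}. By that corollary, the PGF of $d_x$ factors as $\E[\lambda^{d_x}] = \prod_{j=1}^{D}\frac{\lambda + r_j}{1+r_j}$ with all $r_j>0$, where $D=\min(n-1,m)$ is the degree of the polynomial. Consequently $d_x \overset{d}{=} \sum_{j=1}^{D} B_j$ for independent Bernoulli variables $B_j\sim\Bern(p_j)$, with $p_j = 1/(1+r_j)\in(0,1)$. Cumulants are additive over independent summands, so $\kappa_r(d_x)=\sum_j \kappa_r(B_j)$, and in particular $\sigma^2=\sum_j p_j(1-p_j)$.

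The key input is a uniform bound on Bernoulli cumulants of the form $|\kappa_r(\Bern(q))| \le C_0^r r!\, q(1-q)$ for all $r\ge 2$. To prove it, write the cumulant generating function as $K(s)=\log(1-q+qe^s)$. Then $K'(s)=\frac{qe^s}{1-q+qe^s}$ is a logistic function of $s+\log\frac{q}{1-q}$, and $\kappa_r = K^{(r)}(0)$. The higher derivatives of the logistic function $\ell$ are polynomials in $\ell$ that carry a factor $\ell(1-\ell)$, with coefficients bounded by $C^r r!$; these can be controlled via Eulerian numbers, or alternatively by Cauchy's estimate on a disc of radius $\sim 1$ around $0$, where $\ell$ is analytic because its poles lie at $\Im s=\pm\pi$. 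Extracting the factor $q(1-q)$ is the delicate part. The cleanest route is to note that $K''(s)=\ell(1-\ell)$ and to apply Cauchy's estimate to $K''$ on a disc of fixed radius. There, $|\ell(1-\ell)|\lesssim q(1-q)$ holds uniformly in $q$ as long as the radius is small enough that $|1-q+qe^s|$ stays bounded below by a constant multiple of $\max(q,1-q)$. Summing over $j$ then gives $|\kappa_r(d_x)|\le C_0^r r!\,\sigma^2$. Since $\kappa_r(Y)=\kappa_r(d_x)/\sigma^r$ for $r\ge 2$ (the shift by $\mu$ affects only $\kappa_1$), we obtain \eqref{eqHypCumulantBound}.

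For the MGF asymptotics, we expand $\log\E[e^{tY}] = \sum_{r\ge1}\kappa_r(Y)t^r/r!$. Here $\kappa_1(Y)=0$ and $\kappa_2(Y)=1$, and the series converges for $|t|<\sigma/C_0$ by the cumulant bound; this is valid because the summed CGF is analytic on that disc. The remainder satisfies
\[
\Big|\sum_{r\ge3}\kappa_r(Y)\frac{t^r}{r!}\Big| \le \sigma^2\sum_{r\ge 3}\Big(\frac{C_0|t|}{\sigma}\Big)^r = O\Big(\frac{|t|^3}{\sigma}\Big),
\]
which is $o(1)$ when $|t|=o(\sigma^{1/3})$. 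Exponentiating gives $\E[e^{tY}]=(1+o(1))e^{t^2/2}$.

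The main obstacle we expect is the uniform Bernoulli cumulant bound carrying the factor $q(1-q)$ with constant $C_0^r r!$, independent of how extreme $q$ is. The plan is to handle it by Cauchy estimates on $K''$, choosing the disc radius small enough (say $1/2$) that $|e^s-1|\le 1$. This ensures the denominator $1+q(e^s-1)$ is bounded away from zero. The case $q$ close to $1$ follows by the symmetry $B\mapsto 1-B$, which changes cumulants only by a sign.
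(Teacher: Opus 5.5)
Your proposal is correct, and it shares the paper's skeleton. Real-rootedness of the hypergeometric PGF gives the factorization $\prod_j (1+a_jz)/(1+a_j)$, so $d_x$ is a sum of independent Bernoullis; the paper writes the per-factor CGF as $\phi_a(t)=\log(1+\rho(e^t-1))$ without naming it. Cumulants add, each factor's $r$-th cumulant is bounded by $C^r r!\,\rho(1-\rho)$, summing gives $C^r r!\,\sigma^2$, and the Taylor remainder of the CGF is $O(|t|^3/\sigma)$.

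Where you differ is the per-factor bound. The paper expands $\phi_a$ in Stirling numbers, $\phi_a^{(r)}(0)=\sum_{q\le r}(-1)^{q+1}\rho^q(q-1)!\,S(r,q)$, and bounds this by $\rho A_r$ with $A_r=\sum_q (q-1)!\,S(r,q)$. It then controls $A_r$ through the exponential generating function $-\log(2-e^z)$, which has radius $\log 2$, together with Cauchy's estimate. You instead apply Cauchy's estimate directly to $K''(s)=q(1-q)e^s/(1+q(e^s-1))^2$, which carries the variance factor explicitly and needs no combinatorial identities.

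This buys you something real. The paper passes from $\rho A_r$ to $2\rho(1-\rho)A_r$, which is valid only for $\rho\le 1/2$, and it never addresses factors with $\rho>1/2$. Your symmetry $B\mapsto 1-B$ covers that case. Even more simply, on $|s|\le 1/2$ one has $|e^s-1|\le e^{1/2}-1<0.65$, so $|1+q(e^s-1)|\ge 0.35$ uniformly in $q\in[0,1]$ and no symmetry is needed.

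The paper's route does give explicit coefficients and a constant tied to $\log 2$. For the lemma as stated, though, your argument is shorter and more complete.
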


\begin{proof}
Our first step is establish the following bound for the cumulants:
\begin{equation}\label{eqHypCumulantGoalInStirling}
|\kappa_r(Y)|
\leq \frac{2}{\sigma^{r-2}} \sum_{q=1}^{r} (q-1)!\,S(r,q)
\end{equation}
where $S(r,q)$ denotes Stirling numbers of the second kind. To this end, note that the Hypergeometric probability generating function (PGF) for the degree $d_x$ is
\begin{equation}\label{eq:deg-hyp-pgf}
g(z):=\E[z^{d_x}]
=\sum_{d=0}^{n-1}\P(d_x=d)\,z^{d}
=\frac{1}{\binom{N}{m}}
\sum_{d=0}^{n-1}
\binom{n-1}{d}\binom{N-(n-1)}{m-d}\,z^{d}.
\end{equation}
By Corollary \ref{corrHypStrictlyNegativeRoots}, all the zeroes of $g$ are real and strictly negative.
Thus, with $\{\zeta_j\}$ denoting these negative real roots, we have the factorization
\begin{align}\label{eq:P-root-factor}
g(z)&=C\prod_{j=1}^{m}(z-\zeta_j) = \prod_{j=1}^{m}\frac{z-\zeta_j}{1-\zeta_j},
\end{align}
where $
C :=\prod_{j=1}^{m}(1-\zeta_j)^{-1}$ which is deduced from $g(1)=1$. Furthermore, since $\zeta_j<0$, by setting $a_j:=-1/\zeta_j>0$, we have
\[
\frac{z-\zeta_j}{1-\zeta_j}
=\frac{1+a_j z}{1+a_j},
\]
and \eqref{eq:P-root-factor} becomes
\begin{equation}\label{eq:P-az-factor}
g(z)=\prod_{j=1}^{m}\frac{1+a_j z}{1+a_j}.
\end{equation}
Define the moment and cumulant generating functions $M(t):=\E[e^{td_x}]$ and
$K(t):=\log M(t)$ respectively. Since $d_x$ is integer-valued, we have the relations
\[
M(t)=g(e^t),\qquad K(t)=\log g(e^t),\qquad\text{and}\qquad \kappa_r(d_x)=K^{(r)}(0).
\]
Plugging \eqref{eq:P-az-factor} into $K(t)$ yields
\begin{equation}\label{eq:K-sum-phi}
K(t)=\log g(e^t)
=\sum_{j=1}^{m}\Big(\log(1+a_j e^t)-\log(1+a_j)\Big)
=\sum_{j=1}^{m}\phi_{a_j}(t),
\end{equation}
where $
\phi_a(t):=\log(1+a e^t)-\log(1+a).$
For $a>0$, define
\[
\rho:=\frac{a}{1+a}\in(0,1),
\]
so that $
\phi_a(t)=\log\big(1 + \rho(e^{t} - 1)\big).$ We have the expansions
\[
\log(1+u)=\sum_{q\ge1}(-1)^{q+1}\frac{u^q}{q}, \quad\text{and}\quad (e^t-1)^q=q!\sum_{\ell\ge q} S(\ell,q)\frac{t^\ell}{\ell!},
\]
where $S(\ell,q)$ is the Stirling number of the second kind. It follows that
\begin{align*}
\phi_a(t)
&=\sum_{q\ge 1}(-1)^{q+1}\frac{\rho^q}{q}
\left(q!\sum_{\ell\ge q} S(\ell,q)\frac{t^\ell}{\ell!}\right) =\sum_{q\ge 1}(-1)^{q+1}\rho^q (q-1)!
\sum_{\ell\ge q} S(\ell,q)\frac{t^\ell}{\ell!}.
\end{align*}
Now, fix an integer $r\ge 3$. Reading off the coefficient of $t^r/r!$, we obtain the formula
\begin{equation}\label{eq:phi-deriv-stirling}
\phi_a^{(r)}(0)=\sum_{q=1}^{r}(-1)^{q+1}\rho^q (q-1)!\,S(r,q).
\end{equation}
Now, observe that $\phi_a''(0) = \rho(1-\rho)$. So, we can write
\begin{align}\label{eq:case1}
|\phi_a^{(r)}(0)|
&\le \rho \sum_{q=1}^{r} (q-1)!\,S(r,q) \le 2\,\rho(1-\rho)\sum_{q=1}^{r} (q-1)!\,S(r,q) = 2\phi_a''(0)\sum_{q=1}^{r} (q-1)!\,S(r,q)
\end{align}
Putting everything together,
\begin{align}
|\kappa_r(d_x)|
\le \sum_{j=1}^{m}|\phi_{a_j}^{(r)}(0)| \le 2\Big(\sum_{q=1}^{r} (q-1)!\,S(r,q)\Big)\sum_{j=1}^{m}\phi_{a_j}''(0)
= 2\sigma^2\Big(\sum_{q=1}^{r} (q-1)!\,S(r,q)\Big) 
\end{align}
Because $Y=(d_x-\mu)/\sigma$, we have $\kappa_r(Y)=\kappa_r(d_x)/\sigma^r$, so the desired \eqref{eqHypCumulantGoalInStirling} follows for all $r \geq 3$.

Next consider the cumulant generating function $K_Y(t)$ of $Y$:
\begin{equation}\label{eq:split-series}
K_Y(t)=\frac{t^2}{2} + \sum_{r\ge 3}\kappa_r(Y)\frac{t^r}{r!}.
\end{equation}
For notational convenience, define
\[
A_r:=\sum_{q=1}^r (q-1)!\,S(r,q) 
\]
so that \eqref{eqHypCumulantGoalInStirling} is $\abs{\kappa_r(Y)} \leq 2 A_r / \sigma^{r-2}$. Consider the exponential generating function
\[
F(z):=\sum_{r\ge 1} A_r \frac{z^r}{r!}.
\]
Using the identity $(e^z-1)^q=q!\sum_{r\ge q} S(r,q)\,z^r/r!$ and swapping sums,
\begin{align*}
F(z)
&=\sum_{r\ge 1}\sum_{q=1}^r (q-1)!\,S(r,q)\,\frac{z^r}{r!}
=\sum_{q\ge 1}(q-1)!\sum_{r\ge q}S(r,q)\frac{z^r}{r!} \\
&=\sum_{q\ge 1}(q-1)!\cdot \frac{(e^z-1)^q}{q!}
=\sum_{q\ge 1}\frac{(e^z-1)^q}{q}
=-\log\!\bigl(1-(e^z-1)\bigr) \\
&=-\log\!\bigl(2-e^z\bigr).
\end{align*}
The function $F(z)=-\log(2-e^z)$ is analytic on the disk $|z|<\log 2$, since the nearest
singularity occurs at  $z=\log 2$ and its complex translates.
Therefore the radius of convergence of $F$ is $R=\log 2$. Thus, fix any $\rho\in(0,\log 2)$ and set $M_\rho:=\sup_{|z|=\rho}|F(z)|<\infty$, where
$F(z)=\sum_{r\ge 1}A_r z^r/r!=-\log(2-e^z)$. By Cauchy's estimate,
\[
\frac{A_r}{r!}\le \frac{M_\rho}{\rho^r}\qquad\text{for all }r\ge 1,
\]
which proves the asserted bound on cumulants. Furthermore,
\begin{align*}
\Big|K_Y(t)-\frac{t^2}{2}\Big|
&\le |t|^2\sum_{r\ge 3}\frac{2A_r}{r!}\Big(\frac{|t|}{\sigma}\Big)^{r-2} 
\le 2M_\rho |t|^2 \sum_{r\ge 3}\rho^{-r}\Big(\frac{|t|}{\sigma}\Big)^{r-2} \\
&= 2M_\rho |t|^2 \rho^{-2}\sum_{k\ge 1}\Big(\frac{|t|}{\rho\sigma}\Big)^k.
\end{align*}
Since $|t|=o(\sigma^\frac{1}{3})$, we have $\frac{|t|}{\rho\sigma}<1$ and so
\begin{align}
\Big|K_Y(t)-\frac{t^2}{2}\Big|
\le
\frac{2M_\rho}{\rho^{3}}\,
\frac{|t|^{3}/\sigma}{1-\frac{|t|}{\rho\sigma}} = o(1)
\end{align}
Exponentiating gives
\[
M_Y(t)=\E[e^{tY}]
=\exp\!\Big(\frac{t^2}{2}+o(1)\Big)
=\exp\!\Big(\frac{t^2}{2}\Big)\,(1+o(1)).
\]
This finishes the proof.
\end{proof}
\begin{lemma}\label{lemHypTailToGaussianCDF}
    Let $d_x$ denote the degree of some arbitrary vertex $x$ in $\PP_0 = G(n,m)$. Define the standardized version $Y = (d_x-\mu)/\sigma$ and call its distribution $\QQ$. For any $a = o(\sigma^{1/3})$, define the tilted distribution $\QQ^{(a)}$ which has density $y \mapsto e^{ay}/\EE[e^{aY}]$ with respect to $\QQ$. Then for any $t = \Theta(a)$,
    \begin{align*}
        \QQ^{(a)}[Y \geq t] = (1+o(1)) \Phi^c(t - a).
    \end{align*}
\end{lemma}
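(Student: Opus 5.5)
Our plan is to reduce the statement to a moderate-deviation result for a sum of independent Bernoulli variables, using the root factorization from the proof of Lemma~\ref{lemHyp_Cum_CGF_MGF_Bound}. By Corollary~\ref{corrHypStrictlyNegativeRoots} and \eqref{eq:P-az-factor}, $d_x$ has the law of $\sum_{j=1}^m \xi_j$ with independent $\xi_j\sim\Bern(\rho_j)$, where $\rho_j = a_j/(1+a_j)$. Exponential tilting preserves this structure. Under $\QQ^{(a)}$, with $\lambda=a/\sigma$, the variable $d_x$ is again a sum of independent Bernoullis, now with parameters $\rho_j(\lambda)=\rho_j e^{\lambda}/(1-\rho_j+\rho_j e^{\lambda})$. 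Its mean and variance are $K'(\lambda)$ and $K''(\lambda)$, where $K$ is the cumulant generating function of $d_x$.

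The first step is to locate and scale the tilted law. Expanding the cumulants and using the bound \eqref{eqHypCumulantBound}, the tilted mean of $Y$ is
\[
\sigma^{-1}K'(\lambda)-\mu/\sigma \;=\; a+\sum_{r\ge3}\kappa_r(Y)\frac{a^{r-1}}{(r-1)!} \;=\; a+O(a^2/\sigma).
\]
Likewise the tilted variance of $Y$ is $1+O(a/\sigma)$. Since $a=o(\sigma^{1/3})$, both errors are $o(1)$ after the relevant rescaling. Write $W=(Y-m_a)/s_a$ for the standardized tilted variable, where $m_a$ and $s_a$ are the tilted mean and standard deviation. Then
\[
\QQ^{(a)}[Y\ge t]=\PP[W\ge x],\qquad x=\frac{t-m_a}{s_a}=(t-a)+O\!\left(\frac{a^2}{\sigma}\right)+O\!\left(\frac{|t-a|\,a}{\sigma}\right).
\]

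The second step treats the case where $x$ stays bounded, that is $t-a=O(1)$, which covers the critical uses $t_n-a_n\to0$ in the paper. Here a Berry--Esseen bound for independent Bernoullis suffices: the third absolute moments give an error $O(1/s_a\sigma)=O(1/\sigma)$. Since $\Phi^c(x)$ is bounded below in this range and $x=(t-a)+o(1)$, we obtain $\PP[W\ge x]=\Phi^c(t-a)+o(1)=(1+o(1))\Phi^c(t-a)$.

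The third step handles $|t-a|\to\infty$, which is the main obstacle, since a multiplicative $(1+o(1))$ requires a moderate-deviation estimate. Note that $|t-a|=O(a)=o(\sigma^{1/3})$.
\begin{itemize}
\item For $t-a\to+\infty$, we would apply Cram\'er's moderate deviation theorem for triangular arrays of independent bounded summands. Its cumulant bounds are exactly those supplied by \eqref{eqHypCumulantBound}, which carry over to the tilted Bernoulli sum by the same Stirling-number argument applied to the $\rho_j(\lambda)$. This gives $\PP[W\ge x]=\Phi^c(x)\exp\bigl(O(x^3/\sigma)\bigr)(1+o(1))$, and $x^3/\sigma=o(1)$. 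The discrepancy between $x$ and $t-a$ matters only through $x\,(x-(t-a))$, which is $O(a^3/\sigma)=o(1)$, so $\Phi^c(x)=(1+o(1))\Phi^c(t-a)$.
\item For $t-a\to-\infty$, both sides tend to $1$, and the claim follows from Chebyshev's inequality under $\QQ^{(a)}$.
\end{itemize}
The delicate points are verifying the uniformity of the moderate-deviation theorem in the tilted array and controlling the lattice effect. For the latter, the mesh $1/\sigma$ contributes a relative error $O(x/\sigma)=o(1)$. If citing Cram\'er proves awkward, the fallback is to re-tilt to the level $x$ and use a local limit theorem for Bernoulli sums, following the Bahadur--Rao argument.
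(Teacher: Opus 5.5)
Your proposal is correct, but its main step differs from the paper's. After Lemma~\ref{lemHyp_Cum_CGF_MGF_Bound}, the paper does not use the product-of-Bernoullis structure again. It transfers the cumulant bound \eqref{eqHypCumulantBound} to the standardized tilted variable $Z_a=(Y-m_a)/s_a$, uniformly in $|a|\le c\sigma$. This gives a Statulevi\v{c}ius-type condition with parameter of order $\sigma$. It then invokes the Rudzkis--Saulis--Statulevi\v{c}ius lemma, which gives $\QQ^{(a)}[Y\ge t]=(1+o(1))\exp\bigl(O((t-a)^3/\sigma)\bigr)\Phi^c\bigl((t-m_a)/s_a\bigr)$ for all $t=o(\sigma)$ at once, with no case analysis.

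You instead use the fact that exponential tilting keeps $d_x$ a sum of independent Bernoullis with parameters $\rho_j(\lambda)$. That lets you apply classical independent-sum tools: Berry--Esseen when $t-a$ is bounded, the Cram\'er--Feller moderate deviation theorem when $t-a\to+\infty$, and Chebyshev when $t-a\to-\infty$.

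What your route buys is reliance on more standard theorems. For bounded summands, Feller's theorem needs only $|\xi_j|\le 1$ and tilted variance $\sigma^2 s_a^2\to\infty$, so you do not need to re-derive cumulant bounds for the $\rho_j(\lambda)$ at all. The price is the three-way split. You should make it exhaustive with a subsequence argument, since $t-a$ need not converge in $[-\infty,\infty]$.

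Your write-up also explicitly checks that $x\bigl(x-(t-a)\bigr)=O(a^3/\sigma)=o(1)$. The paper's final line is terse here: it only records $m_a=a+o(1)$ and $s_a=1+o(1)$. Those alone would not give a multiplicative $(1+o(1))$ when $t-a\to\infty$. What is really needed is the quantitative errors $O(a^2/\sigma)$ and $O(a/\sigma)$ combined with $a=o(\sigma^{1/3})$, exactly as you use them.
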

\begin{proof}
    Let $\psi(a) = \log \EE[e^{aY}]$. Note from the cumulant bound \eqref{eqHypCumulantBound} that $\psi$ is analytic on $\abs{a} \leq c\sigma$ for some constant $c > 0$. Let $m_a$ and $s^2_a$ be the mean and variance of $Y \sim \QQ^{(a)}$ and we have, uniformly in $\abs{a} \leq c\sigma$,
    \[
    m_a = \psi'(a) = a + O\!\inparen{\frac{a^2}{\sigma}}, \qquad s_a^2 = \psi''(a) = 1 + O\!\inparen{ \frac{a}{\sigma} }.
    \]
    Define $Z_a = \frac{Y-m_a}{s_a}$. The cumulant bounds in \eqref{eqHypCumulantBound} transfer over to $Z_a$ so that uniformly in $\abs{a} \leq c\sigma$, we have the bounds $\abs{\kappa_r(Z_a)} \leq (r-2)!/(C\sigma)^{r-2}$ for some constant $C > 0$. Theorem 2.1 from \cite{DoringJansenSchubert2022Survey}, which originates from \cite{RudzkisSaulisStatuljavicus1978}, yields for all $t = o(\sigma)$,
    \[
    \QQ^{(a)}[Y \geq t] = (1+o(1)) \exp\inparen{O\!\inparen{\frac{(t-a)^3}{\sigma}}} \Phi^c\!\inparen{ \frac{t - m_a}{s_a}  }.
    \]
    The result follows since $m_a = a + o(1)$, $s_a = 1 + o(1)$ and $t-a = o(\sigma^{1/3})$.
\end{proof}

\subsection{Near-independence of degrees in \texorpdfstring{$G(n,m)$}{G(n,m)}}

In this section, we will formalize the extent to which the degree random variables in $G(n,m)$ are \emph{almost} independent. In particular, we will show that the moment generating functions of the degrees are asymptotically unchanged even after conditioning on some other vertex. Throughout, all probabilities $\PP$ and expectations $\EE$ are taken with respect to the null model $\PP_0$.

\begin{lemma}
\label{lem:mgf-stab-d2}
Let $G\sim \PP_0 = G(n,m)$, let $N:=\binom{n}{2}$, and assume $m\le (1-\delta)N$ for some fixed
$\delta\in(0,1)$. Let $\mu:=\E[d_1]=2m/n$ and let $\lambda=\lambda_n$ satisfy $|\lambda|\to 0$.
Then uniformly over all integers $k \in [n-1]$, we have
\[
\E\!\left[e^{\lambda(d_1-\mu)}\,\middle|\, d_2=k\right]
=\E\!\left[e^{\lambda(d_1-\mu)}\right]\cdot (1+o(1)).
\]
\end{lemma}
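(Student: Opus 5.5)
The plan is to make the conditioning on $d_2=k$ explicit, so that $d_1$ becomes an explicit mixture of shifted Hypergeometric variables, and then to compare cumulant generating functions. Let $A_{12}=\1\{\{1,2\}\in G\}$. Given $d_2=k$, we have $\PP(A_{12}=1\mid d_2=k)=k/(n-1)$. Given further $A_{12}=b$, the $k-b$ other edges at vertex $2$ form a uniform subset of $\{\{2,j\}:j\ge 3\}$. The remaining $m-k$ edges are then a uniform subset of the $N-(n-1)$ pairs not incident to $2$. Since the pairs $\{1,j\}$ with $j\ge 3$ are among these, we get
\[
d_1 = b + X,\qquad X\sim \Hyp\bigl(N-n+1,\;n-2,\;m-k\bigr),
\]
and the law of $X$ does not depend on $b$. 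Hence
\[
\E\!\left[e^{\lambda(d_1-\mu)}\,\middle|\,d_2=k\right]=\Bigl(1-\tfrac{k}{n-1}+\tfrac{k}{n-1}e^{\lambda}\Bigr)\,\E\!\left[e^{\lambda(X-\mu)}\right].
\]
Because $|\lambda|\to 0$, the prefactor is $1+o(1)$ uniformly in $k$. The lemma therefore reduces to showing $K_X(\lambda)-K_{d_1}(\lambda)=o(1)$ uniformly in $k\in[n-1]$, where $K_X$ and $K_{d_1}$ are the cumulant generating functions of $X-\mu$ and $d_1-\mu$, with $d_1\sim\Hyp(N,n-1,m)$.

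Both variables are Hypergeometric, so I would use the root factorisation from the proof of Lemma~\ref{lemHyp_Cum_CGF_MGF_Bound} (via Corollary~\ref{corrHypStrictlyNegativeRoots}) for each. This gives $K(\lambda)=\lambda(\text{mean}-\mu)+\tfrac{\lambda^2}{2}\Var+R(\lambda)$ with $|R(\lambda)|\le C|\lambda|^3\Var$. The mean term is explicit:
\[
\frac{(n-2)(m-k)}{N-n+1}-\frac{(n-1)m}{N}= -\frac{(n-2)k}{N-n+1}+O\!\left(\frac{m}{N}\right)=O(1)
\]
uniformly in $k$, so it contributes $O(|\lambda|)=o(1)$. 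For the variances, the explicit Hypergeometric formula gives $|\Var X-\Var d_1| = O\bigl(\mu(k/m+1/n)\bigr)=O(1+\mu/n)$, using $m\le(1-\delta)N$ to keep the $(1-p)$ factors comparable. This contributes $O(\lambda^2(1+\mu/n))=o(1)$.

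I expect the main obstacle to be the cubic remainders. Bounding them crudely by $|\lambda|^3\sigma^2$ each only gives $o(1)$ when $|\lambda|=o(\sigma^{-2/3})$. That covers the applications in the paper, where $\lambda=a/\sigma\asymp\sqrt{\log n}/\sigma$, but it is not literally implied by $|\lambda|\to0$ alone. To obtain the full statement I would first write
\[
K_X(\lambda)-K_{d_1}(\lambda)=\int_0^\lambda\bigl(K_X'(s)-K_{d_1}'(s)\bigr)\,ds
\]
and compare the tilted means directly. Under the product form $g(z)=\prod_j(1+a_jz)/(1+a_j)$, the tilted mean is $\sum_j a_je^s/(1+a_je^s)$. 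The aim would be to show that the tilted means of the two Hypergeometric laws differ by $O(1+|s|\mu/n+|s|\mu k/m)$ for $|s|\le|\lambda|$. One route is a monotone coupling: sample the two edge sets sequentially so that $d_1-A_{12}$ and $X$ differ by a variable $D$ with bounded exponential moments, and handle the dependence of $D$ on $X$ by Hölder's inequality with exponents close to $1$. If this refinement fails, I would fall back to stating and using the lemma under the hypothesis $|\lambda|=o(\sigma^{-2/3})$, which suffices for every invocation in the proof of Proposition~\ref{propZphasetransition}.
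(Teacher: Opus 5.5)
Your conditioning step matches the paper's and is correct: given $d_2=k$, $d_1=A_{12}+X$, with $A_{12}$ independent of $X\sim\Hyp(N-n+1,n-2,m-k)$. Your mean and variance comparisons are also right. The gap is the one you flag yourself. Bounding the two cubic remainders separately by $C|\lambda|^3\sigma^2$ proves the lemma only under the extra hypothesis $|\lambda|=o(\sigma^{-2/3})$, not under $|\lambda|\to0$ as stated. That weaker version would cover every use in the paper, where $\lambda\in\{a_n/\sigma,2a_n/\sigma\}$ and $|\lambda|^3\sigma^2\asymp(\log n)^2/k\to0$, but it is not the lemma.

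Your proposed repair is stated only as an aim, and the H\"older step would not deliver it. Bounding $\E[e^{\lambda(X+D)}]\le\E[e^{p\lambda X}]^{1/p}\E[e^{q\lambda D}]^{1/q}$ inflates the first factor by roughly $e^{(p-1)\lambda^2\sigma^2/2}$. Keeping that $1+o(1)$ forces $p-1\ll(\lambda\sigma)^{-2}$, hence $q|\lambda|\gg|\lambda|^3\sigma^2$. The factor $\E[e^{q\lambda D}]^{1/q}$ then cannot stay $1+o(1)$ once $|\lambda|^3\sigma^2$ is large compared with $\log(1/|\lambda|)$. So H\"older at best stretches the range slightly and does not reach arbitrary $|\lambda|\to0$.

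The missing idea is the paper's one-step argument, which avoids cumulant expansions entirely. Let $X_s\sim\Hyp(N-n+1,n-2,s)$ and $M_0(s):=\E[e^{\lambda X_s}]$, and expose the draws one at a time. Whatever the history, the next draw is an edge $\{1,j\}$ with $j\ge3$ with conditional probability in $[0,C/n]$; this uses $m\le(1-\delta)N$. Hence $M_0(s)/M_0(s-1)$ lies between $1$ and $1+(e^\lambda-1)C/n$, i.e.\ $|\log M_0(s)-\log M_0(s-1)|=O(|\lambda|/n)$.

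The prefactor $1+(e^\lambda-1)k/(n-1)$ is log-Lipschitz in $k$ with the same increment. Telescoping over at most $n$ steps then gives $|\log F(k)-\log F(k')|=O(|\lambda|)=o(1)$ uniformly in $k,k'$, where $F(k):=\E[e^{\lambda(d_1-\mu)}\mid d_2=k]$.

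Finally, do not compare $X$ with the unconditional $\Hyp(N,n-1,m)$ law, which has a different population and success count. Instead note that $\E[e^{\lambda(d_1-\mu)}]=\sum_k\PP(d_2=k)F(k)$ is sandwiched between $\min_kF$ and $\max_kF$. This needs only $|\lambda|\to0$ and no relation between $\lambda$ and $\sigma$. Your coupling idea becomes exactly this argument if you bound the conditional MGF of each extra draw uniformly over the past, rather than decoupling $D$ from $X$ with H\"older.
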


\begin{proof}
Write $B:=\1\{\{1,2\}\in E(G)\}$. Conditional on $d_2=k$, the $k$ incident edges to vertex $2$
form a uniformly random $k$-subset of the $(n-1)$ edges incident to $2$, and the remaining
$m-k$ edges are a uniformly random $(m-k)$-subset of the $N_0:=N-(n-1)$ edges not incident
to $2$; moreover these two random subsets are independent. In particular, conditional on $d_2=k$, we have
\[
\PP(B=1\mid d_2=k)=\frac{k}{n-1},
\qquad
B \perp\!\!\!\perp \Big\{\text{edges not incident to }2\Big\} \, \text{ given }d_2=k.
\]
Let
\[
X_k:=\#\{\{1,j\}\in E(G): j\ge 3\}.
\]
Given $d_2=k$, the random variable $X_k$ counts how many of the $K_0:=n-2$ edges
$\{1,j\}$ with $j\ge 3$ are selected among the $m-k$ edges chosen from the $N_0$ edges not
incident to $2$. Hence
\[
X_k\mid (d_2=k)\ \sim\ \Hyp(N_0,K_0,m-k),
\qquad\text{and}\qquad
X_k \perp\!\!\!\perp B\ \text{ given }d_2=k,
\]
and therefore $d_1=B+X_k$. Thus, for every admissible $k$,
\begin{align}
\label{eq:Fk-def}
\E\!\left[e^{\lambda(d_1-\mu)} \,\middle|\, d_2=k\right]
&= e^{-\lambda\mu}\,
\E\!\left[e^{\lambda B} \,\middle|\, d_2=k\right]\,
\E\!\left[e^{\lambda X_k} \,\middle|\, d_2=k\right] \\
&= e^{-\lambda\mu}\,
\Big(1+(e^\lambda-1)\frac{k}{n-1}\Big)\,M_0(m-k).
\end{align}
where for $s\in\{0,1,\dots,N_0\}$ we define
\[
M_0(s):= \E\!\left[e^{\lambda X_s}\right],
\quad X_s\sim \Hyp(N_0,K_0,s).
\]
We will now show that the right-hand side of \eqref{eq:Fk-def} is uniformly stable in $k$ up to a $(1+o(1))$ factor. Our strategy will be to show stability of $M_0$ and stability of the prefactor. Putting this together will give us $\log$-Lipschitz control of the conditional MGF, demonstrating that changes in $k$ do not affect the MGF of $d_2$ significantly, thus yielding the result.

Thus, we fix $s\ge 1$ and consider sampling without replacement from a population of size $N_0$ with
$K_0$ ``successes''. Expose the $s$ draws sequentially and let $X_t$ be the number of
successes in the first $t$ draws. Then
\[
M_0(s)=\E[e^{\lambda X_s}]
=\E\!\left[e^{\lambda X_{s-1}}\,\E[e^{\lambda \Delta_s}\mid \mathcal F_{s-1}]\right],
\quad \Delta_s:=X_s-X_{s-1}\in\{0,1\},
\]
where $\mathcal F_{s-1}$ is the history up to time $s-1$. Conditionally on $\mathcal F_{s-1}$,
the success probability at step $s$ is
\[
p_s:=\PP(\Delta_s=1\mid \mathcal F_{s-1})
=\frac{K_0-X_{s-1}}{N_0-(s-1)}\le \frac{K_0}{N_0-(s-1)}.
\]
Hence
\[
\E[e^{\lambda \Delta_s}\mid \mathcal F_{s-1}]
=1+(e^\lambda-1)p_s
\in \big[\,1+(e^\lambda-1)\cdot 0,\ \ 1+(e^\lambda-1)\tfrac{K_0}{N_0-(s-1)}\,\big].
\]
Since $m\le (1-\delta)N$ and $N_0=N-(n-1)$, we have $s\le m \le (1-\delta)N\le (1-\delta/2)N_0$
for all large $n$, so $N_0-(s-1)\ge (\delta/2)N_0$. With $K_0=n-2$ and $N_0=\Theta(n^2)$ this gives
\[
\frac{K_0}{N_0-(s-1)}\le \frac{C}{n}
\quad\text{for a constant }C=C(\delta).
\]
Therefore, for all large $n$ and all $1\le s\le m$,
\[
\Big|\log\frac{M_0(s)}{M_0(s-1)}\Big|
\le \Big|\log\Big(1+(e^\lambda-1)\frac{C}{n}\Big)\Big|
\le \frac{C'}{n}|e^\lambda-1|
\le \frac{C''}{n}|\lambda|,
\]
where we used $|\lambda|\to 0$ so $|e^\lambda-1|\le 2|\lambda|$ eventually. Thus
\begin{equation}
\label{eq:M0-step}
\sup_{1\le s\le m}\Big|\log M_0(s)-\log M_0(s-1)\Big|\ \le\ C_1\frac{|\lambda|}{n}.
\end{equation}
For the prefactor $1+(e^\lambda-1)\frac{k}{n-1}$, define $A(k):=1+(e^\lambda-1)\frac{k}{n-1}$. Then for all $k$,
\[
\Big|\log\frac{A(k+1)}{A(k)}\Big|
=\Big|\log\Big(1+\frac{e^\lambda-1}{(n-1)A(k)}\Big)\Big|
\le \frac{|e^\lambda-1|}{n-1}
\le C_2\frac{|\lambda|}{n}.
\]
Hence
\begin{equation}
\label{eq:A-step}
\sup_{0\le k\le n-2}\Big|\log A(k+1)-\log A(k)\Big|\ \le\ C_2\frac{|\lambda|}{n}.
\end{equation}
Now, denote $
F(k):=\E\!\left[e^{\lambda(d_1-\mu)}\mid d_2=k\right].
$
By \eqref{eq:Fk-def}, $\log F(k)= -\lambda\mu + \log A(k) + \log M_0(m-k)$.
Using \eqref{eq:M0-step} and \eqref{eq:A-step}, we have that uniformly in $k$,
\begin{align}
&\big|\log F(k+1)-\log F(k)\big| \nonumber
\\
&\leq \big|\log A(k+1)-\log A(k)\big| + \big|\log M_0(m-k-1)-\log M_0(m-k)\big| 
\le C_3\frac{|\lambda|}{n}
\end{align}
Summing this bound over at most $n$ steps yields
\[
\sup_{k, k' \in [n-1]}\Big|\log\frac{F(k)}{F(k')}\Big|
\le C_3|\lambda|=o(1),
\]
and thus
\begin{equation}
\label{eq:F-uniform}
\sup_{k \in [n-1]}\left|\frac{F(k)}{F(k_0)}-1\right|=o(1)
\end{equation}
We finish by noting that
\[
\E\!\left[e^{\lambda(d_1-\mu)}\right]=\sum_{k}\PP(d_2=k)\,F(k),
\]
so $\E[e^{\lambda(d_1-\mu)}]$ lies between $\min_k F(k)$ and $\max_k F(k)$ over admissible $k$.
By \eqref{eq:F-uniform}, $\max_k F(k)=(1+o(1))\min_k F(k)$, and therefore 
\[
F(k)=\E\!\left[e^{\lambda(d_1-\mu)}\right]\cdot (1+o(1))
\]
as desired. \end{proof}
\begin{corollary}
\label{cor:cond-trunc-allk}
Let $G\sim \PP_0 = G(n,m)$, let $N:=\binom{n}{2}$, and assume $m\le (1-\delta)N$ for some fixed
$\delta\in(0,1)$. Let $\mu:=\E[d_1]=2m/n$ and let $\lambda=\lambda_n$ satisfy $|\lambda|\to 0$.
Then, uniformly for every $k\in\{0,1,\dots,n-1\}$,
\begin{equation}\label{eq:cond-trunc-allk}
\E\!\left[e^{a_n Y_2}\mathbf 1_{\{Y_2\ge t_n\}}\ \middle|\ d_1=k\right]
=
(1+o(1))\,
\E\!\left[e^{a_n Y_2}\mathbf 1_{\{Y_2\ge t_n\}}\right].
\end{equation}
\end{corollary}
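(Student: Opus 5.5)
Write $\lambda = a_n/\sigma$ and $\tau := \mu + \sigma t_n$. In the regime of interest, $\lambda \sim \sqrt{\log n/(2c)}\,/\sigma \to 0$, so the hypothesis $|\lambda|\to0$ holds. The plan is to reuse the exact conditional decomposition from the proof of Lemma~\ref{lem:mgf-stab-d2}, with the roles of vertices $1$ and $2$ swapped. Conditionally on $d_1=k$, we have $d_2 = B + X_k$. Here $B\sim\Bern(k/(n-1))$ and $X_k\sim\Hyp(N_0,K_0,m-k)$ are independent, with $N_0 = N-(n-1)$ and $K_0=n-2$. Hence
\[
G(k):=\E\!\left[e^{\lambda(d_2-\mu)}\mathbf 1_{\{d_2\ge \tau\}}\,\middle|\, d_1=k\right]
= e^{-\lambda\mu}\Big[\tfrac{k}{n-1}\,e^{\lambda}\,h_{m-k}(\tau-1) + \big(1-\tfrac{k}{n-1}\big)\,h_{m-k}(\tau)\Big],
\]
where $h_s(u):=\E[e^{\lambda X}\mathbf 1_{\{X\ge u\}}]$ for $X\sim\Hyp(N_0,K_0,s)$. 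As in Lemma~\ref{lem:mgf-stab-d2}, it then suffices to show that $\log G(k)$ is uniformly Lipschitz in $k$ with step size $O(|\lambda|/n)$, or at least that $\max_k G(k)/\min_k G(k) = 1+o(1)$. The claim then follows because the unconditional truncated moment is the $\PP(d_1=k)$-average of the $G(k)$.

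The first step is a monotone coupling in $s$ for $h_s$. Expose the draws sequentially, exactly as in the proof of Lemma~\ref{lem:mgf-stab-d2}, so that $X_s = X_{s-1}+\Delta_s$ with $\PP(\Delta_s=1\mid\mathcal F_{s-1})\le C/n$. Since the integrand $x\mapsto e^{\lambda x}\mathbf 1_{\{x\ge u\}}$ is nondecreasing (take $\lambda>0$, the relevant case; the other sign is symmetric), we get $h_{s-1}(u)\le h_s(u)$. For the reverse inequality,
\[
h_s(u) \le h_{s-1}(u) + \E\big[\Delta_s\,e^{\lambda(X_{s-1}+1)}\mathbf 1_{\{X_{s-1}\ge u-1\}}\big]\le h_{s-1}(u)+\tfrac{C}{n}e^{\lambda}h_{s-1}(u-1).
\]
The second step, which I expect to be the main obstacle, is to compare $h_{s-1}(u-1)$ with $h_{s-1}(u)$, i.e.\ to show that shifting the threshold by one unit changes the truncated tilted moment only by a factor $1+o(1)$. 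By exponential tilting, $h(u-1)/h(u)=\QQ^{(a)}(Y\ge t_n-1/\sigma)/\QQ^{(a)}(Y\ge t_n)$. Lemma~\ref{lemHypTailToGaussianCDF} gives both tails as $(1+o(1))\Phi^c(t_n-a_n+O(1/\sigma))$. Since $t_n-a_n=O(\sqrt{\log n})$ and $1/\sigma\ll (\log n)^{-1/2}$, the Gaussian ratio $\Phi^c(x-\eta)/\Phi^c(x)$ is $1+O(\eta\,(1+|x|))=1+o(1)$. One caveat is that Lemma~\ref{lemHypTailToGaussianCDF} is stated for the degree law $\Hyp(N,n-1,m)$. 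I would note that its proof uses only the cumulant bound \eqref{eqHypCumulantBound}, which holds for every hypergeometric law via Corollary~\ref{corrHypStrictlyNegativeRoots}, with the mean and variance of $\Hyp(N_0,K_0,s)$ agreeing with $\mu,\sigma^2$ up to shifts of $O(k/n)=o(1)$ and $O(1/\sigma)$-relative errors for $s=m-k$, $k\le n$. The same tail-shift argument absorbs these small recenterings.

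Combining the two steps gives $|\log h_s(u)-\log h_{s-1}(u)|\le C'/n$ uniformly. This is a weaker bound than the $|\lambda|/n$ of Lemma~\ref{lem:mgf-stab-d2}, and summed over $n$ steps it only gives $O(1)$. To fix this, I would sharpen the second step: the increment is $\PP(\Delta_s=1\mid\cdot)$ times a difference, and $p_s$ is itself nearly constant, $p_s = K_0/N_0\,(1+O(X/K_0 + s/N_0))$. The true variation in $s$ therefore comes only from the drift of the mean, which is $K_0/N_0\cdot 1 \approx 2/n$ per step in units of $d$, i.e.\ $O(1/(n\sigma))$ in standardized units. This translates into a multiplicative change $1+O((a_n+|t_n-a_n|)/(n\sigma))$ per step. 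Summed over $k\le n$ steps this is $O(\sqrt{\log n}/\sigma)=o(1)$.

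Concretely, I would bypass the stepwise argument and directly compare $\Hyp(N_0,K_0,m-k)$ for all $k$ with the common reference law via Lemma~\ref{lemHypTailToGaussianCDF}. Each $h_{m-k}(\tau)$ equals $(1+o(1))M\,\Phi^c(t_n-a_n+\delta_k)$ with $\delta_k=O(k/(n\sigma)+a_n/\sigma)=o((\log n)^{-1/2})$. The prefactor mixing in $B$ is handled by the same one-unit threshold shift, together with $e^{\lambda}=1+o(1)$. This yields $G(k)=(1+o(1))$ times the unconditional value, uniformly in $k$.
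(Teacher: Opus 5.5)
Your final argument is correct, but it takes a different route from the paper's.

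\textbf{The paper's route.} It keeps the telescoping scheme of Lemma~\ref{lem:mgf-stab-d2}. It shows $|\log H(s,r)-\log H(s-1,r)|\le C|\lambda|/n+o(1/n)$ uniformly in $s$. The $o(1/n)$ is a boundary term proportional to $\PP(X=r-1)$, which Lemma~\ref{lemHypTailToGaussianCDF} makes negligible relative to the truncated moment. It then sums over at most $n$ unit changes of $k$.

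\textbf{Your first two steps were one line from this.} The exact increment is
$h_s(u)-h_{s-1}(u)=\E[p_s\{(e^\lambda-1)e^{\lambda X_{s-1}}\mathbf 1_{\{X_{s-1}\ge u\}}+e^{\lambda u}\mathbf 1_{\{X_{s-1}=u-1\}}\}]\le \frac{C}{n}[(e^\lambda-1)h_{s-1}(u)+e^{\lambda}(h_{s-1}(u-1)-h_{s-1}(u))]$.
Your threshold-shift step says exactly that the second term is $o(h_{s-1}(u))$. Your bound via $e^\lambda h_{s-1}(u-1)$ discarded the factor $e^\lambda-1$ on the bulk. That is the only reason it looked like $O(1/n)$ per step, so the heuristic drift paragraph is unnecessary.

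\textbf{Your direct comparison.} You evaluate every $G(k)$ as $(1+o(1))e^{a_n^2/2}\Phi^c(t_n-a_n)$ by applying the tilted moderate-deviation estimate to each $\Hyp(N_0,K_0,m-k)$. This avoids needing an $o(1/n)$ error that is uniform across $n$ summed steps, and it gives an explicit asymptotic. Both routes need the same unstated extension of Lemma~\ref{lemHypTailToGaussianCDF} to the whole family $\Hyp(N_0,K_0,s)$. You correctly justify this via the cumulant bound \eqref{eqHypCumulantBound}, which holds for every hypergeometric law.

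\textbf{Two things to tidy.}
First, the conditioning value $k$ ranges up to $n-1$; it is not the star size. So the mean shift $\mu-\mu_{m-k}=(2k-\mu)/(n-1)$ is $O(1)$, not $o(1)$, where $\mu_{m-k}=2(m-k)/(n-1)$ is the mean of $\Hyp(N_0,K_0,m-k)$. This is harmless: it is $O(1/\sigma)$ in standardized units and contributes only $e^{O(\lambda)}=1+o(1)$ to the MGF.
Second, the common factor you call $M$ should be justified explicitly. For example, $e^{-\lambda\mu}\E[e^{\lambda X}]=\exp(a_n^2/2+o(1))$ uniformly in $s$. This follows from the cumulant bound together with $a_n^2|\sigma_s^2/\sigma^2-1|=O(\log n/\mu+\log n/n)=o(1)$, where $\sigma_s^2$ is the variance of $\Hyp(N_0,K_0,s)$.
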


\begin{proof} We will retrace the proof of Lemma \ref{lem:mgf-stab-d2}, but take care to account for the influence of the indicator. For notation, denote
$F(k):=\E\!\left[e^{\lambda(d_2-\mu)}\mathbf 1_{\{d_2\ge r_n\}}\ \middle|\ d_1=k\right]$, $B:=\1\{\{1,2\}\in E(G)\}$, and $X_k:=\#\{\{2,j\}\in E(G): j\ge 3\}.$ As before, with  $N_0:=N-(n-1)$ and $K_0 := n - 2$,
\[
X_k\mid (d_1=k)\ \sim\ \Hyp(N_0,K_0,m-k),
\qquad\text{and}\qquad
X_k \perp\!\!\!\perp B\ \text{ given }d_1=k,
\]
and therefore $d_2=B+X_k$. Thus,
\begin{align}
F(k)
&= e^{-\lambda\mu}\,
\E\!\left[e^{\lambda(B+X_k)}\mathbf 1_{\{B+X_k\ge r_n\}}\ \middle|\ d_1=k\right] \nonumber\\
&= e^{-\lambda\mu}\Big[
\PP(B=0\mid d_1=k)\,\E\!\left[e^{\lambda X_k}\mathbf 1_{\{X_k\ge r_n\}}\right]
+\PP(B=1\mid d_1=k)\,e^\lambda\,\E\!\left[e^{\lambda X_k}\mathbf 1_{\{X_k\ge r_n-1\}}\right]
\Big] \nonumber\\
&= e^{-\lambda\mu}\Big[
\Big(1-\frac{k}{n-1}\Big)\,H(m-k,r_n)
+\frac{k}{n-1}\,e^\lambda\,H(m-k,r_n-1)
\Big], \label{eq:Fk-trunc}
\end{align}
where for $s\in\{0,1,\dots,N_0\}$ and integer $r$ we write
\[
H(s,r):=\E\!\left[e^{\lambda X}\mathbf 1_{\{X\ge r\}}\right],\qquad X\sim \Hyp(N_0,K_0,s).
\]
As in the proof of Lemma~\ref{lem:mgf-stab-d2}, sequential exposure of the draws implies that
changing the sample size by one changes the logarithm of the (untruncated) MGF by at most
$O(|\lambda|/n)$. With truncation, the same one-step computation gains an extra boundary term
proportional to $\PP(X=r-1)$. By Lemma \ref{lemHypTailToGaussianCDF}, uniformly in $1\le s\le m$ and $r\in\{r_n,r_n-1\}$,
\[
\frac{\PP(X=r-1)}{\PP(X\ge r)}=o(1),
\]
so the boundary contribution is $o(1/n)$ relative to $H(s,r)$. Consequently,
\begin{equation}\label{eq:H-step-short}
\sup_{\substack{1\le s\le m\\ r\in\{r_n,r_n-1\}}}
\big|\log H(s,r)-\log H(s-1,r)\big|
\le C\frac{|\lambda|}{n}+o\!\left(\frac{1}{n}\right).
\end{equation}
As the prefactors $\big(1-\frac{k}{n-1}\big)$ and $\frac{k}{n-1}e^\lambda$ are also log-Lipschitz in $k$
with increment $O(|\lambda|/n)$. Repeating the telescoping argument from Lemma~\ref{lem:mgf-stab-d2} yields
\[
\sup_{k,k'}\Big|\log\frac{F(k)}{F(k')} - 1\Big|=o(1),
\]
finishing the proof.
\end{proof}

The next lemma involves the moment generating function of two vertex degrees $d_1$ and $d_2$ in $G(n,m)$. The proof mirrors the structure of Lemma \ref{lem:mgf-stab-d2}, but with modifications to account for the fact that $d_1 $ and $ d_2$ are slightly correlated.

\begin{lemma} 
\label{lem:mgf-stab-d3}
Let $G\sim \PP_0 = G(n,m)$, let $N:=\binom{n}{2}$, and assume $m\le (1-\delta)N$ for some fixed
$\delta\in(0,1)$. Let $\mu:=2m/n$ and let $\lambda=\lambda_n$ satisfy $|\lambda|\to 0$.
Then uniformly for every $k\in\{0,1,\dots,n-1\}$,
\[
\E\!\left[e^{\lambda(d_1+d_2-2\mu)}\mid d_3=k\right]
=\E\!\left[e^{\lambda(d_1+d_2-2\mu)}\right]\cdot (1+o(1)).
\]
\end{lemma}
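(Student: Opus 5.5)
We follow the proof of Lemma~\ref{lem:mgf-stab-d2}: write the conditional MGF explicitly as a function $F(k)$ of $k$, show that $\log F$ changes by $O(|\lambda|/n)$ when $k$ increases by one, telescope, and finish by averaging over $k$.

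\textbf{Decomposing the conditional law.} Condition on $d_3=k$. The $k$ edges at vertex $3$ form a uniform $k$-subset of the $n-1$ edges incident to $3$. The remaining $m-k$ edges form an independent uniform $(m-k)$-subset of the $N_0=N-(n-1)$ edges not incident to $3$.

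Let $B_1=\1\{\{1,3\}\in E\}$ and $B_2=\1\{\{2,3\}\in E\}$. Given $d_3=k$, the pair $(B_1,B_2)$ is a uniform draw without replacement, so
\[
\E[e^{\lambda(B_1+B_2)}\mid d_3=k]=A(k):=1+2(e^\lambda-1)\tfrac{k}{n-1}+(e^\lambda-1)^2\tfrac{k(k-1)}{(n-1)(n-2)}.
\]
The remaining contribution is $d_1+d_2-B_1-B_2=W$, where $W$ is a function of the edges not incident to $3$. Write $W=2A_{12}+H$, with $A_{12}=\1\{\{1,2\}\in E\}$ and $H$ the number of chosen edges among the $2n-6$ edges from $\{1,2\}$ to $[n]\setminus\{1,2,3\}$. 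Then $W$ is independent of $(B_1,B_2)$ given $d_3=k$. Its law depends on $k$ only through the sample size $s=m-k$, drawn from the population of $N_0$ edges.

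This gives
\[
F(k):=\E[e^{\lambda(d_1+d_2-2\mu)}\mid d_3=k]=e^{-2\lambda\mu}A(k)M(m-k),\qquad M(s):=\E_s[e^{\lambda W}],
\]
where $\E_s$ refers to a uniform $s$-subset of the $N_0$ edges.

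\textbf{One-step stability.} For $A$, every increment is
\[
A(k+1)-A(k)=O\!\left(\frac{|e^\lambda-1|}{n}\right)=O\!\left(\frac{|\lambda|}{n}\right),
\]
and $A(k)\in[e^{-2|\lambda|},e^{2|\lambda|}]$ is bounded away from $0$. Hence $|\log A(k+1)-\log A(k)|\le C|\lambda|/n$.

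For $M$, use sequential exposure as before. Reveal the $s$-th draw given $\mathcal F_{s-1}$. The increment of $W$ is $2$ if the draw is $\{1,2\}$, $1$ if it lies in the $2n-6$ relevant edges, and $0$ otherwise. The conditional probabilities of these outcomes are at most $C/N_0$ and $C n/N_0=O(1/n)$ respectively, because $N_0-(s-1)\ge(\delta/2)N_0$ when $m\le(1-\delta)N$. So the conditional MGF of the increment lies in $[1-O(|\lambda|/n),\,1+O(|\lambda|/n)]$. Using $e^{\lambda}-1$ and $e^{2\lambda}-1$ both $O(|\lambda|)$ and $|\lambda|\to0$, we get
\[
|\log M(s)-\log M(s-1)|\le C|\lambda|/n.
\]
One point needs care: the increment factor multiplies $e^{\lambda W_{s-1}}$ inside the expectation. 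Bounding it pointwise between $1-C|\lambda|/n$ and $1+C|\lambda|/n$ still yields a two-sided ratio bound, exactly as in Lemma~\ref{lem:mgf-stab-d2}.

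\textbf{Telescoping and conclusion.} Combining the two bounds gives $|\log F(k+1)-\log F(k)|\le C|\lambda|/n$ uniformly in $k$. Summing over at most $n$ steps, $\sup_{k,k'}|\log(F(k)/F(k'))|\le C|\lambda|=o(1)$. Since
\[
\E[e^{\lambda(d_1+d_2-2\mu)}]=\sum_k\PP(d_3=k)F(k)
\]
lies between $\min_k F$ and $\max_k F$, every $F(k)$ equals the unconditional MGF times $1+o(1)$.

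The step we expect to be the main obstacle is the correlation between $d_1$ and $d_2$, both through the shared edge $\{1,2\}$ and through the shared vertex $3$. We handle it by not separating $d_1$ from $d_2$. Instead we treat $W$ as a single additive functional of the sample and apply the sequential-exposure bound to increments in $\{0,1,2\}$. The only cost is a rare increment of size $2$, with probability $O(n^{-2})$, which does not affect the $O(|\lambda|/n)$ per-step bound.
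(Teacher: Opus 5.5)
Your proposal is correct and follows the paper's proof essentially step for step. It uses the same conditioning on $d_3=k$, the same split of $d_1+d_2$ into the $(B_{13},B_{23})$ factor (with the explicit two-draw formula) and the weighted count $W_s$ over edges not incident to $3$, the same sequential-exposure bound with increments in $\{0,1,2\}$, and the same telescoping and min/max sandwich. Your two-sided bound $[1-O(|\lambda|/n),\,1+O(|\lambda|/n)]$ on the one-step factor is slightly cleaner than the paper's interval, which implicitly assumes $\lambda\ge 0$.
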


\begin{proof} Let $B_{13}:=\1\{\{1,3\}\in E(G)\}$, $B_{23}:=\1\{\{2,3\}\in E(G)\}$, and $B_{12}:=\1\{\{1,2\}\in E(G)\}$.
Let us condition on $d_3=k$. Then we have the following observations.
\begin{itemize}
\item The $k$ edges incident to vertex $3$ form a uniformly random $k$-subset of the $(n-1)$ possible edges incident to vertex $3$.
\item The remaining $s:=m-k$ edges form a uniformly random $s$-subset of the $N_1:=N-(n-1)$ edges not incident to $3$.
\item These two random subsets are independent.
\end{itemize}
In particular, $(B_{13},B_{23})$ is independent of the edges not incident to $3$ given $d_3=k$. Now,
\[
d_1+d_2 = (B_{13}+B_{23}) + W_s,
\]
where $W_s$ is defined as
\[
W_s:= 2\cdot \1\{\{1,2\}\in S\} \;+\!\!\!\sum_{j\in [n]\setminus\{1,2,3\}}
\Big(\1\{\{1,j\}\in S\}+\1\{\{2,j\}\in S\}\Big).
\]
In other words, among edges in $S$, every edge incident to exactly one of $\{1,2\}$ contributes weight $1$,
while the special edge $\{1,2\}$ contributes weight $2$. Therefore, for each possible $k$,
\begin{equation}
\label{eq:Gk-def}
\E\!\left[e^{\lambda(d_1+d_2-2\mu)}\mid d_3=k\right]
=e^{-2\lambda\mu}\,\E\!\left[e^{\lambda(B_{13}+B_{23})}\mid d_3=k\right]\,
\widetilde M_1(s),
\end{equation}
where $s=m-k$ and
\[
\widetilde M_1(s):=\E\!\left[e^{\lambda W_s}\right]
\quad\text{for }S\text{ a uniform $s$-subset of the $N_1$ edges not incident to $3$}.
\]
We will now bound $\widetilde M_1(s-1)/\widetilde M_1(s)$. Expose the $s$ edges of $S$ sequentially without replacement from the $N_1$-edge population.
At each step, the newly revealed edge has weight $0$, $1$, or $2$ in $W_s$.
Let $\Delta_s\in\{0,1,2\}$ be the weight contribution of the $s$-th chosen edge, and let $\mathcal F_{s-1}$
be the history up to step $s-1$. Then
\[
\widetilde M_1(s)=\E\!\left[e^{\lambda W_{s-1}}\E\!\left[e^{\lambda\Delta_s}\mid \mathcal F_{s-1}\right]\right].
\]
Given $\mathcal F_{s-1}$, the probability that the next edge has nonzero weight is at most
\[
q_s \le \frac{K_1}{N_1-(s-1)},
\qquad
K_1:=\#\{\text{edges (not incident to 3) that touch 1 or 2}\}=2n-5.
\]
This is because among edges not incident to $3$, the edges touching $\{1,2\}$ are the single edge $\{1,2\}$ plus the $2(n-3)$ edges $\{1,j\},\{2,j\}$ for $j\notin\{1,2,3\}$, hence $K_1=1+2(n-3)=2n-5$. Moreover, the worst-case weight is $2$. Hence
\[
\E\!\left[e^{\lambda\Delta_s}\mid \mathcal F_{s-1}\right]
\in \big[\,1+(e^{\lambda}-1)\cdot 0,\ \ 1+(e^{2\lambda}-1)\,q_s\,\big].
\]
As before, $m\le (1-\delta)N$ implies $N_1-(s-1)\ge (\delta/2)N_1$ for large $n$.
Since $K_1=\Theta(n)$ and $N_1=\Theta(n^2)$, we have $q_s\le C/n$ for a constant $C=C(\delta)$.
Therefore, for all large $n$ and all $1\le s\le m$,
\[
\Big|\log\frac{\widetilde M_1(s)}{\widetilde M_1(s-1)}\Big|
\le \Big|\log\Big(1+(e^{2\lambda}-1)\frac{C}{n}\Big)\Big|
\le \frac{C'}{n}|e^{2\lambda}-1|
\le C''\frac{|\lambda|}{n},
\]
As $|\lambda|\to 0$, we have
\begin{equation}
\label{eq:M1-step}
\sup_{1\le s\le m}\Big|\log \widetilde M_1(s)-\log \widetilde M_1(s-1)\Big|
\le C_4\frac{|\lambda|}{n}.
\end{equation}

For the $(B_{13}+B_{23})$ factor, note that given $d_3=k$, the $k$ incident edges to $3$ form a uniform $k$-subset of $(n-1)$ edges,
so $(B_{13},B_{23})$ is a pair of coordinates of a uniform $k$-subset indicator vector.
In particular,
\[
\E[e^{\lambda(B_{13}+B_{23})}\mid d_3=k]
= 1 + (e^\lambda-1)\,\E[B_{13}+B_{23}\mid d_3=k]
+ (e^\lambda-1)^2\,\E[B_{13}B_{23}\mid d_3=k],
\]
and the exact Hypergeometric formulas give
\[
\E[B_{13}+B_{23}\mid d_3=k]=\frac{2k}{n-1},
\qquad
\E[B_{13}B_{23}\mid d_3=k]=\frac{k(k-1)}{(n-1)(n-2)}.
\]
Hence $\E[e^{\lambda(B_{13}+B_{23})}\mid d_3=k]$ is a smooth function of $k/(n-1)$ with Lipschitz
constant $O(|e^\lambda-1|)=O(|\lambda|)$, so incrementing $k$ by $1$ changes its logarithm by at most $C_5|\lambda|/n$ and thus for all $0\le k\le n-2$,
\begin{equation}
\label{eq:Bstep}
\Big|\log \E[e^{\lambda(B_{13}+B_{23})}\mid d_3=k+1]
-\log \E[e^{\lambda(B_{13}+B_{23})}\mid d_3=k]\Big|
\le C_5\frac{|\lambda|}{n}.
\end{equation}
To put things together, denote $
G(k):=\E\!\left[e^{\lambda(d_1+d_2-2\mu)}\mid d_3=k\right].$ By \eqref{eq:Gk-def}, $\log G(k)= -2\lambda\mu + \log \E[e^{\lambda(B_{13}+B_{23})}\mid d_3=k] + \log \widetilde M_1(m-k)$.
Using \eqref{eq:M1-step} and \eqref{eq:Bstep},
\[
|\log G(k+1)-\log G(k)|
\le C_6\frac{|\lambda|}{n}
\]
uniformly and summing over at most $n$ steps yields $
\sup_{k,k' \in [n-1]}\Big|\log\frac{G(k)}{G(k')}\Big|\le C_6|\lambda|=o(1).$
Thus $\max_k G(k)=(1+o(1))\min_k G(k)$, so the result follows.
\end{proof}

\end{document}